\numberwithin{equation}{section}
\numberwithin{figure}{section}
\numberwithin{table}{section}
\let\cite\parencite
\providecommand{\dsig}{\, \mathrm{d}\sigma(x)}
\providecommand{\dx}{\, \mathrm{d}x}
\providecommand{\tria}{\mathcal{T}}
\providecommand{\tr}{\operatorname{tr}}
\def\N{\mathbb{N}}
\def\Lmeanfree{L_0^2}
\def\R{\mathbb{R}}
\def\Ra{\operatorname{Ra}}
\def\tol{\texttt{tol}}
\def\Hsim{H^1_{\sim}(\Omega)}
\def\Hdiv{H^1_{\divergence}(\Omega)}
\definecolor{cobalt}{rgb}{0.0, 0.28, 0.67}
\definecolor{darkcerulean}{rgb}{0.03, 0.27, 0.49}
\def\@settitle{\begin{center}\baselineskip14\p@
		\bfseries\Large\@title\end{center}} 
\def\blx@err@patch#1{}
\begin{document}


\title[Pressure robustness for inhomogeneous Dirichlet conditions]{ Scott--Vogelius element and iterated penalty method for inhomogeneous Dirichlet boundary conditions
}

\author[Eickmann]{Franziska Eickmann}
\address[F.\ Eickmann]{Department of Mathematics, Technische Universit\"at Darmstadt, Dolivostraße 15, 64293 Darmstadt, Germany}
\email{eickmann@mathematik.tu-darmstadt.de}
\author[Scott]{L.\ Ridgway  Scott}
\address[L.~R.~Scott]{The University of Chicago (Emeritus), Chicago, Illinois 60637, United States}
\email{ridg@uchicago.edu}
\author[Tscherpel]{Tabea Tscherpel}
\address[T.\ Tscherpel]{Department of Mathematics, Technische Universit\"at Darmstadt, Dolivostraße 15, 64293 Darmstadt, Germany}
\email{tscherpel@mathematik.tu-darmstadt.de}

\subjclass[2020]{
	65N12,  
	65N15, 
	65N30, 
 	65F10,   
 	76D07,   
 	76M10
}

\keywords{inhomogeneous Dirichlet boundary conditions, quasi-optimality, Fortin operator,  Scott-Vogelius element, 
	exact divergence constraint, 
	pressure-robust, pressure robustness,
	Uzawa algorithm}

\date{\today}

\maketitle

\begin{abstract}We present quasi-optimal a priori error estimates for general mixed finite element methods to approximate solutions of the Stokes problem subject to inhomogeneous Dirichlet boundary conditions. 
For the Scott--Vogelius element this yields pressure-robust 
a priori error estimates. 
Due to the exact divergence constraint, this requires a compatibility condition for the boundary data to hold. 
A key tool is a modified Fortin operator \cite{EickmannTscherpel2025} capable of preserving this compatibility condition.  
Furthermore, we analyse the iterated penalty method, an Uzawa-type algorithm and we show its convergence and asymptotic pressure robustness. 
Numerical experiments support the theory and highlight the importance of the compatibility condition and the appropriate treatment of nearly singular vertices. 
\end{abstract}

\section{Introduction}

Accurate numerical simulation of incompressible fluid flow requires numerical methods that capture the essential features of the underlying fluid equations without introducing artefacts.
An important property in this regard is \emph{pressure robustness}, reflected in the fact that the discretisation error in the velocity is independent of the pressure. 
This property is critical for reliable solutions, particularly in low-viscosity regimes or in situations of low regularity of the pressure. 
In recent years various types of pressure-robust methods for incompressible fluid equations have been developed, see~\cite{John2017} for a review. 
One of them is the Scott--Vogelius mixed finite element method first introduced in~\cite{ScottVogelius1985}. 
Since the pressure space consists of the divergence of the velocity functions, it produces exactly divergence-free velocity approximations. 
This, in turn, leads to pressure-robust {quasi-optimal} error estimates for the Stokes problem, see~\cite[Ch.~12.3]{Brenner2008}. 
Here \emph{quasi-optimal} refers to the fact that the discretisation error is equivalent to the best approximation error in the chosen finite element spaces with the same norms on both sides of the estimate.  
This means that the estimate holds under minimal regularity assumptions. Such an estimate is called \emph{pressure-robust}, if the estimate on the velocity error does not include the pressure best approximation error.  
This stands in contrast to general mixed finite element methods, where quasi-optimality typically holds in a product norm for velocity and pressure. 
While the benefits of pressure-robust methods are fairly well-established, the majority of contributions focus on problems with homogeneous Dirichlet boundary conditions~\cite{Brenner2008, Ainsworth2022, Ainsworth2023}, and mixed boundary conditions, or assume that the inhomogeneous divergence constraint is not discretised, see~\cite{John2017}. 
However, in practical fluid flow simulations, inhomogeneous boundary conditions are highly relevant, for instance in channel flows, driven cavity flows, both with and without convection, i.e., for the Stokes and the Navier--Stokes equations. 

Surprisingly, the interplay between exactly divergence-free finite element methods and inhomogeneous boundary Dirichlet conditions has received limited attention, with~\cite{HeisterRebholzXiao2016} the only exception we are aware of.
For exactly divergence-free finite element methods, imposing inhomogeneous Dirichlet boundary conditions is more delicate, because the discrete boundary data are also required to respect a compatibility condition, in the sense that the mean normal trace vanishes. 
Failure to preserve this constraint  in the discretisation jeopardises the exact divergence constraint.

Thus, boundary data interpolation must be carefully designed, as in works~\cite{HeisterRebholzXiao2016} and \cite[Sec.~5.2]{BernardiGiraultRaviartHechtRiviere2024}.  
This significantly complicates the extension of classical  tools such as Fortin operators. 
For instance, the standard abstract Fortin lemma due to~\cite{Fortin1977}, see also~\cite[Sec.~26.2.3]{Ern2021b}, does not ensure  preservation of this compatibility condition. 

Even for general mixed finite element pairs, only recently the case of inhomogeneous Dirichlet boundary conditions has been considered, see~\cite{BernardiGiraultRaviartHechtRiviere2024,JessbergerKaltenbach2024}. 
For the purpose of well-posedness one may reduce the inhomogeneous Stokes problem to the homogeneous one by using a suitable extension operator. 
However, following this approach to prove quasi-optimal velocity error estimates in Sobolev seminorm gives rise to boundary error terms that cannot be expected to be eliminated easily, and hence may corrupt quasi-optimality.

In this work, we extend quasi-optimal error estimates for general mixed finite element methods to the setting of inhomogeneous Dirichlet boundary conditions. 
More specifically, velocity errors are  bounded in terms of the best approximation of both the velocity and pressure, without incurring boundary data error terms. 
This improves upon the a priori estimates for general mixed finite element pairs in~\cite[Thm.~5.2.4.
]{BernardiGiraultRaviartHechtRiviere2024}. Therein the authors present a similar a priori error estimate in the full Sobolev norm but with constrained best approximation error. 

A key ingredient in our proof is a Fortin operator preserving both the divergence in the dual of the discrete pressure space, as well as discrete traces, and zero mean normal traces. 
Such Fortin operators are constructed in~\cite{EickmannTscherpel2025} for a range of mixed finite element pairs. In most cases this requires a modification of the homogeneous Fortin operator. 
Also the numerical analysis in~\cite{JessbergerKaltenbach2024} is based on the assumption that a Fortin operator with similar properties exists.
Note that the Fortin operator serves as a theoretical tool and need not be implemented. 
In contrast, an interpolation operator that preserves zero-mean normal traces is required in the implementation due to the exact divergence constraints.
As a direct consequence of the existence of such a Fortin operator we obtain a pressure-robust error estimate for the Scott--Vogelius method applied to the Stokes problem subject to inhomogeneous Dirichlet boundary conditions. A local Fortin operator for the Scott--Vogelius element has been constructed in \cite{ParkerSüli2025} and \cite{EickmannGuzmanetal2025}.

Besides inf-sup stability a major challenge of the Scott--Vogelius element is the local characterisation of the pressure space. In 2D for homogeneous Dirichlet boundary conditions this has been established in~\cite{GuzmanScott2019}, and subsequently in~\cite{Ainsworth2022} for mixed boundary conditions, see Tab.~4 therein. 
In 2D the pressure space is known to satisfy linear constraints at singular vertices~\cite{Brenner2008, GuzmanScott2019}, which complicate the implementation.
Determining the {correct} pressure space for the Scott--Vogelius discretisation of the Stokes problem with inhomogeneous Dirichlet data depends on the objective. If only well-posedness (inf--sup stability) is needed, one may choose it as divergence of the homogeneous velocity space; if, however, an exact divergence constraint is required, then the pressure space should contain the divergence of all discrete velocity functions. 
In this work, we require both representations of the pressure space, which we therefore assume. Indeed a sufficient condition in 2D is that no boundary singular vertices are present in the triangulation.

Explicit knowledge of a basis of the pressure space is not needed, when applying an Uzawa-type iteration to approximate the discrete solutions of the Stokes problem in exactly divergence-free finite element spaces. 
It is based on iteratively solving smaller elliptic systems for the velocity and the pressure. 
In the special case of the Scott--Vogelius element, an Uzawa-type method with divergence penalisation is referred to as \emph{Iterated Penalty Method} (IPM) in the literature~\cite[Ch. 13.1]{Brenner2008}. 

General Uzawa algorithms are long known, see e.g.,~\cite{Glowinski1984,Fortin1983} and widely investigated, among others in~\cite{Nochetto2004}, and  the available results naturally apply to the IPM since it is a special case. 
However, leveraging  the exact divergence constraint for the IPM one may show stronger results. To date this has been analysed only for homogeneous Dirichlet boundary conditions in~\cite{Brenner2008} and for mixed boundary conditions in \cite{Ainsworth2023}. 

We revisit and extend the error analysis to inhomogeneous Dirichlet boundary conditions, proving that the IPM yields  asymptotically (in the iteration) pressure-robust approximations when the compatibility condition is enforced. 
Furthermore, we demonstrate that the IPM solution minimises a discrete energy functional and that the divergence error decays monotonically during the iteration process.

Our main contributions can be summarised as follows:
\begin{itemize}
\item  In Theorem~\ref{thm:quasiopt-mixed} we present error estimates for general mixed finite elements for the Stokes problem subject to inhomogeneous Dirichlet boundary conditions, which are quasi-optimal in norms on the product space of velocity and pressure. 
\item 
For the Scott--Vogelius method in Theorem~\ref{thm:SV-inhom} we establish a pressure-robust quasi-optimal estimate for the velocity, again for inhomogeneous Dirichlet boundary conditions. In the proof we use the fact, that the compatibility condition is a constraint, similarly as the divergence constraint, which can be handled by use of a suitable Fortin operator.
The same applies to other exactly divergence-free mixed finite element pairs, such as the Falk--Neilan element~\cite{FalkNeilan2013} and the Guzmán--Neilan element~\cite{GuzmanNeilan2014a,GuzmanNeilan2014}, based on suitable Fortin interpolation operators 
\item 
We analyse the Iterated Penalty Method for the Stokes problem subject to inhomogeneous Dirichlet boundary conditions. More specifically, in Theorem~\ref{thm:ipm-quasi-opt} we prove asymptotic pressure robustness and derive explicit convergence rates that depend on the inf-sup constant. 
\item 
In Theorem~\ref{thm:mon} we present a monotonicity result stating that the norm of the divergence of the IPM iterates decreases in the course of the iteration. 
\item 
We emphasise the role of compatibility conditions, see~\cite{HeisterRebholzXiao2016}, both in the analysis and in practice: 
without enforcing the zero mean normal trace, neither convergence of the IPM nor pressure robustness can be guaranteed. 
A local mesh refinement as in~\cite{Ainsworth2022} is used to remove boundary singular vertices as well as nearly singular vertices, which improves the inf-sup constant and hence also the convergence rate of the IPM. 
Those aspects are investigated by 2D numerical experiments in Sections~\ref{sec:NumExp_SV} and~\ref{sec:ipm-num-exp}. 
\end{itemize}
Note that due to the linearity of the Stokes problem numerical analysis as well as implementation may be reduced to the case of homogeneous Dirichlet boundary conditions, if there is a divergence-free extension of given (compatible) boundary data.  
In fact, the mixed method for the Scott--Vogelius element as well as the corresponding IPM represent a way to compute such a divergence-free extension in a stable manner.  
However, this reduction to homogeneous boundary conditions does not work for extensions to nonlinear equations such as Navier--Stokes or non-Newtonian fluid equations.
Thus, insights into the imposition of inhomogeneous boundary conditions are highly relevant to those applications. 

\subsubsection*{Outline} 
In Section~\ref{sec:mixed-method}, we consider the Stokes problem subject to inhomogeneous Dirichlet boundary conditions, and its discretisation with a general pair of inf-sup stable finite element spaces. 
We prove a quasi-optimal error estimate in Theorem~\ref{thm:quasiopt-mixed} and discuss for which finite element spaces the conditions are satisfied. 
In Section~\ref{sec:SV}, we recall the  Scott--Vogelius finite element spaces, and present the setup for inhomogeneous boundary conditions. 
We obtain a pressure-robust quasi-optimality estimate in Theorem~\ref{thm:SV-inhom}. Furthermore, we discuss practical aspects and present numerical experiments.  
Section~\ref{sec:ipm} introduces and analyses the IPM, starting from convergence. Then we present an asymptotically pressure-robust a priori error estimate in Theorem~\ref{thm:ipm-quasi-opt}, and finally a monotonicity result for the divergence norms in Theorem~\ref{thm:mon}. 
The section closes with numerical experiments illustrating the theoretical findings. An implementation can be found in Zenodo \cite{zenodo}.

\section{Mixed methods for inhomogeneous Dirichlet boundary conditions}\label{sec:mixed-method}

In this section we show a quasi-optimal error estimate for general mixed methods solving the Stokes equations subject to inhomogeneous boundary conditions. 
For this purpose we employ a Fortin operator for the inhomogeneous setting that preserves the divergence and also certain trace properties~\cite{EickmannTscherpel2025}. 
With such an operator, one may use the standard arguments to remove both constraints from the best approximation error, namely the divergence constraint and the discrete boundary condition.

\subsubsection*{Function spaces}
Let $\Omega\subseteq \R^d$ for $d\geq 2$ be a bounded (open) domain with Lipschitz boundary.
We denote by $(L^2(\Omega),\norm{\cdot}_{L^2(\Omega)})$  the standard Lebesgue space and its norm, and the subspace of functions with zero mean by
\begin{align} \label{eq:def-Lmeanfre}
	\Lmeanfree(\Omega) &\coloneqq \left\{f\in L^2(\Omega) \colon \dashint_\Omega f(x) \dx = 0\right\}. 
\end{align}
Here, the notation for the integral mean is $$\dashint_{\Omega} f \dx \coloneqq \abs{\Omega}^{-1} \int_{\Omega} f \dx.$$  
By $H^1(\Omega)$ we denote the standard Sobolev space with norm $\norm{\cdot}_{H^1(\Omega)}$. 
Furthermore, $H^1_0(\Omega)$ is the closure of the test functions $C^\infty_c(\Omega)$ with respect to the norm $\norm{\cdot}_{H^1(\Omega)}$, and by the Poincaré inequality the seminorm $v \mapsto \norm{\nabla v}_{L^2(\Omega)}$ is a norm on $H^1_0(\Omega)$. 
We denote its dual space by $H^{-1}(\Omega) \coloneqq (H^1_0(\Omega))'$.

In general, for a function space $X$ we denote by $X^d$ the corresponding space of vector-valued functions. 
On the boundary $\partial \Omega$ of $\Omega$ we consider the fractional Sobolev space $H^{1/2}(\partial \Omega)^d$, see~\cite{AdamsFournier2003,Mazya2011}. 
With this, the trace operator $\tr \colon H^1(\Omega)^d \to H^{1/2}(\partial \Omega)^d$ is bounded and invertible. 

We use the subspace of functions with zero mean normal traces and the subspace of divergence-free functions of $H^1(\Omega)$, denoted by 
\begin{align}
	\Hsim &\coloneqq \left\{v\in H^1(\Omega)^d\colon  \int_{\partial \Omega} \tr (v) \cdot n \dsig = 0\right\},\\
	\Hdiv &\coloneqq \big\{ v \in H^1(\Omega)^d \colon \divergence v = 0\big\}. 
\end{align}
Note that we have $\Hdiv \subset \Hsim$, thanks to the identity  
\begin{align}\label{eq:compatibility}
	\int_\Omega \divergence v \dx  = \int_{\partial \Omega } \tr( v)\cdot n \dsig.
\end{align}
Furthermore, \eqref{eq:compatibility} implies that 
\begin{align}
	\divergence(\Hsim) \subset L^2_0(\Omega).
\end{align} 

\subsection{The fluid equations}

For given $F\in H^{-1}(\Omega)^d$,  
$g \in \Hsim$
 and kinematic viscosity coefficient $\nu>0$, we want to find $u \in \Hsim$ and $p \in \Lmeanfree (\Omega)$ such that 
 \begin{subequations}\label{eq:Stokes}
	\begin{alignat}{3}
		- \nu \Delta u + \nabla p &= F \qquad &&\text{ in } \Omega,\\
		\divergence u &= 0 \qquad &&\text{ in } \Omega,
	\end{alignat}
subject to inhomogeneous Dirichlet boundary conditions
\begin{align}\label{eq:bc-inhom}
	\tr(u) = \tr(g) \qquad \text{ on } \partial \Omega. 
\end{align}
\end{subequations}
The requirement~$g \in \Hsim$ is needed for the compatibility with the fact that $\divergence (u) = 0$ and $\tr(u) = \tr(g)$ in view of~\eqref{eq:compatibility}. 
In fact, it would suffice to require that the trace of $g$ is in $H^{1/2}(\partial \Omega)^d $ and that its normal component is mean-free, since an extension is known to exist, cf. Lemma~\ref{lem:extension_op} below. 

Let us introduce the weak formulation of~\eqref{eq:Stokes}, where we denote by $\skp{F}{v}$ for both the integral $\int_{\Omega} F v \dx$ and the duality relation between $H^{-1}(\Omega)^d$ and $H^1_0(\Omega)^d$. 

\subsubsection*{Weak formulation} For $F\in H^{-1}(\Omega)^d$,  
$g \in \Hsim$  and $\nu >0$, find  $(u,p)\in \Hsim \times \Lmeanfree(\Omega)$ such that 
\begin{equation}\label{eq:Stokes-weak-inh}
	\begin{aligned}
		\nu\skp{\nabla u }{\nabla v} - \skp{p}{\divergence v}  &= \skp{F}{v} \qquad &&\text{for all  } v \in H_0^1(\Omega)^d,\\
		\skp{\divergence u}{q}  &= 0 \qquad &&\text{for all } q \in \Lmeanfree (\Omega),\\
		\tr(u)&= \tr(g) \qquad &&\text{on } \partial\Omega.
	\end{aligned}
\end{equation}

Existence and uniqueness follow from standard inf-sup theory by reformulating the problem to a problem with homogeneous boundary conditions. Specifically, we have \emph{inf-sup stability}~\cite{Girault1986} in the sense that there is a constant $\beta>0$ such that 
\begin{equation}\label{def:inf-sup-cond}
 \beta  \leq\inf_{q \in L^2_0(\Omega)\setminus \{0\}} \sup_{v\in H_0^1(\Omega)^d \setminus \{0\}} \frac{\skp{\divergence v}{q}}{\norm{\nabla v}_{L^2(\Omega)} \norm{q}_{L^2(\Omega)}}.
\end{equation}
This inf-sup condition is equivalent to the existence of a right-inverse of the divergence~\cite[Thm.~III.3.1]{Galdi2011}, referred to as \emph{\Bogovskii{} operator}. 

\begin{lemma}[\Bogovskii{} operator]\label{lem:Bog}
	There is a bounded linear operator $\mathcal{B} \colon L^2_0(\Omega) \to H^1_0(\Omega)^d$ such that
	\begin{align*}
		\begin{aligned}
			\divergence \mathcal{B} q  &= q \qquad &&\text{ for all } q \in L^2_0(\Omega), \\
			\norm{\nabla \mathcal{B} q}_{L^2(\Omega)} &\leq \beta^{-1} \norm{q}_{L^2(\Omega)} \qquad &&\text{ for all } q \in L^2_0(\Omega).
		\end{aligned}	
	\end{align*}
\end{lemma}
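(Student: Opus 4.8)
The plan is to recognise Lemma~\ref{lem:Bog} as the classical equivalence between the inf-sup condition~\eqref{def:inf-sup-cond} and the existence of a bounded right inverse of the divergence, and to extract the operator via the closed range theorem. I would view the divergence as a bounded linear map $B \colon H^1_0(\Omega)^d \to \Lmeanfree(\Omega)$, where $H^1_0(\Omega)^d$ carries the inner product $(\nabla\cdot,\nabla\cdot)_{L^2}$ with induced norm $\norm{\nabla\cdot}_{L^2(\Omega)}$ (a norm by Poincar\'e). Note that $Bv$ indeed lies in $\Lmeanfree(\Omega)$, since $\int_\Omega \divergence v \dx = 0$ for $v \in H^1_0(\Omega)^d$ by~\eqref{eq:compatibility} together with the vanishing trace (or by density of $C^\infty_c(\Omega)^d$).

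First I would reformulate~\eqref{def:inf-sup-cond}: it says precisely that the adjoint $B^\ast$, which one identifies with the map $q \mapsto -\nabla q$ from $\Lmeanfree(\Omega)$ into $H^{-1}(\Omega)^d$, is bounded below, i.e. $\norm{B^\ast q}_{H^{-1}(\Omega)} \geq \beta \norm{q}_{L^2(\Omega)}$ for all $q \in \Lmeanfree(\Omega)$. Consequently $B^\ast$ is injective with closed range, so by the closed range theorem $B$ has closed range, and since $\overline{\operatorname{range} B} = (\ker B^\ast)^\perp = \Lmeanfree(\Omega)$ by injectivity of $B^\ast$, the operator $B$ is onto $\Lmeanfree(\Omega)$.

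I would then set $V_0 \coloneqq \ker B = \{v \in H^1_0(\Omega)^d \colon \divergence v = 0\}$, take its orthogonal complement $V_0^\perp$ in $H^1_0(\Omega)^d$, and define $\mathcal{B}$ as the inverse of the continuous bijection $B|_{V_0^\perp} \colon V_0^\perp \to \Lmeanfree(\Omega)$; then $\divergence\mathcal{B}q = q$ and linearity are immediate. The estimate $\norm{\nabla\mathcal{B}q}_{L^2(\Omega)} \le \beta^{-1}\norm{q}_{L^2(\Omega)}$ amounts to $\norm{(B|_{V_0^\perp})^{-1}} \le \beta^{-1}$, which I would obtain by dualising: $B|_{V_0^\perp}$ is a Hilbert-space isomorphism and, since $\skp{\divergence v}{q} = 0$ for $v \in V_0$ while $V_0^\perp$ is isometric to the quotient $H^1_0(\Omega)^d/V_0$, the inf-sup condition shows its adjoint is bounded below by $\beta$; hence the inverse has norm at most $\beta^{-1}$, and translated back this is the asserted bound on $\mathcal B$.

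The one delicate point — and the main obstacle — is pinning down the sharp constant $\beta^{-1}$: mere surjectivity of $B$ comes cheaply from the closed range theorem, but the operator norm of the right inverse requires the duality identity $\norm{(B|_{V_0^\perp})^{-1}} = \norm{((B|_{V_0^\perp})^\ast)^{-1}}$ for an isomorphism of Hilbert spaces, together with the (easy) observation that restricting to $V_0^\perp$, equivalently passing to the quotient, does not degrade the inf-sup constant. Since the statement is exactly~\cite[Thm.~III.3.1]{Galdi2011}, one may alternatively just invoke that reference; I would nonetheless spell out the short argument above for self-containedness.
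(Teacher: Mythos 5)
Your argument is correct: the paper itself offers no proof of Lemma~\ref{lem:Bog}, stating it as the right-inverse reformulation of the inf-sup condition~\eqref{def:inf-sup-cond} and citing \cite[Thm.~III.3.1]{Galdi2011}, and your closed-range-theorem derivation (surjectivity of $\divergence$ from the lower bound on the adjoint, inversion on $(\ker\divergence)^\perp$, and the duality identity $\norm{T^{-1}}=\norm{(T^\ast)^{-1}}$ giving the constant $\beta^{-1}$) is precisely the standard proof of that equivalence, so it matches the route the paper intends. The only point worth noting is that the operator you construct is a generic bounded right inverse rather than Galdi's explicit integral operator, but the lemma claims no more than that, so nothing is missing.
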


Using the \Bogovskii{} operator, a divergence-free extension operator can be constructed as used, e.g., in~\cite{DieningMalekSteinhauer2008} for a divergence corrected Lipschitz truncation. 

\begin{lemma}[{\cite[Lemma~1.4.20]{BernardiGiraultRaviartHechtRiviere2024}} divergence-free extension]\label{lem:extension_op}
		There is a bounded linear operator  $E\colon \tr(\Hsim)\to H_{\divergence}^1(\Omega)$ such that 
		\begin{align*}
			\tr(E \tau) = \tau \qquad \text{ for any }\tau \in \tr(\Hsim).
		\end{align*}  
		In particular there is a constant $c_E>0$ such that 
		\begin{align*}
			\norm{\nabla E\tau}_{L^2(\Omega)} \leq c_E \norm{\tau}_{H^{1/2}(\partial\Omega)} \qquad \text{for all }\tau \in \tr(\Hsim).
		\end{align*}
\end{lemma}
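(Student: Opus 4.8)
The plan is to take an arbitrary bounded $H^1$-extension of the boundary datum and subtract from it a divergence-free correction furnished by the \Bogovskii{} operator of Lemma~\ref{lem:Bog}. First I would fix, once and for all, a bounded linear right inverse $R\colon H^{1/2}(\partial\Omega)^d\to H^1(\Omega)^d$ of the trace operator (which exists since $\tr\colon H^1(\Omega)^d\to H^{1/2}(\partial\Omega)^d$ is bounded and surjective) and denote its operator norm by $C_R$. Given $\tau\in\tr(\Hsim)$, set $w\coloneqq R\tau$, so that $\tr(w)=\tau$ and $\norm{\nabla w}_{L^2(\Omega)}\le\norm{w}_{H^1(\Omega)}\le C_R\norm{\tau}_{H^{1/2}(\partial\Omega)}$. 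In general $\divergence w\neq 0$, so $w$ is not yet the desired extension.

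The key observation is that $\divergence w$ is admissible as input to the \Bogovskii{} operator. Indeed, by the integration-by-parts identity~\eqref{eq:compatibility} and the definition of $\Hsim$,
\begin{align*}
\int_\Omega\divergence w\dx=\int_{\partial\Omega}\tr(w)\cdot n\dsig=\int_{\partial\Omega}\tau\cdot n\dsig=0,
\end{align*}
so $\divergence w\in\Lmeanfree(\Omega)$. This is precisely where the hypothesis $\tau\in\tr(\Hsim)$ --- equivalently, the zero mean normal trace, i.e.\ the compatibility condition --- enters; without it the correction step fails. Then I would set $z\coloneqq\mathcal B(\divergence w)\in H^1_0(\Omega)^d$, which by Lemma~\ref{lem:Bog} satisfies $\divergence z=\divergence w$ and $\norm{\nabla z}_{L^2(\Omega)}\le\beta^{-1}\norm{\divergence w}_{L^2(\Omega)}$, and define $E\tau\coloneqq w-z=R\tau-\mathcal B(\divergence R\tau)$.

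It then remains to check the asserted properties. We have $\divergence(E\tau)=\divergence w-\divergence z=0$, hence $E\tau\in\Hdiv$; since $z\in H^1_0(\Omega)^d$ has vanishing trace, $\tr(E\tau)=\tr(w)=\tau$; and $E$ is linear because $R$, $\divergence$, and $\mathcal B$ all are. For the norm bound I would combine $\norm{\divergence w}_{L^2(\Omega)}\le\sqrt d\,\norm{\nabla w}_{L^2(\Omega)}$ with the triangle inequality:
\begin{align*}
\norm{\nabla E\tau}_{L^2(\Omega)}&\le\norm{\nabla w}_{L^2(\Omega)}+\norm{\nabla z}_{L^2(\Omega)}\\
&\le\big(1+\sqrt d\,\beta^{-1}\big)\norm{\nabla w}_{L^2(\Omega)}\le\big(1+\sqrt d\,\beta^{-1}\big)C_R\,\norm{\tau}_{H^{1/2}(\partial\Omega)},
\end{align*}
so the claim holds with $c_E\coloneqq(1+\sqrt d\,\beta^{-1})C_R$. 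I do not expect a genuine analytic obstacle here: once the \Bogovskii{} operator is available the construction is short. The two points to be careful about are choosing a genuinely \emph{linear} right inverse of the trace (so that the resulting operator $E$ is linear, not merely well defined for each datum), and ensuring the correction $z$ has zero trace so that it does not disturb the boundary values of $w$.
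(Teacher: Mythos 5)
Your construction (arbitrary bounded linear trace right inverse, then subtracting the \Bogovskii{} correction $\mathcal B(\divergence R\tau)$, with the compatibility condition $\int_{\partial\Omega}\tau\cdot n\dsig=0$ guaranteeing $\divergence R\tau\in\Lmeanfree(\Omega)$) is correct and is exactly the divergence-corrected extension the paper alludes to immediately before the lemma, for which it cites the literature rather than writing out a proof. All steps, including the bound $\norm{\divergence w}_{L^2(\Omega)}\le\sqrt d\,\norm{\nabla w}_{L^2(\Omega)}$ and the resulting constant $c_E=(1+\sqrt d\,\beta^{-1})C_R$, check out.
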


\begin{remark}\label{rmk:div-cor}
In~\eqref{eq:Stokes-weak-inh} we require that~$g \in \Hsim$. 
Hence, without loss of generality one can apply Lemma~\ref{lem:extension_op} and use $\tilde{g}\coloneq E(\tr(g))\in \Hdiv$ instead of $g$, since $\Hdiv\subset \Hsim$.
Then, reduction to the homogeneous problem for $ \overline{u}= u - \widetilde{g} \in H^1_{0,\divergence}(\Omega)$ yields the well-known existence to~\eqref{eq:Stokes-weak-inh}. 
Furthermore, one can show uniqueness, and the solution only depends on $\tr(g)$, and is independent of the extension. 
\end{remark}

\begin{remark}
The Cauchy stress is symmetric, and hence the viscous term actually reads $2 \nu \divergence (D u )$ instead of $ \nu \divergence \nabla u = \nu \Delta u$, for $Du \coloneqq \tfrac{1}{2}(\nabla u + (\nabla u)^\top)$ the symmetric velocity gradient. 
Note however, that for divergence-free functions both terms coincide, and hence it is equivalent to work with gradients instead of symmetric gradients at least for the weak solution and for finite element methods with exact divergence constraints. 
Since this is the main focus below, we restrict the presentation to gradients throughout. 
\end{remark}

\subsection{Mixed finite element methods}
We recall the setup for mixed finite element methods for incompressible fluid equations. 
We assume that the domain $\Omega$ has a polyhedral boundary and let $(\tria_i)_{i\in \mathbb{N}}$ be a family of conforming triangulations of $\overline{\Omega}$, see~\cite[Sec.~2.1]{Ciarlet2002} consisting of $d$-dimensional closed simplices. 
We define the mesh size $h_i \coloneqq \max_{T\in \tria_i} h_T>0$ where $h_T\coloneqq \diameter(T)$ for all $T\in \tria_i$. 
In the following, for simplicity we skip the index $i$. 
With slight abuse of notation, we use $h$ as index of a triangulation and refer to the sequence of triangulations as $(\tria_h)_{h}$, even though $h$ does not uniquely determine $\mathcal{T}_h$.
We denote by $\rho_T>0$ the radius of the largest ball inscribed in $T$, for $T \in \tria_h$. 
We assume that the family of triangulations $(\tria_h)_{h}$ is shape-regular, i.e., there is a constant $\chi>0$ such that 
\begin{align*}
	\rho_T \geq \chi h_T  \quad \text{ for all } T\in \tria_h \text{ and all } h .
\end{align*}
On each simplex $T\in\tria_h$ we consider finite-dimensional subspaces denoted with $X(T)\subseteq C(T)^d$ and $Q(T)\subseteq C(T)$, and define the conforming finite element spaces as
\begin{align}
	X_h\coloneqq X_h(\tria_h) &= \{v\in C(\overline \Omega)^d\colon v|_T \in X(T)~\forall\, T\in \tria_h\} \subseteq H^1(\Omega)^d
	\label{def:Xh} \\
	Q_h \coloneqq Q_h(\tria_h) &= \{q\in
	 \Lmeanfree(\Omega)\colon q|_T \in Q(T)~\forall\, T\,\in \tria_h\} \subseteq \Lmeanfree(\Omega).
	\label{def:Qh}
\end{align}
The finite element space with zero mean normal trace, and with discrete divergence constraint, respectively, are denoted by 
\begin{align}
	\label{def:Xhsim}
	X_{h,\sim}
	&\coloneqq 
	X_h \cap H_{\sim}^1(\Omega),\\
	X_{h,\divergence} 
	&\coloneqq 
	\{ v_h \in X_h\colon~\skp{\divergence v_h}{q_h} = 0~\forall q_h\in Q_h\}. 
	\label{def:Xhdiv}
\end{align}
Note that, for most examples one has $	X_{h,\divergence} \cap 	X_{h,\sim}  \not\subset \Hdiv$. 
The homogeneous subspaces are denoted by 
\begin{align}\label{def:Vh}
	V_{h}\coloneqq X_h \cap H_0^1(\Omega)^d  \quad \text{ and } \quad 
		V_{h,\divergence} \coloneqq X_{h,\divergence} \cap H^1_0(\Omega)^d. 
\end{align}
In the following, we shall use the function spaces 
\begin{align}\label{def:Lagrange-k}
	\mathcal{L}_k^1(\tria_h)^d \coloneqq \{v\in C(\overline \Omega)^d\colon v|_T \in \mathcal{P}_k(T)^d~\forall \,T\in \tria_h\},\\ \label{def:DG-ks}
	\mathcal{L}_k^0(\tria_h) \coloneqq \{v\in L^\infty(\Omega)\colon v|_T \in \mathcal{P}_k(T)~\forall \,T\in \tria_h\},	
\end{align}
where $\mathcal{P}_k(T)$ denotes the space of scalar polynomials of degree at most $k\in \N$ on a  simplex $T \in \mathcal{T}_h$. 
Note that $\mathcal{L}_k^1(\tria_h)^d$ arises as $X_h$ in~\eqref{def:Xh} by the choice $X(T) = \mathcal{P}_k(T)^d$ and $\mathcal{L}_k^0(\tria_h)\cap L_0^2(\Omega)$ as $Q_h$ in \eqref{def:Qh} by the choice $Q(T)=\mathcal{P}_k(T)$. 

In what follows we work with general inf-sup stable mixed finite element pairs. 

\begin{definition}{(discrete inf-sup condition)} \label{def:inf-sup}
	A pair of finite element spaces $V_h\times Q_h\subset H^1_0(\Omega)^d \times \Lmeanfree (\Omega)$ is called \emph{inf-sup stable} if there exists a constant $\overline \beta>0$ such that one has 
	\begin{equation}\label{def:discr-inf-sup-cond}
		\overline \beta \norm{q_h}_{L^2(\Omega)} \leq \sup_{v_h\in V_h \setminus \{0\}} \frac{\skp{\divergence v_h}{q_h}}{\norm{\nabla v_h}_{L^2(\Omega)}} \qquad \text{for all }q_h\in Q_h,
	\end{equation}
	uniformely in $h$.
\end{definition}

By the abstract Fortin Lemma, originally due to~\cite{Fortin1977}, the inf-sup stability implies existence of a Fortin operator in the homogeneous case.

\begin{remark}\label{rmk:FortinBogovskii}
\begin{enumerate}[label = (\alph*)]
\item \label{itm:homFortin} The discrete inf-sup stability of a pair of finite element spaces $V_h\times Q_h\subset H^1_0(\Omega)^d\times \Lmeanfree (\Omega)$ is equivalent to the existence of a Fortin operator $\overline{\Pi}_h \colon H_0^1(\Omega)^d\to V_h$~\cite[Lem.~26.9]{Ern2021b} that is divergence-preserving in the dual of the discrete pressure space and stable in the $H_0^1(\Omega)$ seminorm. 
\item \label{itm:discrBog}
We call a linear operator $\mathcal{B}_h \colon L^2_0(\Omega) \to V_h $ a discrete \Bogovskii{} operator, if 
\begin{alignat}{3}\label{eq:Bog-h}
\skp{q_h}{ \divergence \mathcal{B}_h (\xi)} 
&=  
\skp{q_h}{ \xi} \qquad 
&&\text{ for all } q_h \in Q_h \text{ and all } \xi \in L^2_0(\Omega),\\
\label{eq:Bog-h-stab}
\overline{\beta}
\norm{\nabla \mathcal{B}_h(\xi)}_{L^2(\Omega)} 
&\leq
  \norm{\xi}_{L^2(\Omega)} \quad &&\text{ for any } \xi \in L^2_0(\Omega).
\end{alignat} 
Its existence is a result of well-posedness of the following mixed problem:
 For any $\xi \in L^2_0(\Omega)$ find $w_h \eqqcolon \mathcal{B}_h(\xi) \in V_h$ such that 
\begin{equation}\label{eq:bog-mixed}
\begin{aligned}
	\skp{\nabla w_h}{\nabla v_h}&= 0  &&\text{ for any } v_h \in V_{h,\divergence},\\
	\skp{\divergence w_h}{q_h} &=  \skp{\xi}{q_h}  &&\text{ for any } q_h \in Q_{h}.
\end{aligned}
\end{equation}
Its stability follows from standard estimates for the mixed problem.
\end{enumerate}
\end{remark}

In the following, we consider $X_h$ and $Q_h$ such that the corresponding  $V_h\times Q_h\subset H^1_0(\Omega)^d \times \Lmeanfree (\Omega)$ is a pair of inf-sup stable finite element spaces. 

\begin{assumption}[FEM setting]\label{assumpt:FEM}
	Let $(X_h,Q_h)$ be a family of mixed finite element spaces corresponding to a sequence of shape regular triangulations $(\mathcal{T}_h)_{h}$ of $\Omega$ such that $V_h\times Q_h \subset H^1_0(\Omega)^d \times L^2_0(\Omega)$, see~\eqref{def:Vh}, is a family of inf-sup stable mixed spaces with constant $\overline\beta$ according to Definition~\ref{def:inf-sup}. 
\end{assumption}

Let $F \in H^{-1}(\Omega)^d$ and let $g_h \in X_{h} $ be a suitable approximation of $g \in \Hsim$.  
Then, the approximate problem to the inhomogeneous problem~\eqref{eq:Stokes-weak-inh} reads as follows. 

\subsubsection*{Discrete formulation}   Find $(u_h,p_h)\in X_h \times Q_h$ such that 
\begin{align}\label{eq:Stokes-mixed-inh}
	\begin{aligned}
		\nu \skp{\nabla u_h}{\nabla v_h} - \skp{p_h}{\divergence v_h} &= \skp{F}{v_h} \qquad &&\text{for all  } v_h \in V_h,\\
		\skp{\divergence u_h}{q_h} &=  0 \qquad &&\text{for all } q_h \in Q_h,\\
		\tr(u_h) &=\tr(g_h) \qquad && \text{on } \partial\Omega. 
	\end{aligned}
\end{align} 
Since the functions in $X_h$ are continuous, the trace is understood in the classical sense. 
Well-posedness of the discrete formulation is standard, see, e.g.,~\cite[Ch.~53]{Ern2021b}. 
Since we assume slightly more on the data later on for the proof of quasi-optimality, let us sketch the well-posedness result. 

\begin{lemma} Under Assumption~\ref{assumpt:FEM}
existence and uniqueness of~\eqref{eq:Stokes-mixed-inh} holds for any $F\in H^{-1}(\Omega)^d$ and any $g_h \in X_h$. 
\end{lemma}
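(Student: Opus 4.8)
The plan is to reduce the inhomogeneous discrete problem to a homogeneous one and then invoke the standard inf-sup theory (e.g., \cite[Ch.~49 and Ch.~53]{Ern2021b}). First I would pick an arbitrary discrete lift: since $g_h \in X_h$ and $V_h = X_h \cap H_0^1(\Omega)^d$ has finite codimension in $X_h$, the coset $g_h + V_h$ is exactly the affine subspace of functions in $X_h$ with trace $\tr(g_h)$. Writing $u_h = g_h + \overline u_h$ with $\overline u_h \in V_h$, the third equation in~\eqref{eq:Stokes-mixed-inh} is automatically satisfied, and the first two equations become: find $(\overline u_h, p_h) \in V_h \times Q_h$ such that
\begin{align*}
	\nu \skp{\nabla \overline u_h}{\nabla v_h} - \skp{p_h}{\divergence v_h} &= \skp{f}{v_h} - \nu\skp{\nabla g_h}{\nabla v_h} \qquad &&\text{for all } v_h \in V_h,\\
	\skp{\divergence \overline u_h}{q_h} &= -\skp{\divergence g_h}{q_h} \qquad &&\text{for all } q_h \in Q_h.
\end{align*}
The right-hand side of the first line defines a bounded linear functional on $V_h$ (here the term $\nu\skp{\nabla g_h}{\nabla v_h}$ is bounded by $\nu\norm{\nabla g_h}_{L^2(\Omega)}\norm{\nabla v_h}_{L^2(\Omega)}$), and $q_h \mapsto -\skp{\divergence g_h}{q_h}$ is a bounded functional on $Q_h$; note that no compatibility of $\tr(g_h)$ is needed here because $Q_h \subset \Lmeanfree(\Omega)$ is only tested against, so the constant mode causes no trouble.

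Next I would apply the abstract saddle-point well-posedness theorem. The bilinear form $a(\overline u_h, v_h) = \nu\skp{\nabla \overline u_h}{\nabla v_h}$ is coercive on all of $V_h$ (by the Poincaré inequality on $H_0^1(\Omega)^d$), not merely on the kernel, so the first Brezzi condition holds with constant $\nu/c_P$ for $c_P$ the Poincaré constant. The second Brezzi condition is precisely the discrete inf-sup condition~\eqref{def:discr-inf-sup-cond} guaranteed by Assumption~\ref{assumpt:FEM} with constant $\overline\beta$. Hence the reduced mixed problem has a unique solution $(\overline u_h, p_h) \in V_h \times Q_h$, and therefore $u_h = g_h + \overline u_h \in X_h$ together with $p_h \in Q_h$ is the unique solution of~\eqref{eq:Stokes-mixed-inh}. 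Uniqueness transfers back because two solutions of~\eqref{eq:Stokes-mixed-inh} have the same trace $\tr(g_h)$, hence their difference lies in $V_h \times Q_h$ and solves the homogeneous reduced problem, which forces it to vanish.

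The only mild subtlety — and the closest thing to an obstacle — is making sure the lift $g_h \in X_h$ can be used directly, i.e., that $g_h + V_h$ really is the full solution-affine-space and that the choice of lift does not affect $u_h$; both follow because any two elements of $X_h$ with trace $\tr(g_h)$ differ by an element of $V_h$, so changing the lift only shifts $\overline u_h$ correspondingly and leaves $u_h$ invariant. There is no compatibility condition needed at this stage since we are not yet asserting $\divergence u_h = 0$ pointwise, only the discrete constraint tested against $Q_h$; that constraint is consistent for any $g_h \in X_h$ because $\skp{\divergence g_h}{q_h}$ is a well-defined functional on $Q_h$. I would close by remarking that this is the standard argument and that the additional data regularity assumed later for the quasi-optimal estimates is not required here.
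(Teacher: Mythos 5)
Your argument is correct: the reduction $u_h = g_h + \overline u_h$ with $\overline u_h \in V_h$ is legitimate (any two elements of $X_h$ with the same trace differ by an element of $V_h$), the resulting saddle-point problem on $V_h \times Q_h$ has bounded right-hand sides in both equations, and coercivity of $a$ on $V_h$ together with the discrete inf-sup condition~\eqref{def:discr-inf-sup-cond} from Assumption~\ref{assumpt:FEM} gives existence and uniqueness via the standard Brezzi theorem; your observation that no compatibility of $\tr(g_h)$ is needed because the constraint is only tested against $Q_h \subset \Lmeanfree(\Omega)$ is exactly the right point. The paper takes a slightly different route: instead of leaving the constraint equation inhomogeneous, it first corrects the lift, setting $\widetilde g_h \coloneqq g_h - \mathcal{B}_h\bigl(\divergence(g_h) - \dashint_\Omega \divergence(g_h)\dx\bigr)$ with the discrete \Bogovskii{} operator $\mathcal{B}_h$ of Remark~\ref{rmk:FortinBogovskii}, verifies $\widetilde g_h \in X_{h,\divergence}$ with $\tr(\widetilde g_h) = \tr(g_h)$, and then reduces to a homogeneous problem for $\overline u_h = u_h - \widetilde g_h \in V_{h,\divergence}$. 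The two arguments rest on the same ingredients (the inf-sup condition enters your version through the surjectivity of the discrete divergence onto $Q_h'$, and the paper's version through the well-posedness of the mixed problem~\eqref{eq:bog-mixed} defining $\mathcal{B}_h$), but the paper's explicit construction of a discretely divergence-free lift is reused later (Remark~\ref{rmk:gh-comp-wellp}) to discuss when the compatibility condition $g_h \in X_{h,\sim}$ is genuinely needed, whereas your version is the more economical textbook reduction if well-posedness alone is the goal. One cosmetic point: with the seminorm $\norm{\nabla\cdot}_{L^2(\Omega)}$ used as the norm on $V_h$ throughout the paper, the coercivity constant of $a$ is simply $\nu$; the Poincar\'e constant only enters if you insist on the full $H^1$ norm.
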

\begin{proof}
For a given~$g_h \in X_{h}$ we define 
\begin{align}\label{def:ext-divfree-discr}
	\widetilde{g}_h \coloneqq g_h  -   \mathcal{B}_h \left(\divergence(g_h) - \dashint_{\Omega} \divergence (g_h) \dx \right) \in X_{h,\divergence}. 
\end{align}
Indeed, this is a function in $X_{h,\divergence}$, since for any $q_h \in Q_h \subset L^2_0(\Omega)$ by use of the properties of the homogeneous \Bogovskii{} operator in Lemma~\ref{lem:Bog} and the fact that $q_h \in L^2_0(\Omega)$ is orthogonal to constants we have that  
\begin{align*}
\skp{	\divergence(\widetilde{g}_h)}{ q_h } 
&= 
\skp{\divergence(g_h)}{ q_h }  - 
\left\langle\divergence\left( \mathcal{B}_h \left(\divergence(g_h) - \dashint_{\Omega} \divergence(g_h) \dx \right)\right), q_h \right\rangle \\
&= 
\skp{\divergence(g_h)}{ q_h }  - 
\left\langle \divergence(g_h) - \dashint_{\Omega} \divergence(g_h) \dx , q_h \right\rangle  = 0. 
\end{align*}
Then, we reduce the problem to the homogeneous one for $\overline{u}_h = u_h - \widetilde{g}_h \in V_{h,\divergence}$, as in~\eqref{def:Vh}. 
\end{proof}

\begin{remark}\label{rmk:gh-comp-wellp} 
At first glance one would think that the suitable approximation $g_h\in X_h$ of $g\in H^1_{\sim}$ has to be chosen such that $\tr(g_h) = \tr(v_h)$ for some $v_h \in X_{h,\divergence}$. However, let us argue that this is no additional restriction. 
Indeed, we may replace $g_h$ by any other function with the same trace, since the solution $(u_h,p_h)$ depends only on $\tr(g_h)$. Furthermore, for any inf-sup stable pair $(V_h,Q_h)$ the definition in~\eqref{def:ext-divfree-discr} provides a function $\widetilde{g}_h\in X_{h,\divergence}$ with $\tr(\widetilde{g}_h) = \tr({g}_h)$. 
This is not a contradiction to the fact that using exactly divergence-free finite element functions requires $g_h \in X_{h,\sim}$. 
Indeed, we shall see later that exact divergence constraints arise by both taking $Q_h = \divergence X_{h,\sim}$ as well as imposing that $u_h \in X_{h,\sim}\cap X_{h,\divergence} \subsetneq X_{h,\divergence}$. 
The latter is ensured by taking $g_h \in X_{h,\sim}$. 
\end{remark}

In the following, we employ a trace-preserving Fortin operator in order to derive quasi-optimal error estimates. 
The crucial property is the fact that it is defined on $H^1(\Omega)^d$, but it preserves discrete traces and zero mean normal trace.  

\begin{assumption}[Fortin operators]\label{assumpt:Fortin}
Under Assumption~\ref{assumpt:FEM} assume that for each $h$ there is an operator $\Pi_h \colon H^1(\Omega)^d \to X_h$ with the following properties:  
\begin{enumerate}[label = (\roman*)]
	\item (divergence preservation) 
	\label{itm:div-pres}
	For any $v \in H^1(\Omega)^d$ one has that 
	\begin{align*}
		\skp{\divergence \Pi_h v}{q_h} = \skp{\divergence v}{q_h}  \qquad \text{ for any } q_h\in Q_h; 
	\end{align*}
	\item (stability)
		\label{itm:stab}
	 There is a constant $c_F>0$ such that 
	\begin{align*}
	\norm{\nabla \Pi_h v}_{L^2(\Omega)} \leq c_F \norm{\nabla v}_{L^2(\Omega)} \qquad \text{ for any } v \in H^1(\Omega)^d \;\; \text{ uniformely in } h;
	\end{align*}
	\item (trace preservation) 
		\label{itm:trace}
	\begin{enumerate}[label = (iii\alph*)]
	\item \label{itm:trace-discr}
	For any $v \in H^1(\Omega)^d$ with discrete trace $\tr(v) \in \tr(X_h)$ one has $\tr(\Pi_h v) = \tr(v)$;
	\item \label{itm:trace-mean} $\Pi_h$ preserves zero mean normal traces in the sense that $\Pi_h(\Hsim) \subset X_{h,\sim}$.
	\end{enumerate}
\end{enumerate}
\end{assumption}
\begin{remark}
	For the proof of quasi-optimality for general mixed methods we only require \ref{itm:div-pres},\ref{itm:stab},\ref{itm:trace-discr}. Note however, that the construction of $\Pi_h$ satisfying \ref{itm:div-pres} in most cases ensures \ref{itm:trace-mean} as a by-product. In contrast, for exactly divergence free mixed methods we need \ref{itm:trace-mean} for the quasi-optimality and pressure robustness.
\end{remark}
Note that one may obtain a Fortin operator for a larger velocity space than $H^1_0(\Omega)^d$ from the abstract Fortin Lemma, see Remark \ref{rmk:FortinBogovskii}.
However, this does not ensure the trace preservation properties in Assumption~\ref{assumpt:Fortin}~\ref{itm:trace}.

\begin{remark}[Fortin operators] 
	\hfill
	\begin{enumerate}[label = (\alph*)]
	\item Note that the condition in Assumption~\ref{assumpt:Fortin}~\ref{itm:div-pres} implies that 
$\Pi_h(\Hdiv) \subset X_{h,\divergence}$. 
	\item Local Fortin operators $\overline{\Pi}_h \colon H^1_0(\Omega) \to V_h$ that  satisfy Assumption~\ref{assumpt:Fortin}~\ref{itm:div-pres} and~\ref{itm:stab} for any $v \in H^1_0(\Omega)^d$ are available for most mixed finite element pairs, for a review see~\cite[Sec. 2.2 and A.2.2]{Tscherpel2018}, including the following: 
	\begin{itemize} 
		\item $P2-P0$ element for $d = 2$, see~\cite[Sec.~8.4.3]{Boffi2013}.
		\item  Bernardi--Raugel element~\cite{BernardiRaugel1985} of first order for $d \in \{2,3\}$ and of second order for $d = 3$.
		\item conforming Crouzeix--Raviart element, for $d = 2$, and its generalisations in~\cite{CrouzeixRaviart1973,Mansfield1982}, and~\cite[Ex.~8.6.1, Prop. 8.6.2]{Boffi2013}, see~\cite{GiraultScott2003} for a Fortin operator.
		\item  MINI element for $d \in \{2,3\}$ introduced in~\cite{ArnoldBrezziFortin1984} for $d = 2$, see also~\cite[Sec.~8.4.2, 8.7.1]{Boffi2013}; for the Fortin operator see~\cite[]{BelenkiBerselliDieningEtAl2012}.
		\item Taylor--Hood element due to~\cite{TaylorHood1973} and its generalisations~\cite[Sec.~8.8.2]{Boffi2013} for $d \in \{2,3\}$; for a Fortin operator for $k \geq d$ we refer to~\cite[Sec.~3.1, 3.2]{GiraultScott2003} and the lowest order case $k = 2$ for $d \geq 3$ is covered in~\cite{DieningStornTscherpel2021}, see also~\cite{Scott2021}. 
		\item 
		Guzmán--Neilan elements for $d \in \{2,3\}$
		in~\cite{GuzmanNeilan2014a,GuzmanNeilan2014}. 
		\item 
		Scott--Vogelius element for $d = 2$ and $k \geq 4$~\cite{ScottZhang1990,GuzmanScott2019} on general meshes. 
		For a local Fortin operator see~\cite[A.2.2.2]{Tscherpel2018} based on~\cite{GuzmanScott2019}. 
		This approach uses local \Bogovskii{} operators on neighbouring patches, as developed in~\cite{DieningRuzickaSchumacher2010}. 
		\item Scott--Vogelius element on split meshes for $d \in \{2,3\}$~\cite{ArnoldQin1992,Zhang2005,GuzmanNeilan2018}, for a Fortin operator see~\cite[Sec.~4.3]{FuGuzmanNeilan2020} and~\cite{FarrellMitchellScottEtAl2022}.   
\end{itemize}
\item Existence of a Fortin operator satisfying Assumption~\ref{assumpt:Fortin} is 
immediate and natural for the Bernardi--Raugel element and the Guzmán-Neilan element, see~\cite{DieningStornTscherpel2025} for an exact implementation of the latter in 2D. 
For various mixed finite element pairs the construction of such a Fortin operator is presented  in~\cite{EickmannTscherpel2025}. For the Scott--Vogelius element such a local Fortin operator is constructed in \cite{ParkerSüli2025} (p-robustness) and \cite{EickmannGuzmanetal2025} for meshes including singular vertices.
\end{enumerate}
\end{remark}

\subsection{Quasi-optimality for the inhomogeneous problem}\label{sec:quasi-opt-general}
For the case of homogeneous boundary conditions, quasi-optimal error estimates are classical~\cite[Cor.~(12.5.18)]{Brenner2008}, see also~\cite{John2017} for a review. 
Here, we address the corresponding results for inhomogeneous boundary conditions, which improve on~\cite[Thm.~5.2.4]{BernardiGiraultRaviartHechtRiviere2024}.

\begin{theorem}[{quasi-optimality for mixed method}]\label{thm:quasiopt-mixed}
Let~$\nu >0$, let $F \in H^{-1}(\Omega)^d$ and let~$g\in H^{1}_\sim(\Omega)$ be given. 
Let $(u,p) \in \Hdiv \times \Lmeanfree(\Omega)$ be the weak solution to the Stokes problem~\eqref{eq:Stokes-weak-inh}.  
Under Assumption~\ref{assumpt:FEM} with inf-sup constant $\overline{\beta}$ and Assumption~\ref{assumpt:Fortin} on $\Pi_h$ let $g_h \in X_h$ be such that  $\tr(g_h)=\tr(\Pi_h g)$. 
Let $(u_h,p_h) \in X_h \times Q_h \subset H^1(\Omega)^d \times \Lmeanfree(\Omega)$ be the discrete solution to the mixed formulation~\eqref{eq:Stokes-mixed-inh}. 
Then we have that 
		\begin{align*}
			\norm{\nabla u - \nabla u_h}_{L^2(\Omega)} 
			&\leq 
			c_1 \inf_{v_h \in X_h} \norm{\nabla u - \nabla v_h}_{L^2(\Omega)} + \nu^{-1} \inf_{q_h \in Q_h}  	\norm{p - q_h}_{L^2(\Omega)}, 
			\\
			 \overline{\beta}\nu^{-1} \norm{ p - p_h}_{L^2(\Omega)} &\leq  
			c_1 \inf_{v_h \in X_h} \norm{\nabla u - \nabla v_h}_{L^2(\Omega)} 
			 + 
			\nu^{-1}(1 +  \overline{\beta}) \inf_{q_h \in Q_h}  	\norm{p - q_h}_{L^2(\Omega)},  
		\end{align*}
uniformly in $h$, where $c_1 =  2(1 + c_F)$ and $c_F$ is the constant in Assumption~\ref{assumpt:Fortin}~\ref{itm:stab}. 
\end{theorem}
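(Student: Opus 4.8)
The strategy is the classical Céa-type argument for saddle-point problems, but carried out carefully so that the inhomogeneous boundary data does not leave a residual boundary term. First I would reduce to a function with the correct discrete trace: by Assumption~\ref{assumpt:Fortin}\ref{itm:div-pres} and \ref{itm:trace-discr}, the Fortin image $\Pi_h u \in X_h$ satisfies $\tr(\Pi_h u) = \tr(\Pi_h g)$ because $u$ and $g$ have the same (continuous) trace, and it satisfies $\skp{\divergence \Pi_h u}{q_h} = \skp{\divergence u}{q_h} = 0$ for all $q_h \in Q_h$ since $\divergence u = 0$. Hence $w_h \coloneqq u_h - \Pi_h u \in V_h$ (same discrete trace as $u_h$, cancels) and in fact $w_h \in V_{h,\divergence}$. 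This is the key point that makes the boundary data disappear: the difference of the discrete solution and the Fortin interpolant lives in the homogeneous divergence-constrained space, so all subsequent estimates are purely interior.

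Next I would test the first equation of~\eqref{eq:Stokes-mixed-inh} with $v_h = w_h \in V_{h,\divergence}$, so the $\skp{p_h}{\divergence w_h}$ term vanishes, and subtract the corresponding consistency identity coming from the continuous problem~\eqref{eq:Stokes-weak-inh} tested against $w_h \in H_0^1(\Omega)^d$ (legitimate since $V_h \subset H_0^1(\Omega)^d$). This gives $\nu \skp{\nabla(u_h - u)}{\nabla w_h} = \skp{p - q_h}{\divergence w_h}$ for any $q_h \in Q_h$ (inserting $q_h$ is free because $\skp{q_h}{\divergence w_h} = 0$). Writing $u_h - u = w_h + (\Pi_h u - u)$, rearranging, and applying Cauchy--Schwarz yields $\nu \norm{\nabla w_h}_{L^2}^2 \le \nu \norm{\nabla(\Pi_h u - u)}_{L^2}\norm{\nabla w_h}_{L^2} + \norm{p - q_h}_{L^2}\norm{\divergence w_h}_{L^2}$, and since $\norm{\divergence w_h}_{L^2} \le \norm{\nabla w_h}_{L^2}$ (for the right normalisation of the divergence, or absorbing a dimensional constant), dividing by $\norm{\nabla w_h}_{L^2}$ gives $\nu\norm{\nabla w_h}_{L^2} \le \nu\norm{\nabla(\Pi_h u - u)}_{L^2} + \norm{p-q_h}_{L^2}$. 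Then the triangle inequality $\norm{\nabla(u - u_h)}_{L^2} \le \norm{\nabla(u - \Pi_h u)}_{L^2} + \norm{\nabla w_h}_{L^2}$, together with the Fortin stability bound $\norm{\nabla(u - \Pi_h u)}_{L^2} \le \norm{\nabla(u - v_h)}_{L^2} + \norm{\nabla \Pi_h(v_h - u)}_{L^2} \le (1 + c_F)\norm{\nabla(u - v_h)}_{L^2}$ for any $v_h \in X_h$ (using $\Pi_h v_h = v_h$ on the relevant terms, which follows from \ref{itm:div-pres} and \ref{itm:trace-discr} applied to discrete functions, or more directly from applying the argument to $u - v_h$), delivers the first estimate after taking the infimum over $v_h$ and $q_h$; the factor $2(1+c_F)$ accumulates from the two occurrences of the best-approximation error.

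For the pressure estimate I would use the discrete inf-sup condition (Definition~\ref{def:inf-sup}): for any $q_h \in Q_h$, $\overline\beta \norm{p_h - q_h}_{L^2} \le \sup_{v_h \in V_h\setminus\{0\}} \skp{\divergence v_h}{p_h - q_h}/\norm{\nabla v_h}_{L^2}$. From the first discrete equation, $\skp{p_h}{\divergence v_h} = \nu\skp{\nabla u_h}{\nabla v_h} - \skp{f}{v_h}$, and from the continuous equation $\skp{p}{\divergence v_h} = \nu\skp{\nabla u}{\nabla v_h} - \skp{f}{v_h}$; subtracting, $\skp{p_h - q_h}{\divergence v_h} = \nu\skp{\nabla(u_h - u)}{\nabla v_h} + \skp{p - q_h}{\divergence v_h}$, so Cauchy--Schwarz and $\norm{\divergence v_h}_{L^2}\le \norm{\nabla v_h}_{L^2}$ give $\overline\beta\norm{p_h - q_h}_{L^2} \le \nu\norm{\nabla(u - u_h)}_{L^2} + \norm{p - q_h}_{L^2}$. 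One more triangle inequality $\norm{p - p_h}_{L^2} \le \norm{p - q_h}_{L^2} + \norm{p_h - q_h}_{L^2}$, insertion of the already-proved velocity bound, and taking infima produce the second estimate with the stated constants (the $\overline\beta^{-1}$ on the left and the $(1 + \overline\beta)$ factor on the pressure best-approximation term arise from combining these). The main obstacle, and the place where this proof genuinely differs from the homogeneous case, is the very first step: one must verify that choosing $g_h$ with $\tr(g_h) = \tr(\Pi_h g)$ together with the trace-preservation property \ref{itm:trace-discr} makes $u_h - \Pi_h u$ land in $V_h$ with vanishing discrete divergence — without a Fortin operator that simultaneously respects discrete traces and the divergence pairing (as opposed to merely the abstract homogeneous Fortin operator), a boundary consistency error would survive and corrupt quasi-optimality, which is exactly the improvement over \cite[Thm.~5.2.4]{BernardiGiraultRaviartHechtRiviere2024} being claimed.
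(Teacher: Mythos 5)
Your overall strategy is the same as the paper's: use the trace- and divergence-preserving Fortin operator to build a comparison function in $X_{h,\divergence}$ with trace $\tr(g_h)$, so that the error equation can be tested in $V_{h,\divergence}$ and no boundary term survives; the Galerkin-orthogonality step with the bound $\norm{\divergence v_h}_{L^2(\Omega)}\leq \norm{\nabla v_h}_{L^2(\Omega)}$ for $v_h\in H^1_0(\Omega)^d$, and the inf-sup argument for the pressure, are exactly the ``standard arguments'' the paper invokes.

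The one step you must repair is the claim that $\Pi_h v_h = v_h$ for $v_h\in X_h$ ``follows from \ref{itm:div-pres} and \ref{itm:trace-discr} applied to discrete functions.'' It does not: those two properties determine $\Pi_h v_h$ only up to an element of $V_{h,\divergence}$ (any discretely divergence-free function with zero trace can be added without violating either property), and Assumption~\ref{assumpt:Fortin} nowhere requires $\Pi_h$ to be a projection onto $X_h$. Consequently the chain $\norm{\nabla(u-\Pi_h u)}_{L^2(\Omega)}\leq \norm{\nabla(u-v_h)}_{L^2(\Omega)}+\norm{\nabla \Pi_h(v_h-u)}_{L^2(\Omega)}$ is not justified as written, and your main line of argument (built on $w_h = u_h - \Pi_h u$) stalls at the passage from $\norm{\nabla(u-\Pi_h u)}_{L^2(\Omega)}$ to the unconstrained best-approximation error. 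The correct route is the alternative you mention only in passing: for arbitrary $v_h\in X_h$ set $z_h \coloneqq v_h + \Pi_h(u-v_h)$, so that $u - z_h = (u-v_h) - \Pi_h(u-v_h)$ and stability alone gives $\norm{\nabla(u-z_h)}_{L^2(\Omega)}\leq (1+c_F)\norm{\nabla(u-v_h)}_{L^2(\Omega)}$, while \ref{itm:div-pres} and \ref{itm:trace-discr} still yield $z_h\in X_{h,\divergence}$ and $\tr(z_h)=\tr(\Pi_h u)=\tr(\Pi_h g)=\tr(g_h)$. This is precisely the paper's construction; running your $w_h$-argument with $w_h=u_h-z_h$ in place of $u_h-\Pi_h u$ makes everything go through. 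A minor bookkeeping remark: substituting the full velocity bound into your pressure estimate produces a factor $\nu^{-1}(2+\overline\beta)$ rather than $\nu^{-1}(1+\overline\beta)$ in front of the pressure best-approximation term; this does not affect quasi-optimality.
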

\begin{proof} 
	By the standard arguments using Galerkin orthogonality we arrive at 
	\begin{align}\label{est:quasiopt-1}
		\norm{\nabla u - \nabla u_h}_{L^2(\Omega)}  \leq 2 \inf_{\substack{z_h \in X_{h,\divergence}\\ \tr(z_h)=\tr(g_h)}}  	\norm{\nabla u - \nabla z_h}_{L^2(\Omega)} + \nu^{-1} \inf_{q_h \in Q_h}  	\norm{p - q_h}_{L^2(\Omega)}. 
	\end{align} 
	To remove the constraints in the infimum, analogously to the homogeneous case~\cite[p.~345]{Brenner2008}, 
	see also~\cite[Lem.~4.3]{John2017},
	we employ the Fortin operator $\Pi_h \colon H^1(\Omega)^d \to X_h$ as in Assumption~\ref{assumpt:Fortin}. 
	The fact that it preserves both the divergence and discrete traces 
	allows us to show that for arbitrary $v_h \in X_h$ the function
		\begin{align*}
		z_h \coloneqq v_h + \Pi_h(u - v_h) 
	\end{align*}
	is in $X_{h,\divergence}$ and satisfies $\tr(z_h) = \tr(g_h)$. 
	Indeed, by Assumption~\ref{assumpt:Fortin}~\ref{itm:div-pres} the Fortin operator preserves the divergence, and in combination with the fact that the exact solution $u$ is divergence-free we obtain 
	\begin{align}\label{eq:zh_divfree}
		\skp{\divergence z_h}{q_h} &= \skp{\divergence v_h}{q_h} + \skp{\divergence \Pi_h (u-v_h)}{q_h} = \skp{\divergence v_h}{q_h} + \skp{\divergence (u-v_h)}{q_h} = 0
	\end{align}
	for any $q_h\in Q_h$. Therefore, we have $z_h \in X_{h,\divergence}$. 
	Since by Assumption~\ref{assumpt:Fortin}~\ref{itm:trace-discr} $\Pi_h$ preserves discrete traces, it follows that $\tr(v_h)  = \tr(\Pi_h(v_h))$ and $\tr(\Pi_h(u - g)) = 0$, and therefore we have
	\begin{equation}\label{eq:zh_trace}
	\begin{aligned}
		\tr(z_h) &= \tr(v_h) + \tr(\Pi_h u) - \tr(\Pi_h v_h) = \tr(v_h) + \tr(\Pi_h u) - \tr(v_h) = \tr(\Pi_h u) \\
		&= 
		\tr(\Pi_h g)
		= \tr(g_h).
	\end{aligned}
	\end{equation}
	With stability of the Fortin operator due to Assumption~\ref{assumpt:Fortin}~\ref{itm:stab} we obtain
	\begin{align*}
		 \norm{\nabla u - \nabla z_h}_{L^2(\Omega)} &
		\leq \norm{\nabla u - \nabla (v_h + \Pi_h(u - v_h))}_{L^2(\Omega)} \\
		&\leq \norm{\nabla u - \nabla v_h}_{L^2(\Omega)} + \norm{\nabla \Pi_h(u-v_h)}_{L^2(\Omega)} \\
		&\leq \norm{\nabla u - \nabla v_h}_{L^2(\Omega)} + c_F\norm{\nabla (u- v_h)}_{L^2(\Omega)} \\
		& = (1+c_F) \norm{\nabla u - \nabla v_h}_{L^2(\Omega)}.  
	\end{align*}
	Hence, employing that by~\eqref{eq:zh_divfree} and~\eqref{eq:zh_trace} we have $z_h\in X_{h,\divergence}$ and $\tr(z_h) = \tr(g_h)$ we find
	\begin{align*}
		\inf_{\substack{z_h\in X_{h,\divergence} \\ \tr(z_h) = g_h}} \norm{\nabla u - \nabla z_h}_{L^2(\Omega)} \leq (1+c_F)  \inf_{v_h\in X_h} \norm{\nabla u - \nabla v_h}_{L^2(\Omega)}.
	\end{align*}
	Applying this in \eqref{est:quasiopt-1} yields 
	\begin{align*}
		\norm{\nabla u - \nabla u_h}_{L^2(\Omega)} \leq 2(1+c_F) \inf_{v_h\in X_h} \norm{\nabla u - \nabla v_h}_{L^2(\Omega)}  + \nu^{-1} \inf_{q_h \in Q_h}  	\norm{p - q_h}_{L^2(\Omega)},  
	\end{align*}
	which proves the claim.	
	The pressure estimate can be obtained with standard arguments using the inf-sup stability, see, e.g.,~\cite[Lem.~4.4]{John2017}. 
\end{proof}

Let us discuss sufficient conditions for $g_h = \Pi_h(g)$ to be satisfied in Theorem~\ref{thm:quasiopt-mixed} with Fortin operator $\Pi_h$ from Assumption~\ref{assumpt:Fortin}. 
Since $\tr(g_h)$ enters the implementation it is worthwhile to relax the conditions as far as possible. 

\begin{assumption}[trace-preserving quasi-interpolation operator]\label{assumpt:corSZ}
	In the situation of Assumption~\ref{assumpt:FEM} assume that for each $h$ there is an operator $I_h \colon H^1(\Omega)^d \to X_h$ with the following properties:  
	\begin{enumerate}[label = (\roman*)]
		\item (trace preservation) 
		\label{itm:corSZ_trace}
		\begin{enumerate}[label = (i\alph*)]
			\item \label{itm:corSZ_trace-discr}
			For any $v \in H^1(\Omega)^d$ with discrete trace $\tr(v) \in \tr(X_h)$ one has $\tr(I_h v) = \tr(v)$;
			\item \label{itm:corSZ_trace-mean} $I_h$ satisfies 
			\begin{align*}
			\int_{\partial \Omega} (v - I_h v) \cdot n \dsig = 0 \qquad \text{ for any } v \in H^1(\Omega)^d,
			\end{align*}
			and in particular 
			 preserves zero mean normal traces  $I_h(\Hsim) \subset X_{h,\sim}$.
		\end{enumerate}
		\item (stability)
		\label{itm:corSZ_stab}
		There is a constant $c_I>0$ such that 
		\begin{align*}
			\norm{\nabla I_h v}_{L^2(\Omega)} \leq c_I \norm{\nabla v}_{L^2(\Omega)}  \qquad \text{ for any } v \in H^1(\Omega)^d \;\; \text{ and any } h.
		\end{align*}
	\end{enumerate}
\end{assumption}

\begin{lemma}\label{lem:reduction}
Let $(V_h,Q_h)_h$ be an inf-sup stable pair of spaces as in Assumption~\ref{assumpt:FEM}. 
Let $I_h$ be a trace-preserving quasi-interpolation operator as in Assumption~\ref{assumpt:corSZ}. 
Then there exists a family of Fortin operators $\Pi_h$ as in Assumption~\ref{assumpt:Fortin} with the property 
\begin{align*}
	\tr(\Pi_h(w)) = \tr(I_h(w)) \qquad \text{ for any } w \in H^1(\Omega)^d. 
\end{align*} 	In particular, in Theorem~\ref{thm:quasiopt-mixed} for given $g\in H^1_{\sim}(\Omega)$ it is sufficient to consider $g_h = I_h(g)$. 
\end{lemma}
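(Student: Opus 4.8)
The plan is to build $\Pi_h$ from the homogeneous Fortin operator $\overline{\Pi}_h$ guaranteed by Remark~\ref{rmk:FortinBogovskii}\ref{itm:homFortin} together with the trace-preserving quasi-interpolant $I_h$, using the discrete \Bogovskii{} operator $\mathcal{B}_h$ to repair the divergence. Concretely, for $w\in H^1(\Omega)^d$ set
\begin{equation*}
	\Pi_h(w) \coloneqq I_h(w) + \overline{\Pi}_h\big(w - I_h(w)\big) + \mathcal{B}_h\Big(\divergence(w - I_h w) - \dashint_\Omega \divergence(w - I_h w)\dx\Big),
\end{equation*}
or rather the cleaner variant where one first notes that $w - I_h(w)$ has \emph{discrete trace zero} (since $\tr(I_h w)\in\tr(X_h)$ and, by Assumption~\ref{assumpt:corSZ}\ref{itm:corSZ_trace-discr}, a function with discrete trace $\tau$ and $I_h$ agree, so $\tr(w-I_hw)$ may be identified with a function in $\tr(X_h)$ only when $\tr w$ is discrete — so in general $w - I_h w \notin H^1_0(\Omega)^d$). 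This is the crux: $\overline{\Pi}_h$ is only defined on $H^1_0(\Omega)^d$, whereas $w - I_h(w)$ need not vanish on $\partial\Omega$. I would resolve this either by invoking a Fortin operator defined on all of $H^1(\Omega)^d$ with range in $V_h$ (available for the pairs in question, cf. the remark following Assumption~\ref{assumpt:Fortin}, or constructible via local \Bogovskii{} corrections on patches as in \cite{Tscherpel2018}), or by first subtracting a discrete extension of $\tr(w - I_h w)$; the former is the cleanest and I expect the paper assumes such an $\overline{\Pi}_h\colon H^1(\Omega)^d\to V_h\subset X_h$ with $\overline{\Pi}_h$ preserving divergence against $Q_h$ and $H^1$-stable.

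Granting $\overline{\Pi}_h\colon H^1(\Omega)^d\to V_h$ with the divergence-preservation and stability of Remark~\ref{rmk:FortinBogovskii}\ref{itm:homFortin}, the construction simplifies to
\begin{equation*}
	\Pi_h(w) \coloneqq I_h(w) + \overline{\Pi}_h\big(w - I_h(w)\big).
\end{equation*}
Then I verify the three properties of Assumption~\ref{assumpt:Fortin} in turn. For \ref{itm:div-pres}: since $\overline{\Pi}_h$ preserves divergence in the dual of $Q_h$, for any $q_h\in Q_h$ one has $\skp{\divergence \overline{\Pi}_h(w - I_h w)}{q_h} = \skp{\divergence(w - I_h w)}{q_h}$, hence $\skp{\divergence \Pi_h w}{q_h} = \skp{\divergence I_h w}{q_h} + \skp{\divergence(w - I_h w)}{q_h} = \skp{\divergence w}{q_h}$. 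For \ref{itm:stab}: the triangle inequality gives $\norm{\nabla\Pi_h w} \le \norm{\nabla I_h w} + \norm{\nabla \overline{\Pi}_h(w - I_h w)} \le c_I\norm{\nabla w} + \overline{c}(\norm{\nabla w} + c_I\norm{\nabla w})$, so $c_F = c_I + \overline c(1 + c_I)$ works, where $\overline c$ is the stability constant of $\overline{\Pi}_h$. For \ref{itm:trace}: since $\overline{\Pi}_h(w - I_h w)\in V_h\subset H^1_0(\Omega)^d$ it has zero trace, so $\tr(\Pi_h w) = \tr(I_h w)$; this immediately gives the claimed identity $\tr(\Pi_h w) = \tr(I_h w)$, and then \ref{itm:trace-discr} follows from Assumption~\ref{assumpt:corSZ}\ref{itm:corSZ_trace-discr} and \ref{itm:trace-mean} follows from Assumption~\ref{assumpt:corSZ}\ref{itm:corSZ_trace-mean}, since $w\in\Hsim$ forces $\int_{\partial\Omega}\tr(I_h w)\cdot n\dsig = \int_{\partial\Omega}\tr(w)\cdot n\dsig = 0$.

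For the final sentence, since $\tr(\Pi_h(g)) = \tr(I_h(g))$, the hypothesis $\tr(g_h) = \tr(\Pi_h g)$ in Theorem~\ref{thm:quasiopt-mixed} is equivalent to $\tr(g_h) = \tr(I_h g)$, so taking $g_h = I_h(g)$ is admissible. The main obstacle is genuinely the domain-of-definition issue for the homogeneous Fortin operator: the standard abstract Fortin lemma only produces $\overline{\Pi}_h$ on $H^1_0(\Omega)^d$, and one must either cite/assume a version valid on $H^1(\Omega)^d$ with range in $V_h$ (the ``local Fortin operators'' listed in the remark after Assumption~\ref{assumpt:Fortin} are of this type for all the named element pairs), or insert an intermediate step subtracting a stable discrete lifting of the nonzero boundary trace of $w - I_h w$ before applying $\overline{\Pi}_h$ and then using $\mathcal{B}_h$ to correct the resulting divergence defect; either route is routine once the right building block is in hand, and everything else is triangle inequalities and linearity.
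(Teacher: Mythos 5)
Your overall architecture coincides with the paper's: both proofs write $\Pi_h w = I_h w + (\text{zero-trace correction of } w - I_h w)$, and your verifications of divergence preservation, stability, and the trace identities are exactly the routine computations the paper dismisses as ``straightforward to check''. You also correctly isolate the crux, namely that $w - I_h w \notin H^1_0(\Omega)^d$ in general, so the homogeneous Fortin operator of Remark~\ref{rmk:FortinBogovskii}~\ref{itm:homFortin} cannot be applied to it directly. However, you do not actually clear this obstacle: you defer it to an \emph{assumed} Fortin operator $\overline{\Pi}_h\colon H^1(\Omega)^d\to V_h$ (``I expect the paper assumes such an operator''), which the paper neither assumes nor needs, and the sources you point to do not supply it --- the remark following Assumption~\ref{assumpt:Fortin} lists operators defined on $H^1_0(\Omega)^d$ only, and element-specific patchwise constructions are beside the point for a lemma stated for a general inf-sup stable pair. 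The paper's resolution is to take the correction as $C_h^{\divergence}=\overline{\Pi}_h\circ\mathcal{B}\circ\divergence$ with $\mathcal{B}$ the \emph{continuous} \Bogovskii{} operator of Lemma~\ref{lem:Bog}: one never applies $\overline{\Pi}_h$ to $w-I_hw$ itself, only to $\mathcal{B}(\divergence(w-I_hw))\in H^1_0(\Omega)^d$, which carries the same divergence. The point you miss is where Assumption~\ref{assumpt:corSZ}~\ref{itm:corSZ_trace-mean} enters here: it gives $\int_{\partial\Omega}(w-I_hw)\cdot n\dsig=0$, hence by~\eqref{eq:compatibility} $\divergence(w-I_hw)\in L^2_0(\Omega)$, which is precisely what makes $\mathcal{B}$ applicable; in your write-up that assumption is used only at the very end to verify Assumption~\ref{assumpt:Fortin}~\ref{itm:trace-mean}.

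Your route is salvageable by the same idea: the map $v\mapsto \overline{\Pi}_h\bigl(\mathcal{B}\bigl(\divergence v-\dashint_\Omega\divergence v\dx\bigr)\bigr)$ is a divergence-preserving, $H^1$-seminorm-stable operator from $H^1(\Omega)^d$ into $V_h$, so the object you postulate does exist under Assumption~\ref{assumpt:FEM} alone --- but constructing it is exactly the step you skipped, not something obtainable from the abstract Fortin lemma. As written, the proposal names the central obstacle without removing it, so it is incomplete at the one point where the lemma has content; everything downstream of that point is correct.
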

\begin{proof}
A Fortin operator $\Pi_h$ satisfying Assumption~\ref{assumpt:Fortin} can be constructed as 
\begin{align*}
	\Pi_h v = I_h v + C_h^{\divergence} (v - I_h v) \qquad \text{ for any } v \in H^1(\Omega)^d.
\end{align*}
Here, $C_h^{\divergence} \colon \Hsim \to H^1_0(\Omega)^d$ is a divergence-correcting operator given by $C_h^{\divergence} = \overline{\Pi}_h \circ \mathcal{B} \circ \divergence $, with $\mathcal{B}$ the \Bogovskii{} operator, see Lemma~\ref{lem:Bog}, and $\overline{\Pi}_h$ the homogeneous Fortin operator, see Remark~\ref{rmk:FortinBogovskii}~\ref{itm:homFortin}.
Since $C_h^{\divergence}$ maps to functions with zero trace, it follows that 
\begin{align*}
	\tr(	\Pi_h v) = \tr(I_h v) \qquad \text{ for any } v\in H^1(\Omega)^d,
\end{align*}
which proves the claim. The remaining properties in Assumption \ref{assumpt:Fortin} are straightforward to check.
\end{proof}

\begin{remark}\label{rmk:cond-gh}
	\begin{enumerate}[label = (\alph*)]
		\item Note that with Assumption~\ref{assumpt:Fortin}~\ref{itm:trace-mean} and Assumption~\ref{assumpt:corSZ}~\ref{itm:corSZ_trace-mean} each of the conditions $\tr(g_h)=\tr(\Pi_h g)$ and $\tr(g_h) = \tr(I_h g)$ imply that $g_h \in \Hsim$. 
		\item If $g\in \Hsim$ satisfies that $g \in \tr(X_{h})$, then by the discrete trace preservation property in Assumption~\ref{assumpt:Fortin}~\ref{itm:trace-discr} we have $\tr(g)=\tr(\Pi_h g)$.
		For this reason, we may choose $g_h$ as an interpolation of $g$, and the assumption of the previous proposition is satisfied trivially.  
		\item It is also possible to obtain an operator as in Assumption~\ref{assumpt:corSZ} with the additional property that it is local.
	\end{enumerate} 
\end{remark} 

\begin{example}[trace-preserving interpolation operators] \label{rmk:Ih}\hfill
	\begin{enumerate}[label = (\alph*)]
		\item If $X_h$ is some (possibly enriched) Lagrange finite element space, the Scott--Zhang operator with discrete trace preservation can be modified to fulfil Assumption~\ref{assumpt:corSZ} while retaining its approximation properties. 
	For example, a modification of the degrees of freedom to include face moments yields \ref{itm:corSZ_trace-mean} in Assumption~\ref{assumpt:corSZ}. Alternatively a correction on each face by means of face bubble functions can be performed, see~\cite[Rmk. 5.4.10]{BernardiGiraultRaviartHechtRiviere2024}. 
	Both approaches are viable in the case $\mathcal{L}^1_d(\mathcal{T})^d \subset X_h$.
		\item In the following, we shall consider a simpler version of an operator $I_h$, that we use in the implementation below. 
		Here, a modification is done only on a single boundary face, see Section~\ref{sec:compat} below. 		
		\item 
		An operator satisfying Assumption~\ref{assumpt:corSZ} for continuous functions has been constructed  in~\cite{HeisterRebholzXiao2016}. 
		\item Another operator similarly as in Assumption~\ref{assumpt:corSZ} has been presented in~\cite[Ch.~6.2.1]{BernardiGiraultRaviartHechtRiviere2024}.
		\item \label{itm:Ih-split}
		On split meshes a modification of the Scott--Zhang operator can be obtained without requiring that $\mathcal{L}^1_d(\mathcal{T})^d \subset X_h$. 
		For example, for the Powell--Sabin type splits in 2D and 3D, each boundary face of the original mesh $\mathcal{T}_h$ has a new interior vertex in the split mesh $\widetilde{\mathcal{T}}_h$. Hence, one may use $\mathcal{L}^1_1(\widetilde{\mathcal{T}}_h)$ face `bubble' functions in the construction of $I_h$. Indeed the Scott--Zhang operator in $\mathcal{L}^1_1(\mathcal{T}_h)^d$ may be corrected with those functions. 
		
	\end{enumerate}
\end{example}
	
\section{The Scott--Vogelius element}\label{sec:SV}

Many of the standard mixed finite element methods are not pressure-robust. 
This has the effect that the velocity error depends on the pressure best approximation error, and on the inverse of the viscosity. 
This may cause severe problems in case of irregular pressure or large Reynolds numbers. 

Here we consider a particular example of pressure-robust finite elements, the Scott--Vogelius element~\cite{ScottVogelius1985, GuzmanScott2019}. 
We refer to~\cite{John2017} and \cite{Kreuzer2020,Kreuzer2021} for an incomplete list of alternative strategies to achieve pressure-robust quasi-optimal error estimates. 
The Scott--Vogelius finite element method leads to exactly divergence-free finite element functions for the velocity, and thus, ensures pressure robustness. 

\subsubsection*{The homogeneous case}

In the homogeneous case the velocity space is chosen as 
\begin{align}\label{def:Vh_SV}
V_h \coloneqq  \mathcal{L}_k^1(\tria_h)^d \cap H_0^1(\Omega)^d,
\end{align}
 and the pressure space is defined as the divergence of the velocity space
\begin{align}\label{def:Qh_SV_hom}
	Q_h \coloneqq \divergence V_h \subseteq \mathcal{L}^0_{k-1}(\tria_h) \cap L_0^2(\Omega).
\end{align}
Since the space of discretely divergence-free velocity functions is exactly divergence-free  
\begin{align*}
V_{h,\divergence}\coloneqq \{v_h\in V_h\colon \skp{\divergence v_h}{q_h} = 0~\forall q_h\in Q_h\} \subseteq \Hdiv \cap H_0^1(\Omega)^d
\end{align*}
one can show that the resulting method is pressure-robust as defined in~\cite{John2017}.

 As before, we work under the assumption of inf-sup stability of the family of pairs $(V_h,Q_h)_{h}$, see Assumption~\ref{assumpt:FEM}. 
While inf-sup stability is available for $k\geq 4$ in dimension $d = 2$~\cite{GuzmanScott2019} provided that there are no nearly singular vertices in the triangulation, in dimension $d = 3$ the situation is less well understood.  
Inf-sup stability is known to hold for $k \geq 6$ on certain regular triangulations~\cite{Zhang2011a}, also referred to as Freudenthal triangulation. 
Based on numerical experiments in~\cite{Farrell2024} 
Farrell et al.~conjecture inf-sup stability for $k\geq 4$ on Freudenthal meshes. 
By using split meshes~\cite{ArnoldQin1992,PowellSabin1977,WorseyFarin1987}, the polynomial degree for which inf-sup stability is available, can be lowered in both $d = 2$~\cite{Zhang2008,GuzmanLischkeNeilan2020,Fabien2022} and $d = 3$~\cite{Zhang2005,Zhang2011b,Fabien2022} dimensions; see~\cite{Neilan2020} for a review. However, the mesh split also causes an increase of the total dimension as discussed in~\cite{ScottTscherpel2024}.

In the following, we introduce the method for inhomogeneous boundary conditions, and prove the corresponding pressure-robust quasi-optimal error estimate in Subsection~\ref{sec:SV-mixed-inhom}. 
Then we comment on some practical aspects for the implementation including a mesh modification that improves the inf-sup constant, and the practical treatment of the compatibility condition on $g_h$ in Subsection~\ref{sec:practical_aspects}.  
 Finally, in Subsection~\ref{sec:NumExp_SV}, we present numerical examples showcasing the influence of the latter on the divergence of the discrete velocity approximation. Also the pressure robustness of the Scott--Vogelius method for inhomogeneous boundary conditions is visualised and contrasted with the non-pressure-robust Taylor--Hood method. 

\subsection{Scott--Vogelius method for inhomogeneous boundary conditions}\label{sec:SV-mixed-inhom}

For the Scott--Vogelius method we consider the following spaces for problem~\eqref{eq:Stokes-mixed-inh}
\begin{align}\label{def:Xh-SV}
	X_h &\coloneq \mathcal{L}_k^1(\tria_h)^d,\\
	X_{h,\sim} &\coloneq X_h \cap H_{\sim}^1(\Omega), 
	\label{def:Xhsim-SV}
	\end{align}
 see~\eqref{def:Xhsim}.
Then the homogeneous subspace in~\eqref{def:Vh_SV} is $V_h =  X_{h} \cap H^1_0(\Omega)^d$.
To avoid any ambiguity regarding the definition of the pressure space we make the following assumption. 

\begin{assumption}\label{assumpt:pressure-sp}
	Assume that we have $\divergence V_h = \divergence X_{h,\sim}$ for any $h$.
\end{assumption}
Thanks to Assumption~\ref{assumpt:pressure-sp} the pressure space for the homogeneous case coincides with the inhomogeneous one defined as 
\begin{align}
	\label{def:Qh_SV}
	Q_h& \coloneq \divergence X_{h,\sim}  = \divergence V_h \subset L^2_0(\Omega).
\end{align}
As demonstrated in~\cite[Table 3 Case I]{Ainsworth2022} in dimension $d = 2$  Assumption~\ref{assumpt:pressure-sp} is satisfied, provided that no triangulation $\mathcal{T}_h$ has any boundary singular vertices. 
For a definition of the latter we refer to~\cite[Sec.~2]{GuzmanScott2019} and Section \ref{sec:mesh_mod} below.

\begin{remark}\label{rmk:ex-div}
	For the uniqueness of the pressure to the inhomogeneous problem, inf-sup stability of $(V_h,\divergence V_h)$ suffices, and is available in~\cite{GuzmanScott2019}. 	
	On the other hand, to obtain that the discretely divergence-free velocity functions are exactly divergence-free, i.e., that
	\begin{align}\label{def:Xhdiv-SV}
		X_{h,\sim,\divergence} \coloneqq	\{ v_h \in X_{h,\sim}\colon~\skp{\divergence v_h}{q_h} = 0~\forall q_h\in Q_h\} \subset \Hdiv,
	\end{align}
	the pressure space $Q_h = \divergence X_{h,\sim}$ has to be used. 
	Let us remark that the latter is essential to set up the iterated penalty method in the subsequent section. 
	
	Related to this, let us recall that well-posedness of the discrete problem does not require $g_h \in X_h$ to be compatible in the sense that $g_h \in X_{h,\sim}$. 
	If we do not ensure that $g_h \in X_{h,\sim}$, then the approximate solutions may not be exactly divergence-free. 
	However, under Assumption~\ref{assumpt:pressure-sp}, if $g_h \in X_{h,\sim}$, then the approximate solution $u_h$ is exactly divergence-free.
\end{remark}

In the following, we obtain a pressure-robust version of quasi-optimality of the Scott--Vogelius method applied to the inhomogeneous problem~\eqref{eq:Stokes-mixed-inh}. 
This is a refined version of Theorem~\ref{thm:quasiopt-mixed}. 

As before this is based on a trace-preserving Fortin operator, see Assumption~\ref{assumpt:Fortin}. 

\begin{proposition}[\cite{EickmannTscherpel2025}]\label{prop:SV-op}
For $k \in \mathbb{N}$ let $X_h, X_{h,\sim,\divergence},V_h, Q_h$ as in~\eqref{def:Xh-SV}, \eqref{def:Xhdiv-SV}, \eqref{def:Vh_SV} and~\eqref{def:Qh_SV} be the discrete spaces of the Scott--Vogelius element and let Assumptions~\ref{assumpt:FEM} and~\ref{assumpt:pressure-sp} be satisfied. 
\begin{enumerate}[label = (\roman*)]
	\item	If $k \geq d$, then there exists an operator $I_h$ satisfying Assumption~\ref{assumpt:corSZ}. 
	\item There exists a Fortin operator $\Pi_h$ as in Assumption~\ref{assumpt:Fortin} with the property that 
	\begin{align*}
		\tr(\Pi_h v) = \tr(I_h v) \qquad \text{ for all } v \in H^1(\Omega)^d.
	\end{align*}
\end{enumerate}
\end{proposition}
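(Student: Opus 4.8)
The plan is to construct $I_h$ in part (i) as a boundary correction of a Scott--Zhang operator, and then to obtain part (ii) directly from Lemma~\ref{lem:reduction}; the construction follows \cite{EickmannTscherpel2025}. For part (i), I start from a boundary-conforming Scott--Zhang quasi-interpolation operator $\interpSZ \colon H^1(\Omega)^d \to X_h = \mathcal{L}_k^1(\tria_h)^d$, applied componentwise, with the averaging faces for boundary nodes chosen on $\partial\Omega$; then $\interpSZ$ is $H^1$-stable, has the usual local approximation properties, and preserves discrete traces, i.e.\ $\tr(\interpSZ v) = \tr(v)$ whenever $\tr(v)\in\tr(X_h)$. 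For each boundary face $F$ with outer unit normal $n$ and adjacent simplex $T_F$, let $b_F$ be the product of the $d$ barycentric coordinates of $T_F$ belonging to the vertices of $F$, extended by zero outside $T_F$. This is a polynomial of degree $d$ which vanishes on $\partial T_F$ away from the interior of $F$, so by the hypothesis $k\ge d$ the field $\psi_F\coloneqq b_F\,n$ lies in $X_h$, with $\psi_F\cdot n = b_F$ on $F$ and its trace supported on $\overline{F}$. I then set
\begin{align*}
 I_h v \coloneqq \interpSZ v + \sum_{F\subset\partial\Omega} c_F(v)\,\psi_F, \qquad c_F(v)\coloneqq \frac{\int_F (v-\interpSZ v)\cdot n\dsig}{\int_F b_F\dsig}.
\end{align*}

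By construction $\int_F (v-I_h v)\cdot n\dsig = 0$ on every boundary face $F$, and summing over the boundary faces gives $\int_{\partial\Omega}(v-I_h v)\cdot n\dsig = 0$ for all $v\in H^1(\Omega)^d$, which is Assumption~\ref{assumpt:corSZ}\ref{itm:corSZ_trace-mean}, and in particular $I_h(\Hsim)\subset X_{h,\sim}$. If $\tr(v)\in\tr(X_h)$, then $v-\interpSZ v$ vanishes on $\partial\Omega$, hence every $c_F(v)=0$ and $\tr(I_h v)=\tr(\interpSZ v)=\tr(v)$, which is Assumption~\ref{assumpt:corSZ}\ref{itm:corSZ_trace-discr}. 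The correction terms are of higher order for smooth data, so the approximation properties of $\interpSZ$ are retained; alternatively one may instead redefine the boundary-face degrees of freedom of $\interpSZ$ so as to include the normal moment $w\mapsto\int_F w\cdot n$, which again requires $k\ge d$.

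I expect the only genuinely nontrivial step of part (i) to be the uniform $H^1$-stability, Assumption~\ref{assumpt:corSZ}\ref{itm:corSZ_stab}, of the corrected operator. By Cauchy--Schwarz, $\abs{c_F(v)}\le\bigl(\int_F b_F\dsig\bigr)^{-1}\abs{F}^{1/2}\norm{v-\interpSZ v}_{L^2(F)}$; a scaled trace inequality combined with the local approximation estimate for $\interpSZ$ yields $\norm{v-\interpSZ v}_{L^2(F)}\le C h_F^{1/2}\norm{\nabla v}_{L^2(\omega_F)}$ on the mesh patch $\omega_F$ of $F$, while shape regularity gives $\int_F b_F\dsig$ and $\abs{F}$ of order $h_F^{d-1}$ and $\norm{\nabla\psi_F}_{L^2(\Omega)}\le C h_F^{d/2-1}$. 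Multiplying these estimates, the powers of $h_F$ cancel, so that $\abs{c_F(v)}\,\norm{\nabla\psi_F}_{L^2(\Omega)}\le C\norm{\nabla v}_{L^2(\omega_F)}$ with $C$ independent of $h$ and $F$. Summing over boundary faces, and using the bounded number of boundary faces per simplex together with the finite overlap of the patches $\omega_F$, gives $\norm{\nabla I_h v}_{L^2(\Omega)}\le C\norm{\nabla v}_{L^2(\Omega)}$. This balancing of the small ``mass'' $\int_F b_F\dsig$ of the bubble against the quantity it must absorb is the \emph{main obstacle}, and it is exactly where shape regularity and the hypothesis $k\ge d$ (which guarantees that the degree-$d$ normal face bubbles $\psi_F$ lie in $X_h$) are used.

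For part (ii), I apply Lemma~\ref{lem:reduction} to the inf-sup stable pair $(V_h,Q_h)$ of the Scott--Vogelius element (Assumption~\ref{assumpt:FEM}) with the operator $I_h$ from part (i). This produces the Fortin operator $\Pi_h v = I_h v + C_h^{\divergence}(v-I_h v)$, where $C_h^{\divergence}=\overline{\Pi}_h\circ\mathcal{B}\circ\divergence$ with $\overline{\Pi}_h$ the homogeneous Fortin operator of Remark~\ref{rmk:FortinBogovskii}\ref{itm:homFortin} and $\mathcal{B}$ the \Bogovskii{} operator; the composition is well defined because $v-I_h v\in\Hsim$ by Assumption~\ref{assumpt:corSZ}\ref{itm:corSZ_trace-mean}, hence $\divergence(v-I_h v)\in L^2_0(\Omega)$ lies in the domain of $\mathcal{B}$. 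As in Lemma~\ref{lem:reduction}, $\Pi_h$ satisfies Assumption~\ref{assumpt:Fortin}: divergence preservation follows from that of $\overline{\Pi}_h$ in the dual of $Q_h$ together with $\divergence\mathcal{B}q=q$ for $q\in L^2_0(\Omega)$; stability follows from the stability of $\interpSZ$, of $\overline{\Pi}_h$, and from $\norm{\nabla\mathcal{B}q}_{L^2(\Omega)}\le\beta^{-1}\norm{q}_{L^2(\Omega)}$ in Lemma~\ref{lem:Bog}; and since $C_h^{\divergence}$ maps into $H^1_0(\Omega)^d$ one has $\tr(\Pi_h v)=\tr(I_h v)$, whence Assumption~\ref{assumpt:Fortin}\ref{itm:trace-discr} and, via $I_h(\Hsim)\subset X_{h,\sim}$, also Assumption~\ref{assumpt:Fortin}\ref{itm:trace-mean}. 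Thus part (ii) reduces to a direct appeal to Lemma~\ref{lem:reduction}, and no further work is needed.
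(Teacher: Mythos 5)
Your proposal is correct and follows exactly the route the paper indicates: the paper itself defers the proof of Proposition~\ref{prop:SV-op} to the citation \cite{EickmannTscherpel2025}, but its own outline (the normal-face-bubble correction of a Scott--Zhang operator described in Example~\ref{rmk:Ih} and Section~\ref{sec:compat} for part (i), and Lemma~\ref{lem:reduction} for part (ii)) is precisely what you carry out, including the correct scaling argument showing that $\abs{c_F(v)}\,\norm{\nabla\psi_F}_{L^2(\Omega)}$ is bounded by $\norm{\nabla v}_{L^2(\omega_F)}$ uniformly in $h$.
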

Note that for general meshes in $d = 2$ Assumption~\ref{assumpt:FEM} is satisfied, provided that $k \geq 4$, and hence requiring $k \geq d=2$ is no additional restriction. See also \cite{EickmannGuzmanetal2025} for a local Fortin operator in $d=2$ if $k\geq 4$.

\begin{theorem}[quasi-optimality  for inhomogeneous boundary data]\label{thm:SV-inhom}
	Let~$\nu >0$, let $F \in H^{-1}(\Omega)^d$ and let~$g\in H^{1}_\sim(\Omega)$ be given. 
	Let $(u,p) \in \Hdiv \times \Lmeanfree(\Omega)$ be the weak solution to the Stokes problem~\eqref{eq:Stokes-weak-inh}.  
For some $k \in \mathbb{N}$ let $X_h, X_{h,\sim,\divergence},V_h, Q_h$ be as in~\eqref{def:Xh-SV}, \eqref{def:Xhdiv-SV}, \eqref{def:Vh_SV} and~\eqref{def:Qh_SV} satisfying Assumptions~\ref{assumpt:FEM} with inf-sup constant $\overline \beta>0$. Additionally let Assumption~\ref{assumpt:pressure-sp} be satisfied.
Furthermore, let $g_h \in X_h$ be such that  $\tr(g_h)=\tr(I_h g)= \tr(\Pi_h g)$, for the operators as in Proposition~\ref{prop:SV-op}, i.e., in  particular we have $g_h \in X_{h,\sim}$. 
Let $(u_h,p_h) \in X_{h,\sim,\divergence} \times Q_h \subset \Hdiv \times \Lmeanfree(\Omega)$ be the discrete solution to~\eqref{eq:Stokes-mixed-inh}.  
Then we have that 
	\begin{align*}
		\norm{\nabla u - \nabla u_h}_{L^2(\Omega)} &\leq c_1 \inf_{v_h \in X_h} \norm{\nabla u - \nabla v_h}_{L^2(\Omega)} , 
		\\
		\overline{\beta}\nu^{-1} \norm{ p - p_h}_{L^2(\Omega)} &\leq  
		c_1 \inf_{v_h \in X_h} \norm{\nabla u - \nabla v_h}_{L^2(\Omega)} 
		+ 
	\overline{\beta}	\nu^{-1}  \inf_{q_h \in Q_h}  	\norm{p - q_h}_{L^2(\Omega)},  
	\end{align*}
	uniformly in $h$, where $c_1 = 2(1 + c_F)$ and $c_F$ is the constant in Assumption~\ref{assumpt:Fortin}~\ref{itm:stab}, see Proposition~\ref{prop:SV-op}. 
\end{theorem}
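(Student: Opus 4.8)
The key point is that for the Scott--Vogelius element the pressure space satisfies $Q_h = \divergence V_h = \divergence X_{h,\sim}$ (Assumption~\ref{assumpt:pressure-sp}), so the discretely divergence-free velocities in $X_{h,\sim}$ are \emph{exactly} divergence-free, and moreover the pressure can be \emph{eliminated} from the velocity equation by testing only with divergence-free velocities. The plan is to mimic the proof of Theorem~\ref{thm:quasiopt-mixed}, but to exploit this extra structure to drop the pressure best-approximation term from the velocity estimate.

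First I would set up the right discrete test space: since $g_h \in X_{h,\sim}$ by the hypothesis $\tr(g_h) = \tr(I_h g) = \tr(\Pi_h g)$ together with Assumption~\ref{assumpt:Fortin}\ref{itm:trace-mean}, the discrete solution lies in $X_{h,\sim,\divergence}$, and $u_h - g_h \in V_{h,\divergence}$. Taking the difference of the continuous equation~\eqref{eq:Stokes-weak-inh} and the discrete equation~\eqref{eq:Stokes-mixed-inh} tested against $v_h \in V_{h,\divergence}$, the pressure terms $\skp{p}{\divergence v_h}$ and $\skp{p_h}{\divergence v_h}$ both vanish because $\divergence v_h = 0$ exactly (for $p$) and $\divergence v_h \perp Q_h \ni p_h$ (for $p_h$). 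This yields the clean Galerkin orthogonality $\nu\skp{\nabla(u - u_h)}{\nabla v_h} = 0$ for all $v_h \in V_{h,\divergence}$, with \emph{no} pressure contribution. Standard Céa-type manipulation then gives
\begin{align*}
\norm{\nabla u - \nabla u_h}_{L^2(\Omega)} \leq 2 \inf_{\substack{z_h \in X_{h,\sim,\divergence}\\ \tr(z_h) = \tr(g_h)}} \norm{\nabla u - \nabla z_h}_{L^2(\Omega)},
\end{align*}
i.e.\ the same as~\eqref{est:quasiopt-1} but without the $\nu^{-1}\inf_{q_h}\norm{p - q_h}$ term.

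Next I would remove the constraints from the infimum exactly as in Theorem~\ref{thm:quasiopt-mixed}, using the Fortin operator $\Pi_h$ from Proposition~\ref{prop:SV-op}: for arbitrary $v_h \in X_h$ set $z_h := v_h + \Pi_h(u - v_h)$. Divergence preservation (Assumption~\ref{assumpt:Fortin}\ref{itm:div-pres}) together with $\divergence u = 0$ gives $z_h \in X_{h,\divergence}$, and since $Q_h = \divergence X_{h,\sim}$ this is the same as $z_h \in X_{h,\sim,\divergence}$ once we know $z_h \in X_{h,\sim}$; the latter follows because $u \in H^1_\sim(\Omega)$ and $\Pi_h$ preserves zero mean normal traces (Assumption~\ref{assumpt:Fortin}\ref{itm:trace-mean}), while $v_h + \Pi_h(-v_h)$ has zero mean normal trace by discrete trace preservation — or more directly, $\tr(z_h) = \tr(\Pi_h u) = \tr(\Pi_h g) = \tr(g_h)$ by the same computation as~\eqref{eq:zh_trace}, and $g_h \in X_{h,\sim}$. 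Stability of $\Pi_h$ (Assumption~\ref{assumpt:Fortin}\ref{itm:stab}, constant $c_F$) then bounds $\norm{\nabla u - \nabla z_h}_{L^2(\Omega)} \leq (1 + c_F)\norm{\nabla u - \nabla v_h}_{L^2(\Omega)}$, which combined with the displayed estimate yields the first inequality.

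For the pressure estimate I would use the discrete inf-sup condition~\eqref{def:discr-inf-sup-cond}: for any $q_h \in Q_h$, pick $v_h \in V_h$ realising the sup, write $\overline\beta\norm{p_h - q_h}_{L^2(\Omega)} \leq \sup_{v_h} \skp{\divergence v_h}{p_h - q_h}/\norm{\nabla v_h}$, expand $\skp{\divergence v_h}{p_h}$ using the discrete momentum equation and $\skp{\divergence v_h}{p}$ using the continuous one, and bound the resulting terms by $\nu\norm{\nabla(u - u_h)}_{L^2(\Omega)}$ and $\norm{p - q_h}_{L^2(\Omega)}$; a triangle inequality in $p$ and insertion of the first estimate gives the stated bound. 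The main subtlety — and the only place the Scott--Vogelius-specific structure is essential rather than cosmetic — is the double use of the pressure space $Q_h = \divergence X_{h,\sim}$: it must be large enough that $\divergence v_h \perp Q_h$ kills $\skp{p_h}{\divergence v_h}$ for the \emph{inhomogeneous} discrete velocity (not just for homogeneous ones), which is precisely why Assumption~\ref{assumpt:pressure-sp} is invoked; everything else is the routine argument of Theorem~\ref{thm:quasiopt-mixed} with the pressure term absent.
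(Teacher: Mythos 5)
Your proposal is correct and follows essentially the same route as the paper: Galerkin orthogonality tested against $V_{h,\divergence}$ with both pressure terms vanishing thanks to the exact divergence constraint, removal of the divergence and trace constraints from the best-approximation error via the Fortin correction $z_h = v_h + \Pi_h(u - v_h)$ exactly as in the proof of Theorem~\ref{thm:quasiopt-mixed}, and the standard inf-sup argument for the pressure. The only slip is the incidental claim that $u_h - g_h \in V_{h,\divergence}$ (it lies in $V_h$, but need not be discretely divergence-free since $g_h$ is not), which, however, plays no role in your argument.
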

\begin{proof}
	The proof follows analogously to the one of Theorem~\ref{thm:quasiopt-mixed}. The pressure term vanishes due to the exact divergence constraint. Using Galerkin orthogonality we arrive at  
	\begin{align*}
		\norm{\nabla u - \nabla u_h}_{L^2(\Omega)}  \leq 2 \inf_{\substack{z_h \in X_{h,\sim,\divergence}\\ \tr(z_h)=\tr(g_h)}}  	\norm{\nabla u - \nabla z_h}_{L^2(\Omega)}.
	\end{align*} 
	As above we employ the Fortin operator to remove the constraints in the infimum which proves the claim. 
	The pressure estimate is obtained with standard arguments involving the inf-sup stability. 
\end{proof}

Note that the estimate for the velocity is independent of the pressure, i.e., it is pressure-robust.

\subsection{Practical aspects}\label{sec:practical_aspects}

We now consider the Stokes problem in  $d = 2$ dimensions. 
In the following, we investigate two aspects relevant for the successful implementation of the Scott--Vogelius method. 

First, in Subsection~\ref{sec:mesh_mod} we employ a local mesh modification as proposed in~\cite{Ainsworth2022}.  The objective is to ensure Assumption~\ref{assumpt:pressure-sp} and to improve the inf-sup constant. 
The second one in Section~\ref{sec:compat} addresses the question of enforcing the compatibility condition on the boundary data. This is achieved similarly as in~\cite{HeisterRebholzXiao2016}. 
An alternative approach to handle problems with inf-sup stability by means of postprocessing is presented in \cite{Park2020}. 

\subsubsection{Local mesh modification}\label{sec:mesh_mod}
 
 To ensure that Assumption~\ref{assumpt:pressure-sp} is satisfied, for the numerical implementation we modify the triangulations such that they do not contain any singular vertices. Then, Assumption~\ref{assumpt:pressure-sp} is indeed satisfied.  
 Furthermore, it is known that the inf-sup constant is very small, if nearly singular vertices exist in the triangulation. 
 This can also be circumvented by a local mesh modification. 
 Note that this is already of interest for homogeneous Dirichlet boundary conditions. 
 Such local mesh modifications have been suggested in~\cite[Rmk.~2]{Ainsworth2022}. 
 
 To make the above precise, let us recall the notion of singular vertices in a triangulation, see~\cite{GuzmanScott2019}. For a vertex $z$ in a triangulation $\mathcal{T}$ let $T_1, \ldots, T_N \in \mathcal{T}$ be its $N\in\N$ adjacent triangles. If $z$ is a boundary vertex, we assume $T_1$ and $T_N$ to have a boundary edge. We enumerate the triangles such that $T_i$ and  $T_{i+1}$ share an edge for each $i \in {1, \ldots, N-1}$ and that $T_1$ and $T_N$ share an edge if $z$ is an interior vertex. 
 Let $\theta_i \in (0,2\pi)$ denote the interior angles at $z$ in $T_i$, for each $i \in \{1, \ldots, N\}$ and we set 
 \begin{align*}
 	\theta(z)&\coloneqq \begin{cases}
 		\max \{|\sin(\theta_1 + \theta_2)|, \dots, |\sin(\theta_{N-1} + \theta_N)|, |\sin(\theta_N + \theta_1)|\} & \text{if } z\in\Omega,\\
 		\max \{|\sin(\theta_1 + \theta_2)|, \dots, |\sin(\theta_{N-1} + \theta_N)|\} & \text{if }z\in\partial\Omega.
 	\end{cases}
 \end{align*}
 Then the set of \emph{singular vertices} in $\mathcal{T}$ is defined as
 \begin{align*}
 	\mathcal{S}(\mathcal{T}) &\coloneqq  \{ z
 	\in \mathcal{V}(\mathcal{T})\colon \theta(z) = 0\},
 \end{align*}
 where $\mathcal{V}(\mathcal{T})$ denotes the set of all vertices in $\mathcal{T}$. 
 Note that for interior singular vertices the adjacent edges are contained in exactly two straight lines. For boundary singular vertices the same is true, if all adjacent edges are extended to $\Omega^c$. 
 
Note that the inf-sup constant is affected by  $\theta_{\min}\coloneqq \min_{z\not\in \mathcal{S}(\mathcal{T})} \theta(z)$, and vertices $z$ with small $\theta(z)$ are referred to as \emph{nearly singular vertices}. 
It has been proved in~\cite{GraessleBohneSauter2024} that the inf-sup constant is small if $\theta_{\min}$ is small. When setting the parameter $\eta=0$ in their method it reduces to the Scott--Vogelius method. More precisely their Theorems 2 and 6 ensure the following equivalence
\begin{align*}
	c\theta_{\min} \leq \overline{\beta} \leq C \theta_{\min},
\end{align*}
where $c, C>0$ depend exclusively on the shape-regularity of the mesh and on $\Omega$.

The idea of the local mesh modification is simple but very effective. The triangulation is searched for \emph{possibly singular vertices}, i.e., 
\begin{enumerate}[label = (\alph*)]
	\item  interior vertices if they have $4$ adjacent triangles and 
	\item boundary vertices $z$, if they have
	\begin{align*}
		\begin{cases}
\text{$\leq2$ adjacent triangles} \quad \quad & \text{if $\Omega$ is convex at $z$},\\
 \text{$\leq3$ adjacent triangles} & \text{if $\Omega$ is not convex at $z$. }
		\end{cases}·
	\end{align*} 
\end{enumerate} 
Evidently, this is a larger set of vertices than the set of (nearly) singular vertices, but it allows us to avoid investigating all angles in the triangulation. 

In a second step we refine the triangulation locally in neighbourhoods of the possibly singular vertices: 
As in~\cite{Ainsworth2022} in both cases (a) and (b) we employ a local barycentric split of the adjacent triangles. 
This makes the respective vertex non-(nearly)-singular, and does not generate new (nearly) singular vertices. 
Starting from an original mesh $\tria_h$ this modified mesh will be denoted by $\tilde{\tria}_h$.

\subsubsection{Compatibility}\label{sec:compat}

Recall that $g \in \Hsim$ is given and that it is necessary that $g_h \in X_{h,\sim}$ in order to have an exactly divergence-free velocity solution $u_h \in X_{h,\sim,\divergence} \subset \Hdiv$. 
Specifically, for $g_h$ we need to ensure that
\begin{align}\label{eq:comp-gh}
	\int_{\partial \Omega} \tr(g_h) \cdot n \dsig= 0.
\end{align} 
Already in~\cite{HeisterRebholzXiao2016} the necessity of this constraint has been stated and a way to achieve it has been suggested.
Therein, a nodal interpolation operator mapping to $\tr(\mathcal{L}^1_{k}(\mathcal{T}_h)^d)$ is modified on each edge locally, in a way that all  nodal values except of one edge interior nodal value is preserved. 
One edge interior nodal value is replaced by preserving only the tangential component of the nodal value as well as the mean normal trace over the respective edge. 
 
\subsubsection*{Modification on a single edge} 

To obtain an exactly divergence-free approximation $u_h$ to the Stokes equation, we require that $g_h \in X_{h,\sim}$. 
To ensure this in the implementation for given data $g \in \Hsim\cap C(\overline{\Omega})^d$ in the Stokes problem~\eqref{eq:Stokes-weak-inh} we choose 
\begin{align}\label{eq:gh-choice}
	g_h = \tilde I_h g,
\end{align}
where $\tilde I_h \colon C(\overline \Omega)^d \to X_{h}$ satisfies~\ref{itm:corSZ_trace}--\ref{itm:corSZ_stab} in Assumption~\ref{assumpt:corSZ}. 
Let $L_h \colon C(\overline \Omega)^d \to X_{h}$ be the Lagrange interpolation operator applied componentwise. 
 Furthermore, we choose a single boundary face $f$ of $\mathcal{T}_h$ and we denote by $T_f$ the simplex in $\mathcal{T}_h$ such that $f \subset T_f$. 
 Let $b_f \in \mathcal{P}_{d}(\mathcal{T})$ be the normalised scalar  boundary face bubble with $\support(b_f) \subset T_f$ and let $n_f$ denote the outer (with respect to $\partial\Omega$) unit normal vector on $f$. 
 Then, for any $v \in C(\overline \Omega)^d$ we define  
 \begin{align}\label{def:tilde-Ih}
 	\tilde I_h v =  L_h v - c_f b_f n_f \qquad \text{ with }   c_f \coloneqq \left(\int_{f} b_{f} \dsig\right)^{-1} \int_{\partial\Omega} (L_h v  - v) \cdot n \dsig. 
 \end{align}
It is straightforward to check that $\tilde I_h v \in X_{h,\sim}$, if $v \in \Hsim \cap C(\overline{\Omega})^d$. 
In this case $c_f$ simplifies to 
\begin{align}
	  c_f \coloneqq \left(\int_{f} b_{f} \dsig\right)^{-1} \int_{\partial\Omega} L_h v  \cdot n \dsig
\end{align}
and $\tilde I_h v$ can be computed without needing to integrate $v$ exactly. 
Furthermore, with standard arguments one can show that 

\begin{align*}
	\norm{c_f b_f n_f}_{L^2(\Omega)} \lesssim \norm{v - L_h v}_{L^2(\Omega)} + h_T \norm{\nabla (v-L_h v)}_{L^2(\Omega)},
\end{align*}
for any $v\in C(\overline{\Omega})^d$. 
Then, by the stability and approximation property of $L_h$ the claim follows. 

The only difference from $I_h$ satisfying  Assumption~\ref{assumpt:corSZ} is the fact that $\tilde I_h$ is not defined on $H^1(\Omega)^d$, but on $C(\overline \Omega)^d$. 
In comparison to Proposition~\ref{prop:SV-op} here an interpolation operator is used instead of a quasi-interpolation operator, and the correction is done on a single face. 
In the numerical examples below we choose $g_h$ as in \eqref{eq:gh-choice}. 

\subsection{Numerical experiments for Scott--Vogelius method}\label{sec:NumExp_SV}
We now present some numerical tests in 2D performed with the legacy version \texttt{2019.2.0.dev0} of  FEniCS~\cite{AlnaesBlechtaHakeEtAl2015,LoggMardalWells2012}. 
For the solution of linear systems the solver mumps~\cite{Amestoy2001} is used and the plots are produced using matplotlib~\cite{Hunter2007}. The full code is available at Zenodo~\cite{zenodo}.

\subsubsection*{Manufactured solution}

For $\psi(x) =\sin(4\pi |x|^2)$ for $x \in \setR^2$, and for a parameter  $\Ra>0$ we consider the divergence-free function $u$ and the pressure function $p$ given by
\begin{align}\label{eq:exact_sol} 
	u(x)& = \curl \psi(x) = 8\pi \cos(4\pi|x|^2) \begin{pmatrix} x_2 \\ - x_1 \end{pmatrix}  \quad \text{ and } \quad 
	p(x) =10 \sin(\tfrac{\pi}{40}x_1)\sin(\tfrac{\pi}{20}x_2) \Ra.
\end{align}
Here $\curl$ denotes the 2D vector curl operator $\operatorname{curl}\psi=\begin{pmatrix}
	\partial_{x_2} \psi , -\partial_{x_1} \psi\end{pmatrix}^\top$. As domain we choose $\Omega =  (0,1)^2$.
The pair of functions $(u,p)$ forms a weak solution to the inhomogeneous Stokes problem~\eqref{eq:Stokes-weak-inh} for $\nu = 1$ and  $F \coloneqq -\nu\Delta u + \nabla p$.
The boundary data is chosen as $g\coloneqq u\in \Hsim\cap C(\overline{\Omega})^2$ where only $\tr(g) = \tr(u)$ on $\partial\Omega$ enters the problem and the numerical method. 
The choices of $\mathrm{Ra} $ are specified in the numerical experiments below.

\subsubsection*{Discretisation} 
We choose quasiuniform triangulations with approximately $N\in \{4, 8, 16, 32,$\\$64,128\}$ triangles per direction constructed with the FEniCS built-in mesh generator\\ \texttt{generate\_mesh(domain, N)}.
Then, the meshes are modified as described in Section~\ref{sec:mesh_mod}, which ensures that there are no singular vertices in the mesh. In this case thanks to~\cite[p.~517]{GuzmanScott2019} we have
\begin{align}\label{eq:press-char}
	Q_h= \divergence X_{h,\sim} = \divergence V_h = \mathcal{L}^0_{k-1}(\mathcal{T}_h) \cap L^2_0(\Omega).
\end{align}
All experiments are conducted with polynomial degree $k = 4$, i.e., we have $X_h = \mathcal{L}^1_{4}(\mathcal{T}_h)^2$, see~\eqref{def:Xh-SV}, and $Q_h = \mathcal{L}^0_{3}(\mathcal{T}_h)\cap L^2_0(\Omega)$. 

For the discrete problem~\eqref{eq:Stokes-mixed-inh} we impose that $\tr(u_h)=\tr(g_h)$ using the FEniCS routine \verb|DirichletBC|, instead of reducing the problem to the corresponding homogeneous one.
 We investigate the cases 
\begin{center}
 (a) \;\; $g_h = \tilde I_h g$ \qquad and  \qquad\qquad (b) \;\; $g_h = L_h g$,
\end{center}
with $\tilde I_h$ as in~\eqref{def:tilde-Ih}, and $L_h$ the Lagrange interpolation. 
Then the mixed problem for the Scott--Vogelius element is solved in FEniCS. 

\subsubsection{Convergence and compatibility}

We investigate the error of the exact solution \eqref{eq:exact_sol} and the discrete solution to the inhomogeneous Stokes problem with the Scott--Vogelius element. 
Specifically, the effect of the choice of $g_h$, as $\tilde I_h g$ and of $L_h g$ is examined, as shown in Tables~\ref{tbl:SV_direct_withbdcor} and~\ref{tbl:SV_direct_nobdcor}, respectively. 
Recall that the first choice ensures compatibility~\eqref{eq:comp-gh} with the exact divergence constraint $\divergence u_h = 0$, whereas the latter does not.  
This is reflected by the fact that the divergence constraint is not satisfied exactly for $g_h = L_h g$, see Table~\ref{tbl:SV_direct_nobdcor}, which is particularly visible for coarser meshes.
For example for parameter $N=16$ we have $\norm{\divergence u_h}_{L^2(\Omega)}=$3.98e-05 for $g_h = L_h g$, see Table~\ref{tbl:SV_direct_nobdcor}, in comparison to $\norm{\divergence u_h}_{L^2(\Omega)}=$6.60e-11 when the boundary interpolation is corrected by using $g_h = \tilde I_h g$, see Table~\ref{tbl:SV_direct_withbdcor}.
As expected, as the mesh parameter decreases, the divergence norm of $u_h$ reduces. 
Note that not using the corrected boundary interpolation \eqref{def:tilde-Ih} does not necessarily imply a high value in the $L^2$-norm of the discrete divergence, but there is no guarantee for it to be small. The $L^2$-error of the divergence is trivially bounded above by the $H^1$-norm of the velocity error and hence reduces as $N$ increases, for sufficiently large $N$. 

\begin{table}[h]
	\begin{tabular}{rccccc}
		\toprule
		\;N & $h$\; & \;$\norm{u - u_h}_{L^2(\Omega)}$ \;& \;$\norm{u - u_h}_{H^1(\Omega)}$ \;& \;$\norm{\divergence u_h}_{L^2(\Omega)}$\; & \;$\norm{p - p_h}_{L^2(\Omega)}$ \;\\
		\midrule
		8 & 1.58e-01 &    1.30e-01 &    1.22e+01  & 2.89e-11   &      3.59e+01 \\
		16 & 8.24e-02  &   2.95e-03  &   5.78e-01  & 6.60e-11   &      1.72e+00\\
		32 & 3.98e-02  &   1.66e-04   &  5.67e-02 &  4.71e-08   &      2.00e-01\\
		64 & 2.06e-02   &  3.80e-06  &  2.85e-03  & 4.38e-09      &   9.13e-03\\
		128 & 9.94e-03  &   9.52e-08  &   1.52e-04 &  1.26e-09    &     3.95e-04\\
		\bottomrule
	\end{tabular}	
	\caption{
		Error norms for 	Scott--Vogelius discrete solutions $(u_h, p_h)$ to~\eqref{eq:Stokes-mixed-inh} with $k = 4$ and $g_h = \tilde I_h g$, and manufactured solution $(u,p)$~\eqref{eq:exact_sol} to the Stokes problem~\eqref{eq:Stokes-weak-inh}, with $\Ra = 1$ on the modified mesh $\tilde{\tria}_h$, cf. Section~\ref{sec:mesh_mod}. 
	}		
	\label{tbl:SV_direct_withbdcor}
\end{table}

\begin{table}[h]
	\begin{tabular}{rccccc}
		\toprule
		\;N & $h$ \;& \;$\norm{u - u_h}_{L^2(\Omega)}$ \;& \;$\norm{u - u_h}_{H^1(\Omega)}$ \;& \;$\norm{\divergence u_h}_{L^2(\Omega)}$\; & \;$\norm{p - p_h}_{L^2(\Omega)}$ \;\\
		\midrule
		 8 & 1.58e-01  &   1.30e-01   &  1.22e+01 &  2.88e-04  &       3.59e+01 \\
		 16 &  8.24e-02  &   2.95e-03  &   5.78e-01  & 3.98e-05   &      1.72e+00 \\
		 32 & 3.98e-02   &  1.70e-04   &  5.67e-02 &  1.55e-03    &     2.00e-01 \\
		 64 & 2.06e-02  &   3.80e-06   &  2.85e-03 &  1.98e-06    &     9.13e-03 \\
		 128 & 9.94e-03 &    9.52e-08  &   1.52e-04 &  4.04e-08    &    3.95e-04 \\
		\bottomrule
	\end{tabular}
	\caption{Error norms for 	Scott--Vogelius discrete solutions $(u_h, p_h)$ to~\eqref{eq:Stokes-mixed-inh} with $k = 4$ and $g_h = L_h g$, and manufactured solution $(u,p)$~\eqref{eq:exact_sol} to the Stokes problem~\eqref{eq:Stokes-weak-inh}, with $\Ra = 1$ on the modified mesh $\tilde{\tria}_h$, cf. Section~\ref{sec:mesh_mod}. 
	}
	\label{tbl:SV_direct_nobdcor}
\end{table}

\subsubsection{Pressure robustness}\label{sec:press-rob}

Here, we verify the pressure robustness property of the Scott--Vogelius element for the Stokes problem with inhomogeneous Dirichlet boundary conditions when employing the compatible datum $g_h = \tilde I_h g$. 
We compare it to the Taylor--Hood pair of finite element spaces consisting of $X_h = \mathcal{L}^1_4(\tria_h)^2$ and  $Q_h=\mathcal{L}^1_{3}(\tria_h)$, which is known to be not pressure-robust. 

We use the manufactured solution $(u,p)$ to the Stokes problem~\eqref{eq:Stokes-weak-inh} given in~\eqref{eq:exact_sol}. 
This extends Example 1.1 in~\cite{John2017} from homogeneous to inhomogeneous boundary conditions. We consider the parameters~$\Ra \in \{10^{11}, 10^{12}, 10^{13}\}$ scaling the given function $F$. 
For the exact solutions the parameter $\Ra$ only affects the pressure $p$ and not the velocity, see~\eqref{eq:exact_sol}.

Pressure robustness is achieved by the discrete solution computed by the Scott--Vogelius method: the velocity error remains almost the same for all values of $\Ra$, while the pressure error scales with $\Ra$ as expected. 
In contrast, the Taylor--Hood finite element pair yields a higher discrete velocity error that scales with the pressure error, see Figure~\ref{fig:SV_TH_direct_pressure_robustness_loglog}. 

\begin{figure}
	\begin{subfigure}[b]{0.39\textwidth}
		\begin{tikzpicture}[scale=0.75]
			
			\definecolor{crimson2143940}{RGB}{214,39,40}
			\definecolor{darkgray176}{RGB}{176,176,176}
			\definecolor{darkorange25512714}{RGB}{255,127,14}
			\definecolor{darkturquoise23190207}{RGB}{23,190,207}
			\definecolor{forestgreen4416044}{RGB}{44,160,44}
			\definecolor{goldenrod18818934}{RGB}{188,189,34}
			\definecolor{gray127}{RGB}{127,127,127}
			\definecolor{lightgray204}{RGB}{204,204,204}
			\definecolor{mediumpurple148103189}{RGB}{148,103,189}
			\definecolor{orchid227119194}{RGB}{227,119,194}
			\definecolor{sienna1408675}{RGB}{140,86,75}
			\definecolor{steelblue31119180}{RGB}{31,119,180}
			
			\begin{axis}[
				name=mainplot,
				log basis x={2},
				log basis y={10},
				tick align=outside,
				tick pos=left,
				x grid style={darkgray176},
				xlabel={N},
				xmajorgrids,
				xmin = 4,
				xmax = 64,
				xmode=log,
				xtick style={color=black},
				xtick={4,8,16,32,64},
				y grid style={darkgray176},
				ymajorgrids,
				ymin=0.0011698876090081, ymax=2e+04,
				ymode=log,
				ytick style={color=black},
				ytick={0.0001,0.001,0.01,0.1,1,10,100,1000,10000,100000,1000000,10000000}
				]
				\addplot [thick, forestgreen4416044]
				table {%
					4 103.459360895077
					8 12.2262294884181
					16 0.578630735408128
					32 0.056693321245978
					64 0.00285469236380621
				};
				\addplot [thick, forestgreen4416044, dash pattern=on 1pt off 3pt on 3pt off 3pt]
				table {%
					4 257.102978680948
					8 37.7178110775521
					16 1.87352388530219
					32 0.205477874184573
					64 0.00961764993914605
				};
				\addplot [thick, mediumpurple148103189]
				table {%
					4 103.459589928574
					8 12.2262243744914
					16 0.578630323990959
					32 0.0569796854833243
					64 0.00286818739580242
				};
				\addplot [thick, mediumpurple148103189, dash pattern=on 1pt off 3pt on 3pt off 3pt]
				table {%
					4 1611.67548578056
					8 120.30059775616
					16 7.57964558314424
					32 0.521666470077306
					64 0.0316501104456925
				};
				\addplot [thick, darkorange25512714]
				table {%
					4 103.461976556163
					8 12.2261883831231
					16 0.578636982595379
					32 0.0743662645281142
					64 0.00374072139071592
				};
				\addplot [thick, darkorange25512714, dash pattern=on 1pt off 3pt on 3pt off 3pt]
				table {%
					4 15991.7810431442
					8 1148.66486135905
					16 73.8345759476428
					32 4.81488521746653
					64 0.303058981680196
				};
			\end{axis}
				\definecolor{crimson2143940}{RGB}{214,39,40}
\definecolor{darkgray176}{RGB}{176,176,176}
\definecolor{darkorange25512714}{RGB}{255,127,14}
\definecolor{darkturquoise23190207}{RGB}{23,190,207}
\definecolor{forestgreen4416044}{RGB}{44,160,44}
\definecolor{goldenrod18818934}{RGB}{188,189,34}
\definecolor{gray127}{RGB}{127,127,127}
\definecolor{lightgray204}{RGB}{204,204,204}
\definecolor{mediumpurple148103189}{RGB}{148,103,189}
\definecolor{orchid227119194}{RGB}{227,119,194}
\definecolor{sienna1408675}{RGB}{140,86,75}
\definecolor{steelblue31119180}{RGB}{31,119,180}
\begin{axis}[
	at={(mainplot.outer north east)}, 
	anchor=north west,
	xshift=3.5cm, 
	yshift=-1cm,
	hide axis,
	scale only axis,
	width=0pt,
	height=0pt,
	xmin=0, xmax=1, ymin=0, ymax=1,
	legend columns=1,
	legend style={
		font=\small,
		row sep=2pt,
		draw=lightgray204,
		draw = none,
	}
	]
	
	\addplot[black, thick] coordinates {(0,0)};
	\addlegendentry{$\norm{\nabla(u-u_h)}_{L^2(\Omega)}$}
	
	\addplot[black, dash pattern=on 1pt off 3pt on 3pt off 3pt, thick] coordinates {(0,0)};
	\addlegendentry{$\norm{p-p_h}_{L^2(\Omega)}$}
	
	\addplot[forestgreen4416044, thick] coordinates {(0,0)};
	\addlegendentry{Ra = $10^{11}$}
	
	\addplot[mediumpurple148103189, thick] coordinates {(0,0)};
	\addlegendentry{Ra = $10^{12}$}
	
	\addplot[darkorange25512714, thick] coordinates {(0,0)};
	\addlegendentry{Ra = $10^{13}$}

\end{axis}
		\end{tikzpicture}

		\caption{\centering Scott--Vogelius}
		
	\end{subfigure}
	\hfill
	\begin{subfigure}[b]{0.39\textwidth}
		\begin{tikzpicture}[scale=0.75]
			
			\definecolor{crimson2143940}{RGB}{214,39,40}
			\definecolor{darkgray176}{RGB}{176,176,176}
			\definecolor{darkorange25512714}{RGB}{255,127,14}
			\definecolor{darkturquoise23190207}{RGB}{23,190,207}
			\definecolor{forestgreen4416044}{RGB}{44,160,44}
			\definecolor{goldenrod18818934}{RGB}{188,189,34}
			\definecolor{gray127}{RGB}{127,127,127}
			\definecolor{lightgray204}{RGB}{204,204,204}
			\definecolor{mediumpurple148103189}{RGB}{148,103,189}
			\definecolor{orchid227119194}{RGB}{227,119,194}
			\definecolor{sienna1408675}{RGB}{140,86,75}
			\definecolor{steelblue31119180}{RGB}{31,119,180}
			
			\begin{axis}[
				legend cell align={left},
				legend style={fill opacity=0.8, draw opacity=1, text opacity=1, draw=lightgray204},
				log basis x={2},
				log basis y={10},
				tick align=outside,
				tick pos=left,
				x grid style={darkgray176},
				xlabel={N},
				xmajorgrids,
				xmin = 4,
				xmax = 64,
				xmode=log,
				xtick style={color=black},
				xtick={4,8,16,32,64},
				y grid style={darkgray176},
				ymajorgrids,
				ymin=0.00119464184872399, ymax=2e+04,
				ymode=log,
				ytick style={color=black},
				ytick={0.0001,0.001,0.01,0.1,1,10,100,1000,10000,100000,1000000,10000000}
				]
				\addplot [thick, forestgreen4416044]
				table {%
					4 132.238016134766
					8 11.0023583004681
					16 0.688727871623153
					32 0.049180983434019
					64 0.00296231370660114
				};
				\addplot [thick, forestgreen4416044, dash pattern=on 1pt off 3pt on 3pt off 3pt]
				table {%
					4 230.188680432167
					8 16.5400227773952
					16 1.08140643579353
					32 0.0706061080534323
					64 0.00444880538943585
				};
				\addplot [thick, mediumpurple148103189]
				table {%
					4 1160.41529447223
					8 86.4690278325173
					16 5.7951009610878
					32 0.380284747834067
					64 0.0242106306147037
				};
				\addplot [thick, mediumpurple148103189, dash pattern=on 1pt off 3pt on 3pt off 3pt]
				table {%
					4 2288.47920094577
					8 164.187849634634
					16 10.7817668641559
					32 0.70386462314792
					64 0.0443886550530257
				};
				\addplot [thick, darkorange25512714]
				table {%
					4 11589.4803210306
					8 862.00085435514
					16 57.8312574836036
					32 3.790031353455
					64 0.241504052230258
				};
				\addplot [thick, darkorange25512714, dash pattern=on 1pt off 3pt on 3pt off 3pt]
				table {%
					4 22882.3237792477
					8 1641.66639343867
					16 107.812914626928
					32 7.03841430068553
					64 0.443875376269867
				};

			\end{axis}
		\end{tikzpicture}

		\caption{\centering Taylor--Hood}
	\end{subfigure}
	\hfill
	\caption{For different values $\Ra\in\{10^{11}, 10^{12}, 10^{13}\}$ plot of the error norms for Scott--Vogelius and Taylor--Hood discrete solutions $(u_h, p_h)$ on the modified mesh $\tilde{\tria}_h$, cf. Section~\ref{sec:mesh_mod}, respectively, to \eqref{eq:Stokes-mixed-inh} with $k=4$, $g_h=\tilde{I}_h g$ and manufactured solution $(u,p)$~\eqref{eq:exact_sol} to the Stokes problem \eqref{eq:Stokes}.}
	\label{fig:SV_TH_direct_pressure_robustness_loglog}
\end{figure}

\section{Iterated penalty method}\label{sec:ipm}

As mentioned before, the Scott--Vogelius element yields exactly divergence-free velocity approximations. This is achieved by choosing the pressure space as the divergence of the velocity space. 
The difficulty, however, lies in the characterisation and implementation of the pressure space, which depends on the geometry of the mesh. 
Indeed, the pressure space may be smaller than $\mathcal{L}^0_{k-1}(\mathcal{T}_h)$ due to the presence of singular vertices, see~\cite{GuzmanScott2019}. 

A customary remedy consists in using an Uzawa-type penalisation method based on an iteration. 
The velocity and the pressure approximations are updated in turn and the divergence is penalised. 
Such methods can be applied to general inf-sup stable mixed finite element spaces, see~\cite[Ch.~7.2]{BernardiGiraultRaviartHechtRiviere2024} for a version for inhomogeneous Dirichlet boundary conditions. 

\subsubsection*{Iteration for general mixed finite element pair}
Before giving an outline of the remaining section, let us introduce the Uzawa-type methods.
Let $(X_h, Q_h)$ be a pair of finite element spaces as defined in~\eqref{def:Xh}, \eqref{def:Qh}, let $V_h = X_h\cap H_0^1(\Omega)^d$ as in~\eqref{def:Vh}. Furthermore, let $F \in H^{-1}(\Omega)^d$ and let $g_h \in X_{h} $ be a suitable approximation of $g \in \Hsim$.  
For the Uzawa method, to approximate the solution $(u_h,p_h) \in X_h \times Q_h$ to the Stokes problem we choose parameters $\rho_u\geq 0$, $\rho_p>0$ and set $p_{h,0} \coloneqq 0$.
In the $i$th iteration, for $i \in \mathbb{N}$, and given $p_{h,i-1}\in Q_h$ we want to find $u_{h,i} \in X_h$ with $\tr(u_{h,i} ) = \tr(g_h)$ such that 
\begin{subequations}\label{eq:Uzawa}
	\begin{align}\label{eq:Uzawa-u}
		\nu \skp{\nabla u_{h,i}}{\nabla v_h} + \rho_u \skp{\divergence u_{h,i}}{\divergence v_h}  & = \skp{F}{v_h} + \skp{p_{h,i-1}}{\divergence v_h} \quad &&\text{for all } v_h \in V_h,
		\end{align}
		and then we update $p_{h,i} \in Q_h$ by 
		\begin{align}
		\label{eq:Uzawa-p}
		\qquad\qquad \skp{p_{h,i}}{q_h} &=  \skp{p_{h,i-1}}{q_h} - \rho_p \skp{ \divergence u_{h,i}}{q_h}  \quad &&\text{for all } q_h \in Q_h.
	\end{align}
\end{subequations}
Using the Uzawa method, in each iteration a smaller system is solved compared to the mixed method~\eqref{eq:Stokes-mixed-inh}. Thanks to coercivity of the bilinear form representing the left-hand side in \eqref{eq:Uzawa-u}, well-posedness is available for any $\rho_u \geq 0$.

The special case of $\rho_u = 0$ leads to an Uzawa method for the standard Lagrangian, whereas $\rho_u>0$ corresponds to the case of an augmented Lagrangian, where the divergence is penalised. Note that the first velocity iterate $u_{h,1}$ corresponds to the solution to the Stokes equation with grad-div stabilisation~\cite{Franca1988}.
In the latter case, for any $0 < \rho_p< 2 \rho_u$ convergence of $\nabla u_{h,i} \to \nabla u_h$ and of $p_{h,i} \to p_h$ in $L^2(\Omega)$, as $i \to \infty$,  holds for $(u_h,p_h) \in X_h \times Q_h$ the solutions to~\eqref{eq:Stokes-mixed-inh}, see for example~\cite[Thm.~7.2.5]{BernardiGiraultRaviartHechtRiviere2024}. 
A common choice is $\rho_p = \rho_u > 0$, see, e.g.,~\cite[Sec.~3.2.5]{Thomasset1981} and~\cite{Ainsworth2023}, and in the following, we shall adopt this choice. 
Let us note, however, that better rates of convergence can be obtained by choosing parameter $\rho_p^i$ depending on the solution of the current iteration,
but this increases the computational cost, see~\cite[Ch.~2.3]{Fortin1983}, \cite[Alg.~7.2]{BernardiGiraultRaviartHechtRiviere2024}. 
For constant values (in the iteration), the choice  $\rho_p= \nu + \rho_u$ is optimal regarding convergence rate in the pressure error norm, see~\cite{Nochetto2004}. 
In this case, the contraction constant for the pressures is $(1 - \overline{\beta}^2)^{1/2}$ with $\overline{\beta}$ the inf-sup constant in~\eqref{def:discr-inf-sup-cond}, see also~\cite[ Remark 7.2.6]{BernardiGiraultRaviartHechtRiviere2024}. 
Note that, in general, \eqref{eq:Uzawa-p} cannot be used to eliminate $p_{h,i-1}$
in~\eqref{eq:Uzawa-u}. 
Hence, the general convergence results are based on a contraction in the pressure rather than in the velocity.   

In the following, we choose $\rho_u= \rho_p = \rho>0$. 
Since all of this holds for general inf-sup stable finite element pairs, it applies  in particular to the Scott--Vogelius element under the condition of inf-sup stability. 
\medskip 

For the special case of the Scott--Vogelius element the Uzawa method~\eqref{eq:Uzawa} is also referred to as \emph{iterated penalty method} (IPM), see~\cite[Sec.~13.1]{Brenner2008}. 
The advantage over the mixed formulation for the Scott--Vogelius element in~\eqref{eq:Stokes-mixed-inh} is that the pressure system \eqref{eq:Uzawa-p} reduces to an update and that one can work directly with divergence of velocity functions. Hence, no pressure basis is needed. 

As in Section~\ref{sec:SV} we consider for the velocity the spaces  
\begin{align*}
X_h = \mathcal{L}^1_{k}(\mathcal{T}_h)^d, \quad 	X_{h,\sim} = X_h\cap \Hsim,\quad V_h = X_h \cap H_0^1(\Omega)^d, 
\end{align*}
and thanks to Assumption~\ref{assumpt:pressure-sp} we have 
$$Q_h= \divergence V_ h = \divergence X_{h,\sim}.
$$
In this case~\eqref{eq:Uzawa-p} reduces to 
\begin{align}\label{eq:IPM-hom-p}
	p_{h,i} \coloneqq p_{h,i-1} - \rho_p  \divergence u_{h,i}.
\end{align}
As before we choose the discrete datum $g_h \in X_{h,\sim}$, i.e., it is compatible with the exact divergence constraint. 
Then, the Uzawa method in~\eqref{eq:Uzawa} reduces to the following. 

\subsubsection*{IPM for the Scott--Vogelius element} 
We consider a parameter $\rho>0$. Starting from $\oldphi_{h,0}=0$ in the $i$th iteration for given $\oldphi_{h,i-1}\in X_h$ we want to find 
$u_{h,i}\in X_{h,\sim}$ with $u_{h,i} = g_h$ on $\partial\Omega$ such that
\begin{subequations}\label{eq:ipm}
	\begin{equation}\label{eq:ipm-u}
		\begin{aligned}
			\nu \skp{\nabla u_{h,i}}{\nabla v_h} + \rho \skp{\divergence u_{h,i}}{\divergence v_h}  
			& = \skp{F}{v_h} + \skp{\divergence \oldphi_{h,i-1}}{\divergence v_h} \qquad \text{for all } v_h \in V_h, 
		\end{aligned}
	\end{equation}
	and update 
\begin{equation}\label{eq:ipm-phi}
	\oldphi_{h,i} \coloneqq \oldphi_{h,i-1} - \rho u_{h,i}.
\end{equation}	
\end{subequations}
Note that the iteration only involves velocity functions, with $u_{h,i}, \oldphi_{h,i} \in X_{h,\sim}$, and the pressure iterates can be determined by 
\begin{align}\label{eq:ipm-p}
	p_{h,i}\coloneqq \divergence \oldphi_{h,i}\in \divergence X_{h,\sim} = Q_h. 
\end{align}
For a given tolerance $\tol >0$ we use the stopping criterion
\begin{align}\label{eq:tol}
	\norm{\divergence u_{h,i} }_{L^2(\Omega)} < \tol. 
	\end{align}
Equivalently to~\eqref{eq:ipm}, one may solve an inhomogeneous problem in the first iteration, and then solve homogeneous problems in all subsequent steps by solving for $u_{h,i}-u_{h,i-1} \in V_h$ instead, see~\cite[Alg.~7.2]{BernardiGiraultRaviartHechtRiviere2024}. 
\subsubsection*{Context} 
Most results on the IPM~\eqref{eq:ipm} are treating the case of homogeneous Dirichlet boundary conditions, and convergence results are presented in~\cite{Brenner2008}, or for mixed boundary conditions with homogeneous Dirichlet boundary conditions on a part of the boundary and natural boundary conditions on the rest, see~\cite{Ainsworth2023}. 
The only exception we are aware of is~\cite{BernardiGiraultRaviartHechtRiviere2024} for an Uzawa algorithm~\eqref{eq:Uzawa} for general mixed finite element methods. 

For the case of homogeneous boundary conditions, the Uzawa algorithm~\eqref{eq:Uzawa} and the IPM~\eqref{eq:ipm} have been extended also to the incompressible Navier-Stokes equations, see~\cite{Chen2015} and~\cite{Geredeli2023}, respectively. 

\begin{remark}
	\begin{enumerate}[label = (\alph*)]
		\item (conditions on $g_h$)
		Recall that  the mixed method for the Scott--Vogelius element is well-posed for boundary data $g_h\in X_h$. 
		Hence, if exact divergence constraints are not of particular interest, the compatibility $g_h \in X_{h,\sim}$  is not needed.  
		But to ensure exact divergence constraints, we require $g_h \in X_{h,\sim}$, see Remark~\ref{rmk:ex-div}. 
		Since for the IPM (for the Scott--Vogelius element) the divergence is penalised and serves as stopping criterion, it is essential to choose the boundary data $g_h \in X_{h,\sim}$ in a compatible manner. 
	\item (constructive divergence-free extension)
		The IPM can be seen as a way to construct a divergence-free extension (sometimes also referred to as \emph{lifting operator}) of $\tr(g_h)$.  	
	\end{enumerate}
\end{remark}
In the following, we investigate the IPM for inhomogeneous boundary conditions. 
In Section~\ref{sec:IPM-conv}, we extend the convergence results in~\cite{Brenner2008} to inhomogeneous Dirichlet boundary conditions. This is not much more demanding than in the homogeneous case, but shall be presented for the sake of completeness. 
In combination with the quasi-optimality results obtained in Section~\ref{sec:SV} this allows us to derive asymptotically pressure-robust error estimates for the IPM solution in Section~\ref{sec:ipm-error}. Since the error estimates involve only best approximation errors they do not require higher regularity on the solutions. Due to the same norms appearing, they are quasi-optimal in $(u,p)$, and the lack of pressure robustness is quantified.  
Furthermore, the conformity with respect to the divergence allows us to show also monotonicity of the divergence norm in Section~\ref{sec:mon-div} employing energy arguments. 
Finally, in Section~\ref{sec:ipm-num-exp} we present numerical experiments confirming our theoretical results.  
\subsection{Convergence}
\label{sec:IPM-conv}

Since we have $\divergence X_{h,\sim} =Q_h$ for the Scott--Vogelius element, for the IPM convergence results based on a velocity contraction are possible, which we state for the readers convenience.
The extension from homogeneous to inhomogeneous Dirichlet boundary conditions follows from~\cite[Ch. 13.1]{Brenner2008} with minor modifications. 

Before stating the convergence result let us note that with $\overline \beta$  the inf-sup constant as in~\eqref{def:discr-inf-sup-cond} it has been shown in~\cite[Lem.~(12.5.10)]{Brenner2008}, see also~\cite[Sec. 3.5, eq.~(3.163)]{Elman2014}, that 
\begin{align}\label{eq:beta}
\overline \beta \norm{\nabla v_h}_{L^2(\Omega)} \leq  \norm{\divergence v_h}_{L^2(\Omega)}  
\quad \text{ for any } v_h \in V_{h,\divergence}^\perp,
\end{align}
with $V_{h,\divergence}^\perp \coloneqq \{v_h \in V_{h} \colon \skp{\nabla v_h}{\nabla w_h} = 0 \; \forall w_h \in V_{h,\divergence} \}$. 
In fact, this is a direct consequence of the stability estimate~\eqref{eq:Bog-h-stab} in combination with the fact that $Q_h = \divergence V_h$: Indeed, for $\xi_h \in V_{h,\divergence}^\bot$ we have $\divergence \xi_h \in Q_h$, and thus, by~\eqref{eq:bog-mixed} it follows that $\mathcal{B}_h(\divergence \xi_h) = \xi_h$. 
In combination with the fact $\skp{\divergence \mathcal{B}_h(\divergence \xi_h)}{q_h} =\skp{\divergence \xi_h}{q_h} $  for any $q_h \in Q_h$, and thus, $\divergence \mathcal{B}_h(\divergence \xi_h) =\divergence \xi_h$ the claim follows from~\eqref{eq:Bog-h-stab}. 
{\begin{lemma}[convergence of the IPM]\label{lem:conv_ipm}
Let~$\nu >0$, let $F \in H^{-1}(\Omega)^d$ be given. 
For some $k \in \mathbb{N}$ let $X_h, X_{h,\sim,\divergence},V_h, Q_h$ be the discrete spaces for the Scott--Vogelius element as in~\eqref{def:Xh-SV}, \eqref{def:Xhdiv-SV},~\eqref{def:Vh_SV} and~\eqref{def:Qh_SV}, let Assumption~\ref{assumpt:FEM} hold with inf-sup constant $\overline \beta>0$ and let Assumption~\ref{assumpt:pressure-sp} be satisfied.
For some $g_h \in X_{h,\sim}$ let $(u_h,p_h) \in X_{h,\sim,\divergence} \times Q_h \subset \Hdiv \times \Lmeanfree(\Omega)$ be the discrete solution to the discrete mixed problem~\eqref{eq:Stokes-mixed-inh}. 
Furthermore, for $\rho>0$ and $\oldphi_{h,0}=0$ let $(u_{h,i})_{i \in \mathbb{N}}, (\oldphi_{h,i})_{i \in \mathbb{N}} \subset  X_{h,\sim}$ be sequences of iterate solutions to the IPM~\eqref{eq:ipm}. 
Then, with 
$$\theta\coloneqq \frac{\nu}{\nu+\rho\overline \beta^2 }<1$$   
for any $i \in \mathbb{N}$ one has that
\begin{align} \label{est:IPM-conv-grad}
		\norm{\nabla u_h - \nabla u_{h,i}}_{L^2(\Omega)} 
		&\leq \theta^{i-1}~ \norm{\nabla u_h - \nabla u_{h,1}}_{L^2(\Omega)},
		 \\
		\label{est:IPM-conv-pi}
		 \norm{p_h - \divergence \oldphi_{h,i} }_{L^2(\Omega)} &\leq 
		 \tfrac{\nu + \rho \overline \beta \sqrt{d}}{\overline \beta} ~ \theta^{i-1} \,\norm{\nabla u_h - \nabla u_{h,1}}_{L^2(\Omega)}.
	\end{align}
\end{lemma}}
\begin{proof}
	
	\textit{1.~Step (contraction for velocity):}
Applying~\eqref{eq:Stokes-mixed-inh} and~\eqref{eq:ipm} the error  $e_{h,i}\coloneqq u_h - u_{h,i}$ satisfies 
	\begin{align}\label{eq:conv-1}
		\nu \skp{\nabla e_{h,i}}{\nabla v_h} + \skp{\divergence \oldphi_{h,i-1}-p_h}{\divergence v_h} - \rho \skp{\divergence u_{h,i}}{\divergence v_h} = 0 \quad \text{ for any } v_h\in V_h. 
	\end{align}
Since by~\eqref{eq:Stokes-mixed-inh} the function $u_h$ is discretely divergence-free, we obtain
	\begin{align}\label{eq:conv-2}
		\nu \skp{\nabla e_{h,i}}{\nabla v_h} + \skp{\divergence \oldphi_{h,i-1}-p_h}{\divergence v_h} + \rho \skp{\divergence e_{h,i}}{\divergence v_h} =0 \quad \text{ for any } v_h\in V_h. 
	\end{align} 
Now the proof proceeds as in the homogeneous case in \cite[Ch. 13.1]{Brenner2008}. 
Employing the update~\eqref{eq:ipm-phi}  $$\oldphi_{h,i-1}=\oldphi_{h,i-2}-\rho u_{h,i-1}$$ in~\eqref{eq:conv-2} and using~\eqref{eq:conv-1} shows that we have  for any $v_h\in V_h$ 
	\begin{align*}
		\nu \skp{\nabla e_{h,i}}{\nabla v_h}  + \rho \skp{\divergence e_{h,i}}{\divergence v_h} &= -\skp{\divergence \oldphi_{h,i-2}-p_h}{\divergence v_h} + \rho\skp{\divergence u_{h,i-1} }{\divergence v_h} \\
		&= \nu \skp{\nabla e_{h,i-1}}{\nabla v_h}.
	\end{align*}
	Since $\tr(u_h)=\tr(u_{h,i}) = \tr(g_h)$ we may choose  $v_h= e_{h,i} \in V_h$ as test function.  
	Noting that by~\eqref{eq:conv-1} we have that $e_{h,i} \in V_{h,\divergence}^\bot$, and thus, with~\eqref{eq:beta} we obtain
	\begin{equation}\label{eq:proof_conv_IPM}
	\begin{aligned}
		(\nu + \rho\overline \beta^2) \norm{\nabla e_{h,i}}_{L^2(\Omega)}^2  
		&\leq 	\nu \norm{\nabla  e_{h,i}}_{L^2(\Omega)}^2   + \rho  \norm{\divergence e_{h,i}}_{L^2(\Omega)}^2  \\
		&= \nu \skp{\nabla e_{h,i-1}}{\nabla e_{h,i}}
		 \leq \nu \norm{\nabla e_{h,i-1}}_{L^2(\Omega)}  \norm{\nabla e_{h,i}}_{L^2(\Omega)}.  
	\end{aligned}
	\end{equation}
	If $\norm{\nabla e_{h,i}}_{L^2(\Omega)} = 0$, then the claim holds trivially, and otherwise we have with $\theta =\frac{\nu}{\nu+\rho\overline \beta^2 }< 1 $ that 
	\begin{align}\label{eq:conv-3}
		\norm{\nabla e_{h,i}}_{L^2(\Omega)} \leq \theta \norm{\nabla e_{h,i-1}}_{L^2(\Omega)} \leq \theta^{i-1} \norm{\nabla e_{h,1}}_{L^2(\Omega)},
	\end{align}
	which shows the first claim. 
	
		\textit{2.~Step (pressure estimate):}
By~\eqref{eq:conv-2} for $i \geq 1$ we have  the relation 
 \begin{align}\label{eq:proof_conv_IPM_pi}
	\skp{\divergence \oldphi_{h,i-1}-p_h}{\divergence v_h} = -\nu \skp{\nabla e_{h,i}}{\nabla v_h} - \rho \skp{\divergence e_{h,i}}{\divergence v_h} \quad\text{for all } v_h\in V_h.
\end{align}
Recall from Assumption~\ref{assumpt:pressure-sp} that we have $\divergence X_{h,
	\sim} = \divergence V_h = Q_h$, hence we know that $\divergence \oldphi_{h,i-1} - p_h \in Q_h \subset L^2_0(\Omega)$. 
Hence, with the discrete \Bogovskii{} operator $\mathcal{B}_h\colon L^2_0(\Omega) \to V_h$ in Remark~\ref{rmk:FortinBogovskii}~\ref{itm:discrBog} the function $v_h =  \mathcal{B}_h(\divergence \oldphi_{h,i-1}-p_h)$ is an admissible test function in~\eqref{eq:proof_conv_IPM_pi}. 
Additionally using also the divergence preservation in the dual of $Q_h$ as in~\eqref{eq:Bog-h} with $\divergence \oldphi_{h,i} - p_h, \divergence e_{h,i} \in \divergence X_{h,\sim}  = Q_h$ yields 
\begin{equation}
\begin{aligned}\label{eq:conv-4}
	 \norm{\divergence \oldphi_{h,i-1}-p_h}_{L^2(\Omega)}^2   &= 	\skp{\divergence \oldphi_{h,i-1}-p_h}{\divergence \oldphi_{h,i-1}-p_h}\\
&	=
	-\nu \skp{\nabla e_{h,i}}{\nabla \mathcal{B}_h(\divergence \oldphi_{h,i-1}-p_h)}  
	- \rho \skp{\divergence e_{h,i}}{\divergence \oldphi_{h,i-1}-p_h}. 
\end{aligned}
\end{equation}
Then, applying the stability of $\mathcal{B}_h$ in~\eqref{eq:Bog-h-stab} it follows that 
	\begin{alignat*}{3}
	\norm{\divergence \oldphi_{h,i-1}-p_h}_{L^2(\Omega)}^2 
	&\leq &&~\nu  \norm{\nabla e_{h,i}}_{L^2(\Omega)} \norm{\nabla \mathcal{B}_h(\divergence \oldphi_{h,i-1}-p_h)}_{L^2(\Omega)} \\
	&&& + \rho \norm{\divergence e_{h,i}}_{L^2(\Omega)} \norm{\divergence \oldphi_{h,i-1}-p_h}_{L^2(\Omega)}\\
	&\leq &&~\tfrac{\nu}{\overline \beta}  \norm{\nabla e_{h,i}}_{L^2(\Omega)} \norm{\divergence \oldphi_{h,i-1}-p_h}_{L^2(\Omega)} \\
	&&& + \rho \norm{\divergence e_{h,i}}_{L^2(\Omega)} \norm{\divergence \oldphi_{h,i-1}-p_h}_{L^2(\Omega)}. 
\end{alignat*}
In the non-trivial case, applying~\eqref{eq:beta} and the estimate
\begin{align*}
	\norm{\divergence e_{h,i}}_{L^2(\Omega)} \leq \sqrt{d} \norm{\nabla e_{h,i}}_{L^2(\Omega)}
\end{align*}
yields with \eqref{eq:conv-3} that
\begin{equation}
\begin{aligned}
	\norm{\divergence \oldphi_{h,i-1}-p_h}_{L^2(\Omega)}
	& \leq
	\tfrac{\nu}{\overline \beta}  \norm{\nabla e_{h,i}}_{L^2(\Omega)}  + \rho \norm{\divergence e_{h,i}}_{L^2(\Omega)}\\
	& \leq \tfrac{\nu + \rho \overline \beta \sqrt{d}}{\overline \beta} \norm{\nabla e_{h,i}}_{L^2(\Omega)}
	\leq \tfrac{\nu + \rho \overline \beta \sqrt{d}}{\overline \beta}
	 \theta^{i-1}
	\norm{\nabla e_{h,1}}_{L^2(\Omega)}. 
\end{aligned}
\end{equation}
This shows the claim.  
\end{proof}
Besides showing convergence, Lemma~\ref{lem:conv_ipm} ensures that the iteration terminates when using the stopping criterion~\eqref{eq:tol}.

\begin{remark}
Note that the best possible contraction constant obtained in~\cite{Nochetto2004} and \cite[Rmk. 7.2.6]{BernardiGiraultRaviartHechtRiviere2024} is $1 - \overline \beta^2$. 
	This means that the rate obtained from the pressure based estimates cannot be better than $1 - \overline \beta^2$, which is close to one for rather small values of $\overline \beta$.
On the other hand, the contraction constant in Lemma~\ref{lem:conv_ipm} is $\frac{\nu}{\nu + \rho \overline \beta^2}$, which reduces to $\frac{1}{1 + \gamma \overline \beta^2}$ for $\rho = \gamma \nu$ for some $\gamma>0$. 
This factor can be arbitrarily small for  large $\rho$. Clearly, large values of $\rho$ are not recommended because it also affects the condition number. 
Even for moderate $\rho$ the contraction constant can be expected to be sharper, if $\overline \beta \ll 1$.  
Recall that for working in the velocity based formulation the property $\divergence X_{h,\sim} = Q_h$ is key. 
\end{remark}

\subsection{Asymptotically pressure-robust error estimates}
\label{sec:ipm-error}

In this section in Theorem~\ref{thm:ipm-quasi-opt} we present an error estimate for the IPM in terms of best approximation errors. 
It holds under minimal regularity assumptions and takes both the space discretisation and the iteration into account. 
It is quasi-optimal in the sense that the same norms occur on both sides if the velocity and pressure errors are considered together. 
Furthermore, it is asymptotically pressure-robust in the limit of high iteration numbers in the IPM. 
The effect of the iteration number on the lack of pressure robustness is quantified. 

This can be achieved by splitting the error between exact solution $u \in H^1(\Omega)^d$ to~\eqref{eq:Stokes-weak-inh} and IPM approximate solution $u_{h,i}\in X_h$ as in~\eqref{eq:ipm} into 
\begin{align}\label{eq:error-split}
	u-u_{h,i} &= u- u_h + u_h - u_{h,i},
\end{align}
where $u_h \in X_h$ is the solution to the mixed problem~\eqref{eq:Stokes-mixed-inh} via the Scott--Vogelius method. 
We use the quasi-optimality result for the mixed Scott--Vogelius method in Theorem~\ref{thm:SV-inhom} on $u - u_h$ and the convergence result for the IPM in Lemma~\ref{lem:conv_ipm} above to estimate $u_h - u_{h,i}$.
Then, it remains to further estimate the error of the first IPM iterate $u_h - u_{h,1}$. 

This corresponds to the estimate for the grad-div stabilisation; see e.g.~\cite[Thm.~1]{JenkinsJohnLinkeEtAl2014}. 

\begin{lemma}{(error estimate for first IPM iterate)}\label{lemma:quasi-opt_grad-div}
Let the assumptions of Theorem~\ref{thm:SV-inhom} be satisfied, i.e., in particular let $g_h \in X_{h,\sim}$ satisfy $\tr(g_h)=\tr(\Pi_h(g))$ for $\Pi_h$ as in Proposition \ref{prop:SV-op}. Furthermore, let the assumptions in  Lemma~\ref{lem:conv_ipm} hold and let $u_{h,1} \in X_{h,\sim}$ be the velocity function obtained in the first iteration of the IPM in~\eqref{eq:ipm}. 

Then, for the velocity error one has
 \begin{align*}
 	\norm{\nabla u - \nabla u_{h,1}}_{L^2(\Omega)} \leq c_1 \inf_{\substack{v_h\in X_h}} \norm{\nabla u -\nabla v_h}_{L^2(\Omega)} + c_2 \norm{p}_{L^2(\Omega)} ,
 \end{align*}
  with $c_1 = 2(1 + c_F)$, $c_2 = \frac{1}{\sqrt{2 \rho \nu}} $ and  $c_F>0$ is the constant in Assumption~\ref{assumpt:Fortin}~\ref{itm:stab}.
\end{lemma}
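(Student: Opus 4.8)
The plan is to recognise $u_{h,1}$ as the Galerkin approximation in $V_h = X_h \cap H_0^1(\Omega)^d$, with prescribed boundary values $\tr(g_h)$, of the grad--div stabilised bilinear form $a_\rho(w,v) \coloneqq \nu\skp{\nabla w}{\nabla v} + \rho\skp{\divergence w}{\divergence v}$, and to run a C\'ea/Strang-type argument, choosing the comparison function through the trace-preserving quasi-interpolation operator $I_h$ of Proposition~\ref{prop:SV-op}. The first step is the error relation: testing~\eqref{eq:Stokes-weak-inh} with $v_h \in V_h$, adding the vanishing term $\rho\skp{\divergence u}{\divergence v_h}$ (recall $\divergence u = 0$), and subtracting~\eqref{eq:ipm-u} for $i=1$ with $\oldphi_{h,0}=0$ gives
\begin{align*}
	a_\rho(u - u_{h,1}, v_h) = \skp{p}{\divergence v_h} \qquad \text{for all } v_h \in V_h.
\end{align*}

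Next, for an arbitrary $v_h \in X_h$ I would set $z_h \coloneqq v_h + I_h(u - v_h) \in X_h$. Using the discrete trace preservation of $I_h$ (Assumption~\ref{assumpt:corSZ}~\ref{itm:corSZ_trace-discr}) together with $\tr(u) = \tr(g)$ one checks $\tr(z_h) = \tr(I_h u) = \tr(I_h g) = \tr(g_h)$, so that $u_{h,1} - z_h \in V_h$ is an admissible test function; note that, unlike in Theorem~\ref{thm:SV-inhom}, $z_h$ need only reproduce the boundary data, not be discretely divergence-free, which is why $I_h$ rather than the Fortin operator $\Pi_h$ is used here (this keeps $\divergence z_h$ close to $\divergence v_h$, hence the $\norm{p + \rho\divergence v_h}$ structure on the right-hand side). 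Inserting $v_h = u_{h,1} - z_h$ into the error relation, writing $u - u_{h,1} = (u - z_h) - (u_{h,1} - z_h)$, and using $\divergence u = 0$ to collect the pressure and grad--div contributions yields
\begin{align*}
	\nu\norm{\nabla(u_{h,1} - z_h)}_{L^2(\Omega)}^2 + \rho\norm{\divergence(u_{h,1} - z_h)}_{L^2(\Omega)}^2 = \nu\skp{\nabla(u - z_h)}{\nabla(u_{h,1} - z_h)} - \skp{p + \rho\divergence z_h}{\divergence(u_{h,1} - z_h)}.
\end{align*}
Applying Young's inequality to the first term symmetrically and to the second with a weight chosen so that its quadratic part is entirely absorbed by the $\rho$-term on the left gives $\norm{\nabla(u_{h,1} - z_h)}_{L^2(\Omega)} \le \norm{\nabla(u - z_h)}_{L^2(\Omega)} + (2\nu\rho)^{-1/2}\norm{p + \rho\divergence z_h}_{L^2(\Omega)}$.

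Finally I would combine this with the triangle inequality $\norm{\nabla(u - u_{h,1})}_{L^2(\Omega)} \le \norm{\nabla(u - z_h)}_{L^2(\Omega)} + \norm{\nabla(u_{h,1} - z_h)}_{L^2(\Omega)}$ and substitute the interpolation bounds $\norm{\nabla(u - z_h)}_{L^2(\Omega)} \le (1 + c_I)\norm{\nabla(u - v_h)}_{L^2(\Omega)}$ together with $\norm{p + \rho\divergence z_h}_{L^2(\Omega)} \le \norm{p + \rho\divergence v_h}_{L^2(\Omega)} + \rho\norm{\divergence I_h(u - v_h)}_{L^2(\Omega)} \le \norm{p + \rho\divergence v_h}_{L^2(\Omega)} + \rho\, c_I\norm{\nabla(u - v_h)}_{L^2(\Omega)}$, using $\norm{\divergence w}_{L^2(\Omega)} \le \norm{\nabla w}_{L^2(\Omega)}$ and the stability of $I_h$ (Assumption~\ref{assumpt:corSZ}~\ref{itm:corSZ_stab}). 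Collecting the coefficients of $\norm{\nabla(u - v_h)}_{L^2(\Omega)}$ and of $\norm{p + \rho\divergence v_h}_{L^2(\Omega)}$ reproduces exactly $c_1 = 2(1 + c_I) + \sqrt{\rho/(2\nu)}\,c_I$ and $c_2 = (2\rho\nu)^{-1/2}$, and taking the infimum over $v_h \in X_h$ yields the claim. I expect the only genuinely delicate point to be the bookkeeping in the Young's inequality step --- in particular exploiting the grad--div contribution on the left-hand side to obtain the factor $(2\nu\rho)^{-1/2}$ rather than $(\nu\rho)^{-1/2}$; everything else is a routine C\'ea-type estimate requiring no tools beyond the operator $I_h$ already available from Proposition~\ref{prop:SV-op}.
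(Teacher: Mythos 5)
Your proposal is correct and follows essentially the same route as the paper's proof: the same error relation for the grad--div stabilised form, the same splitting $u - u_{h,1} = (u - z_h) - (u_{h,1} - z_h)$ with a test function in $V_h$, the same Young's-inequality bookkeeping absorbing the $\rho\norm{\divergence(\cdot)}^2$ term to produce the factor $(2\rho\nu)^{-1/2}$, and the same removal of the trace constraint via $z_h = v_h + I_h(u - v_h)$. The resulting constants $c_1$ and $c_2$ match those in the paper exactly.
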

\begin{proof} The proof proceeds similarly as the one of quasi-optimality of the Stokes problem see Theorem~\ref{thm:quasiopt-mixed} and also~\cite[Sec.~3]{John2017}. 
Additionally we have to account for the inhomogeneous boundary conditions. 

The first IPM iterate $u_{h,1} \in X_h$ with $\tr(u_{h,1}) = \tr(g_h)$ solves~\eqref{eq:ipm-u} with $\oldphi_{h,0} = 0 $, and the exact solution $u \in H^1_{0,\divergence}(\Omega)$ such that $\tr(u) = \tr(g)$ solves~\eqref{eq:Stokes-weak-inh}. 
Therefore, the error $e_{h,1} \coloneqq u - u_{h,1}$ satisfies 
\begin{align}\label{eq:ipm-first-1}
			\nu\skp{\nabla e_{h,1}}{\nabla v_h} - \skp{p - \rho \divergence e_{h,1}}{\divergence v_h} = 0 \qquad \text{for all } v_h\in V_h \subset H^1_0(\Omega)^d.
\end{align} 	
We split the error $e_{h,1}$ into a discrete part with zero trace and into the rest. I.e., for arbitrary $z_h \in X_h$ with $\tr(z_h) = \tr(g_h)$ we consider 
	\begin{equation}\label{eq:ipm-first-2}
	e_{h,1} = u - u_{h,1} = u-z_h - ( u_{h,1} - z_h) \eqqcolon w - v_h,
\end{equation}
and we note that $v_h = u_{h,1} - z_h \in V_h$. 
Thus, using~\eqref{eq:ipm-first-1} we find  that
\begin{align}\label{eq:ipm-first-3}
	\nu \norm{\nabla v_h}_{L^2(\Omega)}^2 
	=
	\nu  \skp{\nabla w- \nabla e_{h,1}}{\nabla v_h}
	\leq 
	\nu \norm{\nabla w}_{L^2(\Omega)} \norm{\nabla v_h}_{L^2(\Omega)} - \skp{p - \rho \divergence e_{h,1}}{\divergence v_h}. 
\end{align}
Since $\divergence u= 0$ we also have with \eqref{eq:ipm-first-2} that
\begin{align}\label{eq:ipm-first-4}
	\rho\norm{\divergence v_h}_{L^2(\Omega)}^2 = \rho\skp{\divergence w - \divergence e_{h,1}}{\divergence v_h} 
	= - \rho \skp{\divergence (z_h + e_{h,1})}{\divergence v_h}.
\end{align}
Adding~\eqref{eq:ipm-first-3} and~\eqref{eq:ipm-first-4} and applying Young's inequality we arrive at 
\begin{alignat*}{3}	
\nu \norm{\nabla v_h}_{L^2(\Omega)}^2 + 
		\rho\norm{\divergence v_h}_{L^2(\Omega)}^2
&\leq 
&&\nu \norm{\nabla w}_{L^2(\Omega)} \norm{\nabla v_h}_{L^2(\Omega)} - \skp{p + \rho \divergence z_{h}}{\divergence v_h}\\
& \leq 
&& \tfrac{\nu}{2} \norm{\nabla w}_{L^2(\Omega)}^2 + \tfrac{\nu}{2} \norm{\nabla v_h}_{L^2(\Omega)}^2 \\
&&& + \tfrac{1}{4\rho} \norm{p + \rho \divergence z_h}_{L^2(\Omega)}^2 + \rho\norm{\divergence v_h}_{L^2(\Omega)}^2.
\end{alignat*}
Hence, by rearranging the terms it follows with $w=u-z_h$ that 
\begin{align}\label{eq:ipm-first-5}
	\norm{\nabla v_h}_{L^2(\Omega)} 
	\leq \norm{\nabla u - \nabla z_h}_{L^2(\Omega)} +
	\frac{1}{\sqrt{2\rho\nu}}  \norm{p+\rho\divergence z_h}_{L^2(\Omega)}. 
\end{align}
Then, by triangle inequality and~\eqref{eq:ipm-first-2} and applying~\eqref{eq:ipm-first-5} we arrive at 
\begin{align*}
	\norm{\nabla e_{h,1}}_{L^2(\Omega)}  
	&\leq 
	\norm{\nabla u - \nabla z_h}_{L^2(\Omega)} + \norm{\nabla v_h}_{L^2(\Omega)} \\
	&\leq 2 \norm{\nabla u -\nabla z_h}_{L^2(\Omega)} + \frac{1}{\sqrt{2\rho\nu}} \norm{p + \rho\divergence z_h }_{L^2(\Omega)}. 
\end{align*}
Since $z_h \in X_h$ with $\tr(z_h)=\tr(g_h)$ was arbitrary, this implies  
\begin{align}\label{eq:ipm-first-6}
	\norm{\nabla e_{h,1}}_{L^2(\Omega)}  \leq \inf_{\substack{z_h\in X_h \\ \tr(z_h)=\tr(g_h)}} \left(2 \norm{\nabla u -\nabla z_h}_{L^2(\Omega)} + \frac{1}{\sqrt{2\rho\nu}} \norm{p + \rho\divergence z_h}_{L^2(\Omega)} \right).
\end{align}
Note that we have $\tr(g_h) = \tr(\Pi_h(g))$ with Fortin operator $\Pi_h \colon H^1(\Omega)^d\to X_h$ as in Assumption~\ref{assumpt:Fortin}; see~Proposition~\ref{prop:SV-op} for the existence of $\Pi_h$.
Now, to remove the constraint on the function in the infimum we proceed as in the proof of Theorem~\ref{thm:quasiopt-mixed}. For arbitrary $v_h \in X_h$ we consider
\begin{align}\label{def:zh-Pih}
	z_h \coloneqq v_h + \Pi_h(u - v_h), 
\end{align}
which satisfies $\tr(z_h) = \tr(g_h)$ and $\skp{\divergence z_h}{q_h} = 0$ for all $q_h\in Q_h$. In particular, $\divergence z_h\in Q_h$ implies $\divergence z_h \equiv 0$ in $L^2(\Omega)$ and 
\begin{align}\label{eq:ipm-first-p}
	\norm{p+\rho\divergence z_h}_{L^2(\Omega)} = \norm{p}_{L^2(\Omega)}.
\end{align}
Thus, using the stability of $\Pi_h$ according to Assumption~\ref{assumpt:Fortin}~\ref{itm:stab} we obtain
\begin{equation}\label{eq:ipm-first-u}
	\begin{aligned}
		\norm{\nabla u - \nabla z_h}_{L^2(\Omega)} 
		&\leq 
		\norm{\nabla u - \nabla v_h}_{L^2(\Omega)} 
		+ 	\norm{\nabla \Pi_h(u - v_h)}_{L^2(\Omega)} \\
		&	\leq (1+ c_{F}) 	\norm{\nabla u - \nabla v_h}_{L^2(\Omega)}
	\end{aligned}
\end{equation}
Combining~\eqref{eq:ipm-first-6} with~\eqref{eq:ipm-first-p} and~\eqref{eq:ipm-first-u} yields 
\begin{align}
	\norm{\nabla e_{h,1}}_{L^2(\Omega)}  \leq c_1 \inf_{\substack{v_h\in X_h}}  \norm{\nabla u -\nabla v_h}_{L^2(\Omega)} + c_2 \norm{p}_{L^2(\Omega)},
\end{align}
with $c_1 = 2(1+c_F)$ and $c_2 = \frac{1}{\sqrt{2 \rho \nu}} $, 
and hence shows the claim.
\end{proof}

\begin{remark}
	Taking the limit $\rho\to\infty$ in Lemma~\ref{lemma:quasi-opt_grad-div} recovers the quasi-optimal and pressure-robust error estimate in Theorem~\ref{thm:SV-inhom}.
\end{remark}

Now we have everything in place to prove an a~priori error estimate for the IPM, and the velocity estimate is asymptotically pressure-robust.

\begin{theorem}{(a priori error estimate for the IPM)}\label{thm:ipm-quasi-opt}
	Let~$\nu >0$, let $F \in H^{-1}(\Omega)^d$ and let~$g\in H^{1}_\sim(\Omega)$ be given. 
	Let $(u,p) \in \Hdiv \times \Lmeanfree(\Omega)$ be the weak solution to the Stokes problem~\eqref{eq:Stokes-weak-inh}.  
	For some $k \in \mathbb{N}$ let $X_h, X_{h,\sim,\divergence},V_h, Q_h$ be as in~\eqref{def:Xh-SV}, \eqref{def:Xhdiv-SV}, \eqref{def:Vh_SV} and~\eqref{def:Qh_SV} satisfying Assumptions~\ref{assumpt:FEM} with inf-sup constant $\overline \beta >0$ and Assumption~\ref{assumpt:pressure-sp}.
	Let $g_h \in X_h$ be such that  $\tr(g_h)=\tr(I_h g)= \tr(\Pi_h g)$, for the operators as in Proposition~\ref{prop:SV-op}. 
	Further, for $\rho>0$ and $\oldphi_{h,0}=0$ let $(u_{h,i})_{i \in \mathbb{N}}, (\oldphi_{h,i})_{i \in \mathbb{N}} \subset  X_{h,\sim}$, be sequences of iterate solutions to the IPM~\eqref{eq:ipm}. 
	
	Then, we have the following error estimates 
	\begin{align*}
		\norm{\nabla u - \nabla u_{h,i}}_{L^2(\Omega)} &\leq\, c_1(1+2\theta^{i-1}) \inf_{v_h \in X_{h}} \norm{\nabla u - \nabla v_h}_{L^2(\Omega)} + c_2\theta^{i-1} \norm{p}_{L^2(\Omega)},
		\\
		\norm{p - \divergence \oldphi_{h,i}}_{L^2(\Omega)} 
				&\leq\,
		c_1\left( \nu \overline \beta^{-1} + c_3 \theta^{i-1}\right) \inf_{v_h \in X_h} \norm{\nabla u - \nabla v_h}_{L^2(\Omega)} + 
		\inf_{q_h \in Q_h} \norm{p - q_h }_{L^2(\Omega)} \\
		& ~
		+ c_2 c_3 \theta^{i-1} \norm{p}_{L^2(\Omega)} ,
	\end{align*}
	where $\theta\coloneqq \frac{\nu}{\nu+\rho\overline \beta^2}$, $c_1 = 2(1 + c_F)$, 
	 $c_2= \frac{1}{\sqrt{2\rho\nu}}$, $c_3=\tfrac{\nu + \rho \overline \beta \sqrt{d} }{\overline \beta}$ and where $c_F>0$ is the constant in Assumption~\ref{assumpt:Fortin}~\ref{itm:stab}.
\end{theorem}

\begin{proof}
	Let $(u_h,p_h) \in X_{h,\sim,\divergence} \times Q_h$ be the discrete solution to the discrete mixed problem~\eqref{eq:Stokes-mixed-inh} with the Scott--Vogelius method.  
	Adding and subtracting $u_h$, we split the error into  $ u -  u_{h,i}$ and estimate the terms on the right-hand side of  
	\begin{align*}
		\norm{\nabla u - \nabla u_{h,i}}_{L^2(\Omega)} 
		&\leq\norm{\nabla u - \nabla u_h}_{L^2(\Omega)} + \norm{\nabla  u_h - \nabla u_{h,i}}_{L^2(\Omega)}
	\end{align*}
	separately. 
	By Lemma~\ref{lem:conv_ipm}, adding and subtracting $u$, and by Lemma~\ref{lemma:quasi-opt_grad-div} we have 
	\begin{align*}
		\norm{\nabla  u_h - \nabla u_{h,i}}_{L^2(\Omega)} & \leq \theta^{i-1} \norm{\nabla u_h - \nabla u_{h,1}}_{L^2(\Omega)} \\
		&\leq \theta^{i-1} \norm{\nabla u - \nabla u_h}_{L^2(\Omega)} + \theta^{i-1} \norm{\nabla u - \nabla u_{h,1}}_{L^2(\Omega)} \\
		&\leq \theta^{i-1} \norm{\nabla u - \nabla u_h}_{L^2(\Omega)} + \theta^{i-1} \left(c_1 \inf_{v_h \in X_h} \norm{\nabla u - \nabla v_h}_{L^2(\Omega)}  + c_2 \norm{p}_{L^2(\Omega)} \right)
	\end{align*}
    with constants $c_1=2(1+c_F), c_2=\tfrac{1}{\sqrt{2\rho\nu}}$.
	Proposition~\ref{thm:SV-inhom} yields 
	\begin{align*}
		\norm{\nabla u - \nabla u_h}_{L^2(\Omega)} \leq c_1 \inf_{v_h \in X_h} \norm{\nabla u - \nabla v_h}_{L^2(\Omega)}.
	\end{align*}
	Putting everything together we obtain
	\begin{align*}
		\norm{\nabla u - \nabla u_{h,i}}_{L^2(\Omega)} &\leq\, c_1(1+2\theta^{i-1}) \inf_{v_h \in X_{h}} \norm{\nabla u - \nabla v_h}_{L^2(\Omega)} + c_2\theta^{i-1} \norm{p}_{L^2(\Omega)}.
	\end{align*}
	Similarly, for the pressure error we use $p + \divergence \oldphi_{h,i} = p - p_h + p_h - \divergence \oldphi_{h,i}$ and estimate  
	\begin{align*}
		\norm{p- \divergence \oldphi_{h,i}}_{L^2(\Omega)} 
		\leq 	
		\norm{p- p_h }_{L^2(\Omega)}  + 	\norm{p_h- \divergence \oldphi_{h,i}}_{L^2(\Omega)} 		
	\end{align*}
	separately. For the first term we use Proposition~\ref{thm:SV-inhom}
	\begin{align*}
		\norm{p- p_h }_{L^2(\Omega)} \leq c_1 \nu (\overline{\beta})^{-1} \inf_{v_h \in X_h} \norm{\nabla u - \nabla v_h}_{L^2(\Omega)} + \inf_{q_h \in Q_h} \norm{p - q_h }_{L^2(\Omega)}.
	\end{align*}
	Then Lemma~\ref{lem:conv_ipm}  with $c_3\coloneqq\tfrac{\nu + \rho \overline \beta \sqrt{d} }{\overline \beta}$ and Lemma~\ref{lemma:quasi-opt_grad-div} yield
	\begin{align*}
		\norm{p_h- \divergence \oldphi_{h,i}}_{L^2(\Omega)} &\leq c_3 \theta^{i-1}\norm{\nabla u_h - \nabla u_{h,1}}_{L^2(\Omega)} \\
		&\leq c_3 \theta^{i-1} \left(c_1\inf_{v_h \in X_h} \norm{\nabla u - \nabla v_h}_{L^2(\Omega)} + c_2 \norm{p}_{L^2(\Omega)} \right).
	\end{align*}
	Combining both estimates shows 
	\begin{align*}
		\norm{p - \divergence \oldphi_{h,i}}_{L^2(\Omega)} 
		&\leq\,
		c_1\left( \nu \overline \beta^{-1} + c_3 \theta^{i-1}\right) \inf_{v_h \in X_h} \norm{\nabla u - \nabla v_h}_{L^2(\Omega)} + 
		\inf_{q_h \in Q_h} \norm{p - q_h }_{L^2(\Omega)} \\
		& ~
		+ c_2 c_3 \theta^{i-1} \norm{p}_{L^2(\Omega)} ,
	\end{align*}
 which proves the claim.
\end{proof}

\subsection{Monotonicity of the divergence}\label{sec:mon-div}

In addition to the exponential decay of the norm $\norm{\divergence u_{h,i}}_{L^2(\Omega)}$ as in Lemma~\ref{lem:conv_ipm}, monotonicity of the divergence norm can also be established. 
This is particularly simple by considering the corresponding energy, which is minimised in the $i$-th iteration of the IPM. 
More precisely, inserting the update~\eqref{eq:ipm-phi} for $i- 1$ into~\eqref{eq:ipm-u} we obtain that $u_{h,i} \in X_h$ such that $\tr(u_{h,i}) = \tr(g_h)$ solves 
\begin{align*}
	\nu \skp{\nabla u_{h,i}}{\nabla v_h} + \rho \skp{\divergence  u_{h,i}}{\divergence v_h} = \nu \skp{\nabla u_{h,i-1}}{\nabla v_h} \qquad \text{ for any } v_h \in V_h,
\end{align*}
for given $u_{h,i-1}\in X_h$ with $\tr(u_{h,i-1}) = \tr(g_h)$. We consider the energy 
\begin{align}\label{eq:def-Ji}
	{\mathcal{J}}_i(v_h) \coloneqq& \tfrac{\nu}{2} \norm{\nabla v_h}^2_{L^2(\Omega)} 
	+ \tfrac{\rho}{2}\norm{\divergence v_h}^2_{L^2(\Omega)}
	- \nu \skp{\nabla u_{h,i-1}}{\nabla v_h} \quad \text{for } v_h \in X_h. 
\end{align}
We may note that for the $i$th iterate of the IPM~\eqref{eq:ipm} we have
\begin{alignat*}{3}
	u_{h,i} & = \argmin_{v \in X_{h}\colon \tr(v) = \tr(g_h)} \mathcal{J}_i(v).
\end{alignat*}
 
This viewpoint allows us to prove monotonicity of the norm of the divergence in the iteration. 
\begin{theorem}\label{thm:mon}
	Under the conditions of Lemma~\ref{lem:conv_ipm} let $(u_{h,i})_{i \in \mathbb{N}}\subset X_h$ be the sequence of velocity solutions determined by the IPM~\eqref{eq:ipm}. 
	Then we have for any $i \geq 2$ that
	\begin{align*}
		\norm{\divergence u_{h,i}}_{L^2(\Omega)} \leq \norm{\divergence u_{h,i-1}}_{L^2(\Omega)}. 
	\end{align*}
\end{theorem}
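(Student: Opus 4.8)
The plan is to use the variational characterisation of the $i$th IPM iterate as the minimiser of $\mathcal{J}_i$ over the affine set $\{v\in X_h\colon\tr(v)=\tr(g_h)\}$ recorded just above the statement, together with the observation that for $i\geq 2$ the previous iterate $u_{h,i-1}$ is itself an admissible competitor for this minimisation, since $\tr(u_{h,i-1})=\tr(g_h)$ by construction of the IPM~\eqref{eq:ipm}.

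The key step is to complete the square in $\mathcal{J}_i$. Using the elementary identity
\[
\tfrac{\nu}{2}\norm{\nabla v}_{L^2(\Omega)}^2-\nu\skp{\nabla u_{h,i-1}}{\nabla v}
=\tfrac{\nu}{2}\norm{\nabla(v-u_{h,i-1})}_{L^2(\Omega)}^2-\tfrac{\nu}{2}\norm{\nabla u_{h,i-1}}_{L^2(\Omega)}^2,
\]
the functional in~\eqref{eq:def-Ji} rewrites, for $v\in X_h$, as
\[
\mathcal{J}_i(v)=\tfrac{\nu}{2}\norm{\nabla(v-u_{h,i-1})}_{L^2(\Omega)}^2+\tfrac{\rho}{2}\norm{\divergence v}_{L^2(\Omega)}^2-\tfrac{\nu}{2}\norm{\nabla u_{h,i-1}}_{L^2(\Omega)}^2 .
\]
Since the last term does not depend on $v$, the iterate $u_{h,i}$ is equally the minimiser over $\{v\in X_h\colon\tr(v)=\tr(g_h)\}$ of
\[
\widetilde{\mathcal{J}}_i(v)\coloneqq\tfrac{\nu}{2}\norm{\nabla(v-u_{h,i-1})}_{L^2(\Omega)}^2+\tfrac{\rho}{2}\norm{\divergence v}_{L^2(\Omega)}^2 .
\]
If one prefers not to invoke the $\argmin$ identity directly, it can be re-derived here: $\widetilde{\mathcal{J}}_i$ is strictly convex on the affine set because its quadratic part $\nu\skp{\nabla\cdot}{\nabla\cdot}+\rho\skp{\divergence\cdot}{\divergence\cdot}$ is coercive on $V_h$ by Poincaré, and the associated first-order optimality condition on the affine set is precisely~\eqref{eq:ipm-u} with the update~\eqref{eq:ipm-phi} inserted.

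Finally, for $i\geq 2$ I would test minimality against the competitor $v=u_{h,i-1}$, admissible by the first paragraph, obtaining
\[
\tfrac{\nu}{2}\norm{\nabla(u_{h,i}-u_{h,i-1})}_{L^2(\Omega)}^2+\tfrac{\rho}{2}\norm{\divergence u_{h,i}}_{L^2(\Omega)}^2
=\widetilde{\mathcal{J}}_i(u_{h,i})\leq\widetilde{\mathcal{J}}_i(u_{h,i-1})=\tfrac{\rho}{2}\norm{\divergence u_{h,i-1}}_{L^2(\Omega)}^2 .
\]
Dropping the nonnegative gradient term on the left and multiplying by $2/\rho$ yields $\norm{\divergence u_{h,i}}_{L^2(\Omega)}^2\leq\norm{\divergence u_{h,i-1}}_{L^2(\Omega)}^2$, and taking square roots proves the claim; in fact the sharper inequality $\tfrac{\nu}{\rho}\norm{\nabla(u_{h,i}-u_{h,i-1})}_{L^2(\Omega)}^2+\norm{\divergence u_{h,i}}_{L^2(\Omega)}^2\leq\norm{\divergence u_{h,i-1}}_{L^2(\Omega)}^2$ comes for free. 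I do not expect a genuine obstacle; the only point to watch is the trace bookkeeping that forces the restriction to $i\geq 2$ (for $i=1$ the natural competitor would be $u_{h,0}$, which is not defined---only $\oldphi_{h,0}=0$ is).
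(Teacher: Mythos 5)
Your proposal is correct and follows essentially the same route as the paper: both exploit the minimisation characterisation $u_{h,i}=\argmin\mathcal{J}_i$ with the admissible competitor $u_{h,i-1}$ and then complete the square in the gradient terms to isolate $\tfrac{\nu}{2}\norm{\nabla(u_{h,i}-u_{h,i-1})}_{L^2(\Omega)}^2\geq 0$; you merely perform the completion of the square before invoking minimality rather than after. The extra observations (the sharper inequality retaining the gradient increment, and the remark on why $i\geq 2$ is needed) are consistent with, and slightly more explicit than, the paper's argument.
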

\begin{proof}
Noting that $u_{h,i}$ is the minimiser of $\mathcal{J}_i(v_h)$ over all $v_h \in X_h$ such that $\tr(v_{h}) = \tr(g_h)$ we have in particular $\mathcal{J}_i(u_{h,i}) \leq \mathcal{J}_i(u_{h,i-1})$ and thus 
\begin{align*}
	\tfrac{\nu}{2}\norm{\nabla u_{h,i}}^2_{L^2(\Omega)} 
	+ \tfrac{\rho}{2}\norm{\divergence u_{h,i}}^2_{L^2(\Omega)} 
	&- \nu \skp{\nabla u_{h,i-1}}{\nabla u_{h,i}} \\
	&
	\leq -\tfrac{\nu}{2}\norm{\nabla u_{h,i-1}}^2_{L^2(\Omega)}
	+ \tfrac{\rho}{2}\norm{\divergence u_{h,i-1}}^2_{L^2(\Omega)}
\end{align*}
Rearranging terms yields 
\begin{align*}
0 \leq 	\tfrac{\nu}{2} \norm{\nabla u_{h,i} - \nabla u_{h,i-1}}^2_{L^2(\Omega)}
	\leq
	 \tfrac{\rho}{2} \left( \norm{\divergence u_{h,i-1}}^2_{L^2(\Omega)}  - \norm{\divergence u_{h,i}}^2_{L^2(\Omega)} \right),
\end{align*}
which proves the claim. 
\end{proof}

Notice that this property hinges on the fact that we can replace the pressure iterate \eqref{eq:ipm-phi} in the velocity equation \eqref{eq:ipm-u}, which is available because of $Q_h = \divergence V_h$, but not for general Uzawa methods of the form~\eqref{eq:Uzawa}.

\subsection{Numerical experiments for the iterated penalty method}\label{sec:ipm-num-exp}
In the following, we present some numerical tests in 2D on the iterated penalty method for the Scott--Vogelius element. 
We implemented the version presented in \cite[Alg. 7.2]{BernardiGiraultRaviartHechtRiviere2024}.

\subsubsection*{Manufactured solution and discretisation}
We use again the manufactured solution in~\eqref{eq:exact_sol} to the Stokes problem~\eqref{eq:Stokes-weak-inh} on $\Omega=(0,1)^2$ subject to inhomogeneous Dirichlet boundary conditions for given $g=u\in \Hsim$. 
The  triangulations for the discrete problem are generated using parameters $N\in\{4,8,16,32,64\}$ as before. 
As in Section~\ref{sec:NumExp_SV}, we directly impose the traces $\tr(u_{h,i}) = \tr(g_h)=\tr(\tilde{I}_h(g))$ in FEniCS with $\tilde I_h$, as introduced in Section~\ref{sec:compat}. 
Alternatively, by use of a discrete extension of $\tr(g_h)$, one may reduce the inhomogeneous problem~\eqref{eq:Uzawa-u} to the homogeneous one. 
However, in this case the convergence of the IPM depends on the choice of the discrete extension, as demonstrated in Subsection~\ref{subsec:dep_extension} below.
To ensure reproducibility, for this reason, in the remaining numerical experiments we impose the inhomogeneous Dirichlet boundary conditions directly by the FEniCS routine \texttt{DirichletBC}. 
We investigate the perfomance of the IPM on the original mesh $\tria_h$ compared to the modified mesh $\tilde{\tria}_h$ as described in Section~\ref{sec:mesh_mod}:  
\begin{enumerate}[label=$\bullet$]
	\item\label{itm:no-mod} 
	In $\tria_h$:  singular and nearly singular vertices may be present; 
	\item\label{itm:full-mod} In $\tilde{\tria}_h$: 
	local modification at all possibly singular vertices are employed so that no singular and no (nearly) singular vertices occur, see~Section~\ref{sec:mesh_mod}.
\end{enumerate}
All experiments are conducted with polynomial degree $k=4$, i.e., the velocity space is $X_h=\mathcal{L}_4^1(\tria_h)^2$ and $\mathcal{L}_4^1(\tilde{\tria}_h)^2$ respectively, see~\eqref{def:Xh-SV}. 
In the case of the modified mesh $\tilde{\tria}_h$ the pressure space is characterised as $Q_h = \mathcal{L}_3^0(\tilde{\tria}_h)\cap L_0^2(\Omega)$. 
For the original mesh $\tria_h$ due to the possible presence of singular vertices the pressure functions may have some local constraints. 
Since the IPM does not require a basis of the pressure space, we can approximate the solutions by using the IPM in this case without extra effort.

We use $\tol=10^{-11}$ for the termination criterion \eqref{eq:tol} of the IPM and we impose that at most $100$ iterations are performed. 
We investigate the penalisation parameter $\rho\in\{10^2, 10^4\}$. 

\subsubsection{Dependence on discrete extension}\label{subsec:dep_extension}

By use of an extension the Stokes problem~\eqref{eq:Stokes-weak-inh} subject to inhomogeneous boundary conditions can be reduced to one with homogeneous boundary conditions. 
We consider the manufactured solution $u$ in~\eqref{eq:exact_sol} to the Stokes problem~\eqref{eq:Stokes-weak-inh} on $\Omega=(0,1)^2$ and approximate the numerical solutions by using different discrete extensions. 

We choose $g\coloneqq u\in H^1_{\sim}(\Omega)$ the manufactured solution, and we consider a perturbed function $\tilde{g}= g + g_{\mathrm{perp}}$ for a divergence-free function $g_{\mathrm{perp}}$ with $g_{\mathrm{perp}}|_{\partial \Omega} = 0$. 
Thus, we have that $g|_{\partial \Omega} = \tilde g|_{\partial \Omega}$. 
Specifically, for a scaling parameter  $c_{\mathrm{perp}}\in\{10^3, 10^6\}$ and function $\chi\coloneqq \sin^2(\pi x_1)\sin^2(\pi x_2)$ we choose 
	\begin{align}\label{eq:uperp}
	g_{\mathrm{perp}} 
	= c_{\mathrm{perp}}
	\operatorname{curl} \chi
	= c_{\mathrm{perp}}
	 \begin{pmatrix}
		\pi\sin^2(\pi x_1)\sin(2\pi x_2) \\ -\pi \sin(2\pi x_1)\sin^2(\pi x_2)	\end{pmatrix}\in H^1_0(\Omega)^2\cap \Hdiv .
\end{align}
The discrete boundary datum is $\tilde{I}_h({g})|_{\partial \Omega} = \tilde{I}_h(\tilde{g})|_{\partial \Omega}$, with $\tilde{I}_h$ as in \eqref{def:tilde-Ih} and $\tilde{g}= g + g_{\mathrm{perp}}$. 
Then we consider $g_h = \tilde{I}_h(\tilde{g})$ as discrete extension and write the discrete solution to the inhomogeneous discrete problem as $u_{h,i}=\tilde{u}_{h,i}+ g_h$. 
Then, $\tilde{u}_{h,i}\in H^1_0(\Omega)^2$ can be computed by the IPM with homogeneous boundary conditions using this extension. 

 Table~\ref{tbl:IPM_hom_pert_1e3} shows the errors for the two choices of perturbation factor $c_{\mathrm{perp}}\in\{10^3, 10^6\}$. 
 In particular for mesh parameter $N=32$ one can observe different values of the $L^2$-error of the velocity and  of the $L^2$-norm of the divergence for the choices of scaling factor $c_{\mathrm{perp}}$. 
 Due to this dependency on the discrete extension and to avoid ambiguity, in the following we impose the inhomogeneous Dirichlet boundary conditions directly by the FEniCS routine \texttt{DirichletBC}.

\begin{table}[h]
	\small
	
	\begin{subtable}{\linewidth}
		\begin{tabular}{c rcccccc}
			\toprule
			& \;N & $h$\; & \; $i$ \;& ~$\norm{u - u_{h,i}}_{L^2(\Omega)}$ \;& \;$\norm{u - u_{h,i}}_{H^1(\Omega)}$ \;& \;$\norm{\divergence u_{h,i}}_{L^2(\Omega)}$\; & \;$\norm{p - p_{h,i}}_{L^2(\Omega)}$ \;\\
			\midrule
			\multirow{4}{*}{\rotatebox[origin=c]{90}{$c_{\mathrm{perp}}=10^3$}}
			& 4  & 0.29&  4 &  1.97e+00 & 1.03e+02  & 6.12e-13      &   2.01e+02 \\
			& 8 & 0.16  &   4  &  1.30e-01& 1.22e+01& 1.22e-12  & 3.59e+01\\
			& 16 & 0.08  &   3  &   2.96e-03 &  5.79e-01  & 6.12e-12  &    1.72e+00 \\
			& 32  & 0.04  &  3 & 1.68e-04 &   5.67e-02  & 4.58e-12   & 2.00e-01 \\
			& 64 & 0.02 &  3 &  1.27e-04&  3.00e-03  & 9.17e-12   &   9.13e-03 \\
			\bottomrule
		\end{tabular}
	\end{subtable}
	
	\vfill
	
	\begin{subtable}{\linewidth}
		\begin{tabular}{c rcccccc}
			\toprule
			& \;N & $h$\; & \; $i$ \;& ~$\norm{u - u_{h,i}}_{L^2(\Omega)}$ \;& \;$\norm{u - u_{h,i}}_{H^1(\Omega)}$ \;& \;$\norm{\divergence u_{h,i}}_{L^2(\Omega)}$\; & \;$\norm{p - p_{h,i}}_{L^2(\Omega)}$ \;\\
			\midrule
			\multirow{4}{*}{\rotatebox[origin=c]{90}{$c_{\mathrm{perp}}=10^6$}}
			& 4 &  0.29  &   4 &  1.97e+00 &  103.46  & 6.12e-13  & 201.33\\
			&  8 & 0.16 &  4 & 1.30e-01 &   12.23  & 1.22e-12   & 35.93\\
			&  16 & 0.08 &  3  & 8.43e-03 &  0.58  & 6.13e-12 &   1.72\\
			&  32&  0.04  & 3 &   2.98e-02  &0.23  & 4.58e-12  &  0.21\\
			& 64 & 0.02 &  3 & 1.27e-01 &   0.93 & 9.18e-12   &   0.26\\
			\bottomrule
		\end{tabular}
	\end{subtable}
	
	\caption{Error and divergence norms on modified mesh $\tilde{\tria}_h$ between IPM discrete solutions using the reduction to homogeneous boundary conditions for two different extensions of the boundary data, see \eqref{eq:uperp}, with parameter $c_{\mathrm{perp}}$.
		The experiment was performed with $k = 4$ and $g_h = \tilde I_h (\tilde g)$, and manufactured solution $(u,p)$ as in \eqref{eq:exact_sol} to the Stokes problem \eqref{eq:Stokes-weak-inh}, with $\Ra = 1$, penalty parameter $\rho=10^2$, tolerance $\tol = 1e-11$.}
	\label{tbl:IPM_hom_pert_1e3}
\end{table}

\subsubsection{Convergence on modified mesh $\tilde{\tria}_h$}
We investigate the error between the exact solution $(u,p)$ to the Stokes problem~\eqref{eq:Stokes-weak-inh} and the discrete IPM solution $(u_{h,i},p_{h,i})$ as in~\eqref{eq:ipm} for inhomogeneous Dirichlet boundary conditions. 
For penalty parameters $\rho=10^2$ and $\rho=10^4$ the corresponding results are contained in Table~\ref{tbl:IPM_rho2} and Table~\ref{tbl:IPM_rho4}, respectively. 
Recall that the contraction factor $\theta$ of the IPM decreases in the penalty parameter $\rho$, see Lemma~\ref{lem:conv_ipm}. 
Also in the numerical experiments we observe that the convergence rate increases in $\rho$; for mesh parameters $N\in\{4,8,16,32\}$, the norm of the approximate velocity divergence reaches the tolerance $\tol = 10^{-11}$ in only $3$ or $4$ iterations for $\rho=10^4$, see~Table~\ref{tbl:IPM_rho4}, whereas more than $14$ iterations are required for $\rho=10^2$, see~Table~\ref{tbl:IPM_rho2}. 
The $L^2$- and $H^1$-errors in the velocity and the $L^2$-error of the pressure error are almost identical for both values of $\rho$.

\begin{table} \small
	\begin{tabular}{rcccccc} 
		\toprule
		\;N & $h$\; & \; $i$ \;& ~$\norm{u - u_{h,i}}_{L^2(\Omega)}$ \;& \;$\norm{u - u_{h,i}}_{H^1(\Omega)}$ \;& \;$\norm{\divergence u_{h,i}}_{L^2(\Omega)}$\; & \;$\norm{p - p_{h,i}}_{L^2(\Omega)}$ \;\\
		\midrule
		4 & 0.29  &  16   &  1.97e+00 &    1.03e+02  & 8.60e-12   &      2.01e+02\\
		8 & 0.16   & 20 &    1.30e-01  &   1.22e+01  & 5.80e-12    &     3.59e+01 \\
		16 & 0.08  &  17    & 2.96e-03   &  5.79e-01  & 5.22e-12     &    1.72e+00\\
		32 & 0.04   & 14 &    1.66e-04   &  5.67e-02  & 7.62e-12  &       2.00e-01\\
		64 & 0.02 &   14  &   3.80e-06 &   2.85e-03  & 9.30e-12     &    9.13e-03\\
		\bottomrule
	\end{tabular}
	\caption{Error and divergence norms on modified mesh $\tilde{\tria}_h$ between IPM discrete solutions $(u_{h,i}, p_{h,i})$ to \eqref{eq:ipm} with $k = 4$ and $g_h = \tilde I_h (g)$, and manufactured solution $(u,p)$ as in \eqref{eq:exact_sol} to the Stokes problem \eqref{eq:Stokes-weak-inh}, with $\Ra = 1$, penalty parameter $\rho=10^2$, tolerance $\tol =$ 1e-11. }
	\label{tbl:IPM_rho2}
\end{table}

\begin{table} \small 
	\begin{tabular}{rcccccc}
		\toprule
		\;N & $h$\; & \; $i$ \; & \;$\norm{u - u_{h,i}}_{L^2(\Omega)}$ \;& \;$\norm{u - u_{h,i}}_{H^1(\Omega)}$ \;& \;$\norm{\divergence u_{h,i}}_{L^2(\Omega)}$\; & \;$\norm{p - p_{h,i}}_{L^2(\Omega)}$ \;\\
		\midrule
		4 & 0.29 &    4  &   1.97e+00  &   1.03e+02  & 6.02e-13    &     2.01e+02\\
		8 & 0.16 &    4  &   1.30e-01  &   1.22e+01  & 1.22e-12     &   3.59e+01\\
		16&  0.08  &   3   &  2.96e-03  &   5.79e-01  & 6.11e-12    &     1.72e+00\\
		32 & 0.04   &  3  &   1.66e-04  &   5.67e-02  & 4.58e-12   &      2.00e-01\\
		64&  0.02 &    3   &  3.81e-06  &   2.85e-03  & 9.17e-12    &     9.13e-03\\
		\bottomrule
	\end{tabular}
	\caption{Error and divergence norms on modified mesh $\tilde{\tria}_h$ between IPM discrete solutions $(u_{h,i}, p_{h,i})$ to \eqref{eq:ipm} with $k = 4$ and $g_h = \tilde I_h (g)$, and manufactured solution $(u,p)$ as in \eqref{eq:exact_sol} to the Stokes problem \eqref{eq:Stokes-weak-inh}, with $\Ra = 1$, penalty parameter $\rho=10^4$, tolerance $\tol=$ 1e-11. }
	\label{tbl:IPM_rho4}
\end{table}

\subsubsection{Effect of mesh modification}

\begin{figure}[h]
	\begin{subfigure}[b]{0.8\textwidth}
		\centering
		\begin{tikzpicture}[scale = 0.7]
			
			\definecolor{darkgray176}{RGB}{176,176,176}
			\definecolor{darkorange25512714}{RGB}{255,127,14}
			\definecolor{forestgreen4416044}{RGB}{44,160,44}
			\definecolor{lightgray204}{RGB}{204,204,204}
			\definecolor{steelblue31119180}{RGB}{31,119,180}
			
			\begin{axis}[
				legend cell align={left},
				legend style={fill opacity=0.8, draw opacity=1, text opacity=1, draw=black},
				legend pos=south east,
				log basis y={10},
				minor xtick={},
				minor ytick={},
				tick align=outside,
				tick pos=left,
				x grid style={darkgray176},
				xlabel={iteration $i$},
				xmajorgrids,
				xmin=-3.95, xmax=104.95,
				xtick style={color=black},
				xtick={-20,0,20,40,60,80,100,120},
				y grid style={darkgray176},
				ymajorgrids,
				ymin=1.57466661275396e-12, ymax=1e-2,
				ymode=log,
				ytick style={color=black},
				ytick={1e-14,1e-12,1e-10,1e-08,1e-06,0.0001,0.01,1,100},
				minor ytick={2e-11,3e-11,4e-11,5e-11,6e-11,7e-11,8e-11,9e-11,2e-10,3e-10,4e-10,5e-10,6e-10,7e-10,8e-10,9e-10,2e-09,3e-09,4e-09,5e-09,6e-09,7e-09,8e-09,9e-09,2e-08,3e-08,4e-08,5e-08,6e-08,7e-08,8e-08,9e-08,2e-07,3e-07,4e-07,5e-07,6e-07,7e-07,8e-07,9e-07,2e-06,3e-06,4e-06,5e-06,6e-06,7e-06,8e-06,9e-06,2e-05,3e-05,4e-05,5e-05,6e-05,7e-05,8e-05,9e-05,0.0002,0.0003,0.0004,0.0005,0.0006,0.0007,0.0008,0.0009,0.002,0.003,0.004,0.005,0.006,0.007,0.008,0.009,0.02,0.03,0.04,0.05,0.06,0.07,0.08,0.09,0.2,0.3,0.4,0.5,0.6,0.7,0.8,0.9,2,3,4,5,6,7,8,9,20,30,40,50,60,70,80,90}
				]
				
				\addplot [dashed,mark=none, thick] coordinates {
					(0,6.60e-11) (100,6.60e-11)
				};
				\addlegendentry{value for SV for $\tilde{\tria}_h$}
				
				\addplot [thick, steelblue31119180]
				table {%
					1	0.0024956901347373906
					2	0.0006805919750373422
					3	0.0003802516946913769
					4	0.00032955443219506784
					5	0.00030881326536124807
					6	0.0002929017539121079
					7	0.0002788971257828148
					8	0.00026611849630831896
					9	0.0002542473971551789
					10	0.00024309579814994355
					11	0.00023254563150818997
					12	0.00022251977416153969
					13	0.00021296550114337744
					14	0.0002038447719026942
					15	0.00019512850026320543
					16	0.00018679316696826472
					17	0.0001788188111353447
					18	0.00017118783395738194
					19	0.0001638842813624099
					20	0.00015689340950227648
					21	0.0001502014175577375
					22	0.0001437952799483927
					23	0.00013766263759195968
					24	0.00013179172461150502
					25	0.00012617131639447224
					26	0.00012079069065553793
					27	0.00011563959660538563
					28	0.00011070822921284723
					29	0.00010598720683679772
					30	0.0001014675511852974
					31	9.714066888116836e-05
					32	9.299833434875937e-05
					33	8.903267369038393e-05
					34	8.523614938485676e-05
					35	8.160154582837696e-05
					36	7.812195544710261e-05
					37	7.479076552826119e-05
					38	7.160164559081939e-05
					39	6.85485353070152e-05
					40	6.562563296570913e-05
					41	6.282738443992962e-05
					42	6.0148472586627066e-05
					43	5.758380713904221e-05
					44	5.512851503252547e-05
					45	5.277793111301125e-05
					46	5.052758927207153e-05
					47	4.8373213968517084e-05
					48	4.631071205621761e-05
					49	4.43361650187103e-05
					50	4.2445821547738596e-05
					51	4.063609033816132e-05
					52	3.8903533331907734e-05
					53	3.7244859123721225e-05
					54	3.5656916729021794e-05
					55	3.4136689597613e-05
					56	3.2681289852582306e-05
					57	3.128795282838008e-05
					58	2.995403178084788e-05
					59	2.8676992883216214e-05
					60	2.7454410403791556e-05
					61	2.628396207865495e-05
					62	2.5163424722068502e-05
					63	2.409066996807542e-05
					64	2.3063660241100882e-05
					65	2.208044490394724e-05
					66	2.1139156495696615e-05
					67	2.0238007254585948e-05
					68	1.9375285635261093e-05
					69	1.854935314312215e-05
					70	1.775864114362236e-05
					71	1.700164793827266e-05
					72	1.627693586648214e-05
					73	1.5583128615961858e-05
					74	1.4918908561648576e-05
					75	1.4283014297046305e-05
					76	1.3674238217987276e-05
					77	1.3091424242235923e-05
					78	1.2533465601684244e-05
					79	1.1999302733058473e-05
					80	1.1487921289266495e-05
					81	1.0998350176566396e-05
					82	1.0529659737324437e-05
					83	1.0080959994735546e-05
					84	9.65139890021396e-06
					85	9.24016079260887e-06
					86	8.84646479826911e-06
					87	8.469563364018738e-06
					88	8.108740819710202e-06
					89	7.76331206429882e-06
					90	7.432621189261886e-06
					91	7.116040318710242e-06
					92	6.812968352289859e-06
					93	6.522829840935111e-06
					94	6.2450739106035094e-06
					95	5.979173199487309e-06
					96	5.724622846424589e-06
					97	5.480939560030872e-06
					98	5.247660677558319e-06
					99	5.0243432982678094e-06
					100	4.8105634317639815e-06

				};
				\addlegendentry{no mod. $\tria_h$}
				
				\addplot [thick, forestgreen4416044]
				table {%
					1	0.0024385350090211154
					2	0.0005626733321674505
					3	0.00015014475721884034
					4	4.2254902769204267e-05
					5	1.2129394198746284e-05
					6	3.510223687856245e-06
					7	1.019636003980488e-06
					8	2.9674948715461803e-07
					9	8.646521292301065e-08
					10	2.521532217218645e-08
					11	7.358894192861247e-09
					12	2.1492345276340762e-09
					13	6.282035274170727e-10
					14	1.8379463054062066e-10
					15	5.387782738401704e-11
					16	1.5960991401375284e-11
					17	5.221423859648737e-12

				};
				\addlegendentry{mod. $\tilde{\tria}_h$}
			\end{axis}
			
		\end{tikzpicture}	
		\caption{Plot of \centering$\norm{\divergence u_{h,i}}_{L^2(\Omega)}$ over the iteration. The dashed line represents the value of the corresponding Scott--Vogelius solution $(u_h,p_h)$ to~\eqref{eq:Stokes-mixed-inh} for $\tilde{\tria}_h$. 
		}
		\label{fig:IPM-iter_div}
	\end{subfigure}\\
	\vspace{0.5cm}
	\begin{subfigure}[b]{0.39\textwidth}
		\includegraphics[width=\linewidth]{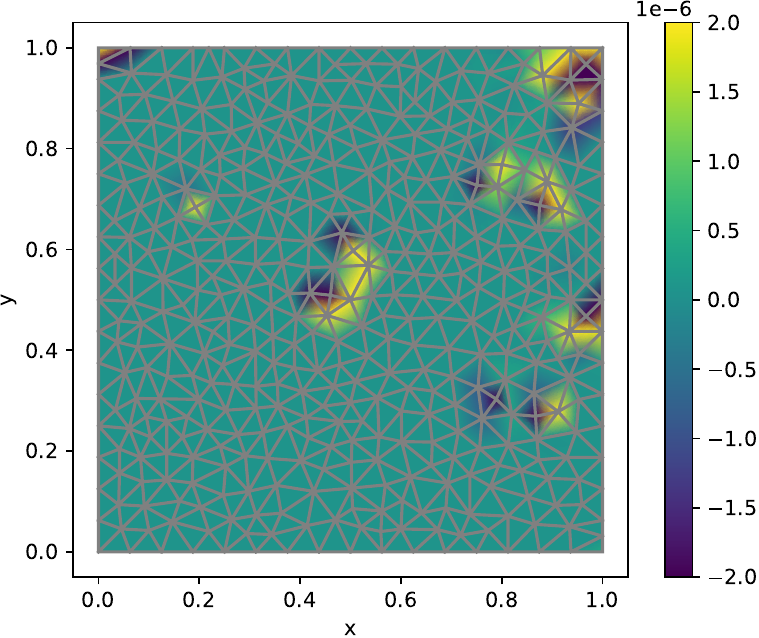}
		\caption{\centering $\divergence u_{h,17}$ for $\tria_h$, \linebreak
			\qquad $\norm{\divergence u_{h,17}}_{\ell^\infty(\Omega)} =$2.86e-02}
		\label{fig:IPM-mesh-nomod}
	\end{subfigure}
	\hspace{1cm}
	\begin{subfigure}[b]{0.39\textwidth}
		\includegraphics[width=\linewidth]{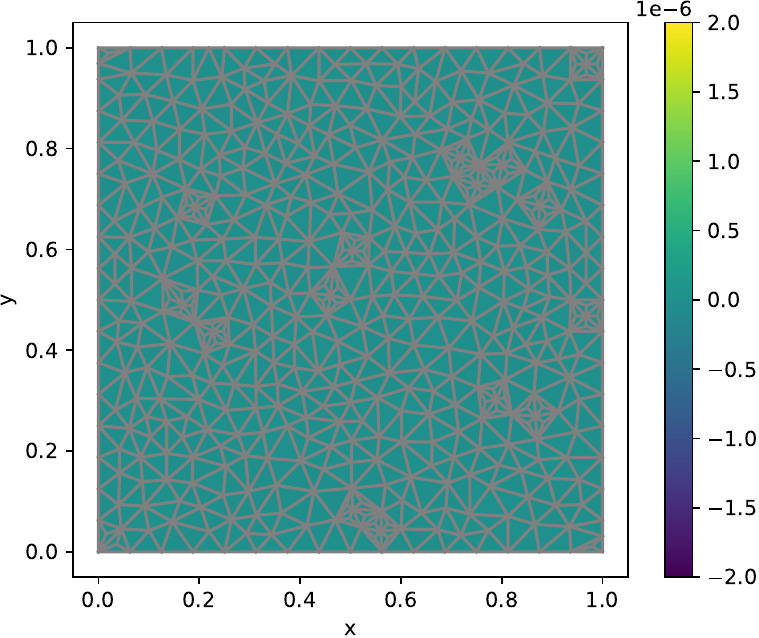}
		\caption{\centering $\divergence u_{h,17}$ for $\tilde{\tria}_h$, \linebreak 
			\qquad 
			$\norm{\divergence u_{h,17}}_{\ell^\infty(\Omega)} =$ 5.48e-10} 
		\label{fig:IPM-mesh-full}
	\end{subfigure}
	\caption{Comparison between the original mesh $\tria_h$ and the modified mesh $\tilde{\tria}_h$ (cf. Sec.~\ref{sec:mesh_mod}) for $N=16$: divergence norm of the IPM discrete velocity solution to~\eqref{eq:ipm} with $k=4$, $g_h=\tilde{I}_h (g)$ with $\Ra = 1$, penalty parameter $\rho=10^2$, tolerance $\tol =$ 1e-11 and data of the manufactured solution~\eqref{eq:exact_sol} to the Stokes problem~\eqref{eq:Stokes-weak-inh}.
	} 
	\label{fig:IPM-mesh}
\end{figure}
Here, we showcase the effect of nearly singular vertices in the mesh on the convergence of the IPM. 

Only on the modified mesh $\tilde{\tria}_h$
the norm of the divergence of the velocity iterates reaches the tolerance, see Figure~\ref{fig:IPM-mesh}~(\subref{fig:IPM-iter_div}). On the original mesh $\tria_h$ the convergence of the divergence errors  is much slower. This is likely due to smaller inf-sup constant at nearly singular vertices.
Independently of the type of mesh in Figure~\ref{fig:IPM-mesh}~(\subref{fig:IPM-iter_div}) we may observe that the divergence norm decays monotonically in the number of iterations, which illustrates the monotonicity result in Section~\ref{sec:mon-div}. 

\begin{figure}
	\begin{subfigure}[b]{0.48\textwidth}
		\begin{tikzpicture}[scale = 0.8]
			
			\definecolor{crimson2143940}{RGB}{214,39,40}
			\definecolor{darkgray176}{RGB}{176,176,176}
			\definecolor{darkorange25512714}{RGB}{255,127,14}
			\definecolor{darkturquoise23190207}{RGB}{23,190,207}
			\definecolor{forestgreen4416044}{RGB}{44,160,44}
			\definecolor{goldenrod18818934}{RGB}{188,189,34}
			\definecolor{gray127}{RGB}{127,127,127}
			\definecolor{lightgray204}{RGB}{204,204,204}
			\definecolor{mediumpurple148103189}{RGB}{148,103,189}
			\definecolor{orchid227119194}{RGB}{227,119,194}
			\definecolor{sienna1408675}{RGB}{140,86,75}
			\definecolor{steelblue31119180}{RGB}{31,119,180}
			
			\begin{axis}[
				log basis x={2},
				log basis y={10},
				tick align=outside,
				tick pos=left,
				x grid style={darkgray176},
				xlabel={N},
				xmajorgrids,
				xmin = 4,
				xmax = 64,
				xmode=log,
				xtick style={color=black},
				xtick={4,8,16,32,64},
				y grid style={darkgray176},
				ymajorgrids,
				ymin=5, ymax=3e+05,
				ymode=log,
				ytick style={color=black},
				minor ytick=8
				]
				\addplot [thick, forestgreen4416044]
				table {%
					4	104.12864535213453
					8	13.243668712205967
					16	101.7220977963988
					32	6.175173765971724
					64	5.692315656482406
					
				};
				\addplot [thick, forestgreen4416044, dash pattern=on 1pt off 3pt on 3pt off 3pt]
				table {%
					4	449.70573910096016
					8	283.6125932033863
					16	2327.4541792885257
					32	267.59397830105576
					64	262.3085283328427

				};
				\addplot [thick, mediumpurple148103189]
				table {%
					4	163.4301123389638
					8	73.98272098934805
					16	1017.2587835178678
					32	61.819331969008154
					64	56.924670692811226
					
				};
				\addplot [thick, mediumpurple148103189, dash pattern=on 1pt off 3pt on 3pt off 3pt]
				table {%
					4	4050.8666373648234
					8	2874.553841687339
					16	23274.83608376449
					32	2676.142466320852
					64	2623.0895514839362
					
				};
				\addplot [thick, darkorange25512714]
				table {%
					4	1277.6417987660836
					8	748.242486825227
					16	10172.640301777614
					32	618.2631888222933
					64	569.2482273485784
					
				};
				\addplot [thick, darkorange25512714, dash pattern=on 1pt off 3pt on 3pt off 3pt]
				table {%
					4	40483.643644185766
					8	28803.645464915913
					16	232748.6607899032
					32	26761.627920033116
					64	26230.89980525149
					
				};
				
			\end{axis}
		\end{tikzpicture}
		\caption{$i=5$}
	\end{subfigure}
	\hfill
	\begin{subfigure}[b]{0.48\textwidth}
		\begin{tikzpicture}[scale = 0.8]
			
			\definecolor{crimson2143940}{RGB}{214,39,40}
			\definecolor{darkgray176}{RGB}{176,176,176}
			\definecolor{darkorange25512714}{RGB}{255,127,14}
			\definecolor{darkturquoise23190207}{RGB}{23,190,207}
			\definecolor{forestgreen4416044}{RGB}{44,160,44}
			\definecolor{goldenrod18818934}{RGB}{188,189,34}
			\definecolor{gray127}{RGB}{127,127,127}
			\definecolor{lightgray204}{RGB}{204,204,204}
			\definecolor{mediumpurple148103189}{RGB}{148,103,189}
			\definecolor{orchid227119194}{RGB}{227,119,194}
			\definecolor{sienna1408675}{RGB}{140,86,75}
			\definecolor{steelblue31119180}{RGB}{31,119,180}
			
			\begin{axis}[
				name=mainplot,
				log basis x={2},
				log basis y={10},
				tick align=outside,
				tick pos=left,
				x grid style={darkgray176},
				xlabel={N},
				xmajorgrids,
				xmin = 4,
				xmax = 64,
				xmode=log,
				xtick style={color=black},
				xtick={4,8,16,32,64},
				y grid style={darkgray176},
				ymajorgrids,
				ymin=2e-03, ymax=1e+05,
				ymode=log,
				ytick style={color=black},
				minor ytick=8
				]
				\addplot [thick, forestgreen4416044]
				table {%
					4	103.45922106954893
					8	12.225586678322582
					16	0.6063026051244889
					32	0.05638108584440866
					64	0.002866517504522458
					
				};
				\addplot [thick, forestgreen4416044, dash pattern=on 1pt off 3pt on 3pt off 3pt]
				table {%
					4	257.095979431302
					8	37.688956945007305
					16	4.416839126884497
					32	0.19827352544280355
					64	0.025898017929060328
					
				};
				\addplot [thick, mediumpurple148103189]
				table {%
					4	103.45822554385192
					8	12.219900476145071
					16	1.9110836320453857
					32	0.0565352903146271
					64	0.012073720673072344
					
				};
				\addplot [thick, mediumpurple148103189, dash pattern=on 1pt off 3pt on 3pt off 3pt]
				table {%
					4	1611.6654822785044
					8	120.21497133181936
					16	40.76174114017578
					32	0.6513931210520866
					64	0.2763242450208421
					
				};
				\addplot [thick, darkorange25512714]
				table {%
					4	103.45166725803682
					8	12.172571460252614
					16	18.23355212364681
					32	0.1900001987951612
					64	0.12279076426281461
					
				};
				\addplot [thick, darkorange25512714, dash pattern=on 1pt off 3pt on 3pt off 3pt]
				table {%
					4	15991.782673798056
					8	1148.6253394208866
					16	407.3090860204666
					32	6.548531263659993
					64	2.7933040698345115
					
				};
				
			\end{axis}	
		\end{tikzpicture}
		\caption{$i=10$}
	\end{subfigure}
	\vspace{0.5cm}
	\begin{subfigure}[b]{0.48\textwidth}
		\begin{tikzpicture}[scale = 0.8]
			
			\definecolor{crimson2143940}{RGB}{214,39,40}
			\definecolor{darkgray176}{RGB}{176,176,176}
			\definecolor{darkorange25512714}{RGB}{255,127,14}
			\definecolor{darkturquoise23190207}{RGB}{23,190,207}
			\definecolor{forestgreen4416044}{RGB}{44,160,44}
			\definecolor{goldenrod18818934}{RGB}{188,189,34}
			\definecolor{gray127}{RGB}{127,127,127}
			\definecolor{lightgray204}{RGB}{204,204,204}
			\definecolor{mediumpurple148103189}{RGB}{148,103,189}
			\definecolor{orchid227119194}{RGB}{227,119,194}
			\definecolor{sienna1408675}{RGB}{140,86,75}
			\definecolor{steelblue31119180}{RGB}{31,119,180}
			
			\begin{axis}[
				name=mainplot,
				log basis x={2},
				log basis y={10},
				minor y tick num=8,
				yminorgrids=true,
				tick align=outside,
				tick pos=left,
				x grid style={darkgray176},
				xlabel={N},
				xmajorgrids,
				xmin = 4,
				xmax = 64,
				xmode=log,
				xtick style={color=black},
				xtick={4,8,16,32,64},
				y grid style={darkgray176},
				ymajorgrids,
				ymin=1e-03, ymax=1e+05,
				ymode=log,
				ytick style={color=black},
				]
				\addplot [thick, forestgreen4416044]
				table {%
					4	103.45936035425878
					8	12.226228014506079
					16	0.5786314866535172
					32	0.05668870485955504
					64	0.002854099535535819
					
				};
				\addplot [thick, forestgreen4416044, dash pattern=on 1pt off 3pt on 3pt off 3pt]
				table {%
					4	257.1029723253309
					8	37.71774989098306
					16	1.873578504577052
					32	0.20536927328110297
					64	0.00960680615755121
					
				};
				\addplot [thick, mediumpurple148103189]
				table {%
					4	103.45958528497754
					8	12.226214972014427
					16	0.5786520134366597
					32	0.05668760607538639
					64	0.0028568173897607053
					
				};
				\addplot [thick, mediumpurple148103189, dash pattern=on 1pt off 3pt on 3pt off 3pt]
				table {%
					4	1611.675474698849
					8	120.30041019144089
					16	7.580276494435266
					32	0.5205564968486065
					64	0.03167614676943136
					
				};
				\addplot [thick, darkorange25512714]
				table {%
					4	103.46196149443941
					8	12.226084842166728
					16	0.5802042053483727
					32	0.05669902137559302
					64	0.003338496661402021
					
				};
				\addplot [thick, darkorange25512714, dash pattern=on 1pt off 3pt on 3pt off 3pt]
				table {%
					4	15991.781030081624
					8	1148.66466493972
					16	73.84031730297951
					32	4.811670995744008
					64	0.3037492318149506
					
				};
				
			\end{axis}		
		\end{tikzpicture}
		\caption{$i=15$}
	\end{subfigure}
	\hfill
	\begin{subfigure}[b]{0.48\textwidth}
		\begin{tikzpicture}[scale = 0.8]
			
			\definecolor{crimson2143940}{RGB}{214,39,40}
			\definecolor{darkgray176}{RGB}{176,176,176}
			\definecolor{darkorange25512714}{RGB}{255,127,14}
			\definecolor{darkturquoise23190207}{RGB}{23,190,207}
			\definecolor{forestgreen4416044}{RGB}{44,160,44}
			\definecolor{goldenrod18818934}{RGB}{188,189,34}
			\definecolor{gray127}{RGB}{127,127,127}
			\definecolor{lightgray204}{RGB}{204,204,204}
			\definecolor{mediumpurple148103189}{RGB}{148,103,189}
			\definecolor{orchid227119194}{RGB}{227,119,194}
			\definecolor{sienna1408675}{RGB}{140,86,75}
			\definecolor{steelblue31119180}{RGB}{31,119,180}
			
			\begin{axis}[
				name=mainplot,
				log basis x={2},
				log basis y={10},
				minor y tick num=8,
				yminorgrids=true,
				tick align=outside,
				tick pos=left,
				x grid style={darkgray176},
				xlabel={N},
				xmajorgrids,
				xmin = 4,
				xmax = 64,
				xmode=log,
				xtick style={color=black},
				xtick={4,8,16,32,64},
				y grid style={darkgray176},
				ymajorgrids,
				ymin=1e-03, ymax=1e+05,
				ymode=log,
				ytick style={color=black},
				]
				\addplot [thick, forestgreen4416044]
				table {%
					4	103.45936047282197
					8	12.226229292695235
					16	0.5786307222084194
					32	0.056688854169150725
					64	0.002854408629623222
					
				};
				\addplot [thick, forestgreen4416044, dash pattern=on 1pt off 3pt on 3pt off 3pt]
				table {%
					4	257.10297870808773
					8	37.717811316485495
					16	1.8735254944006767
					32	0.20537440218557979
					64	0.009621988600080155
					
				};
				\addplot [thick, mediumpurple148103189]
				table {%
					4	103.4595864705444
					8	12.226227744211798
					16	0.5786307643449078
					32	0.05668909640781493
					64	0.002859763487011093
					
				};
				\addplot [thick, mediumpurple148103189, dash pattern=on 1pt off 3pt on 3pt off 3pt]
				table {%
					4	1611.6754838714353
					8	120.30060259237587
					16	7.579648730933958
					32	0.5205782940687939
					64	0.031793863544917464
					
				};
				\addplot [thick, darkorange25512714]
				table {%
					4	103.46197334779151
					8	12.226212473632987
					16	0.5786334127297473
					32	0.05671364341619817
					64	0.003351667158345421
					
				};
				\addplot [thick, darkorange25512714, dash pattern=on 1pt off 3pt on 3pt off 3pt]
				table {%
					4	15991.781039218358
					8	1148.6648658667373
					16	73.83457703732246
					32	4.811701630338171
					64	0.3044556705595459
					
				};
				
			\end{axis}		
		\end{tikzpicture}
		\caption{$i=20$}
	\end{subfigure}	
	
	\begin{tikzpicture}[scale = 0.75]
		\definecolor{crimson2143940}{RGB}{214,39,40}
\definecolor{darkgray176}{RGB}{176,176,176}
\definecolor{darkorange25512714}{RGB}{255,127,14}
\definecolor{darkturquoise23190207}{RGB}{23,190,207}
\definecolor{forestgreen4416044}{RGB}{44,160,44}
\definecolor{goldenrod18818934}{RGB}{188,189,34}
\definecolor{gray127}{RGB}{127,127,127}
\definecolor{lightgray204}{RGB}{204,204,204}
\definecolor{mediumpurple148103189}{RGB}{148,103,189}
\definecolor{orchid227119194}{RGB}{227,119,194}
\definecolor{sienna1408675}{RGB}{140,86,75}
\definecolor{steelblue31119180}{RGB}{31,119,180}
\begin{axis}[
	hide axis,
	xmin=0, xmax=1,
	ymin=0, ymax=1,
	width=0pt,
	height=0pt,
	scale only axis,
	legend columns=2,
	legend style={
		at={(0.5,1)},
		anchor=north,
		draw=lightgray204,
		font=\small,
		row sep=3pt,
		column sep=1cm
	}
	]
	
	\addplot[black, thick] coordinates {(0,0)};
	\addlegendentry{$\|\nabla(u - u_{h,i})\|_{L^2(\Omega)}$}
	
	\addplot[black, dash pattern=on 1pt off 3pt on 3pt off 3pt, thick] coordinates {(0,0)};
	\addlegendentry{$\|p - p_{h,i}\|_{L^2(\Omega)}$}
	
	\addplot[forestgreen4416044, thick] coordinates {(0,0)};
	\addlegendentry{Ra = $10^{11}$}
	
	\addplot[mediumpurple148103189, thick] coordinates {(0,0)};
	\addlegendentry{Ra = $10^{12}$}
	
	\addplot[darkorange25512714, thick] coordinates {(0,0)};
	\addlegendentry{Ra = $10^{13}$}
	
\end{axis}
	\end{tikzpicture}
	
	\caption{	
		For different values $\Ra\in\{10^{11}, 10^{12}, 10^{13}\}$ plot of the error norms for IPM discrete solutions $(u_{h,i},p_{h,i})$ to \eqref{eq:ipm} after $i\in\{5, 10, 15, 20\}$ iterations on the modified mesh $\tilde{\tria}_h$, respectively, with $k=4$, $g_h=\tilde{I}_h g$, $\mathrm{tol}=10^{-11}$, $\rho=10^2$ and manufactured solution $(u,p)$ as in \eqref{eq:exact_sol} to the Stokes problem \eqref{eq:Stokes}.
		Note that for $i=15$ and $i = 20$ lines are not visible because they lie on top of each other.}
	\label{fig:IPM-pressure_robustness} 
\end{figure}

Furthermore, we investigate $\divergence u_{h,i}$ locally depending on the mesh modification. 
In Figure~\ref{fig:IPM-mesh}~(\subref{fig:IPM-mesh-nomod}) and (\subref{fig:IPM-mesh-full}) we compare the divergence of the velocity iterates for the IPM after $17$ iterations for the original mesh $\tria_h$ (left) and modified mesh $\tilde{\tria}_h$ (right) for $N=16$, plotted over the domain. 
One can see that on the original mesh the divergence is large mainly in the corners and at nearly singular vertices, see Figure~\ref{fig:IPM-mesh}~(\subref{fig:IPM-mesh-nomod}).
Moreover, $\norm{\divergence{u_{h,17}}}_{\ell^\infty}$ is  several orders of magnitude smaller for the modified mesh $\tilde{\tria}_h$,  see Figure~\ref{fig:IPM-mesh}~(\subref{fig:IPM-mesh-full}) than on the original one. 
The discrete norm $\norm{\cdot}_{\ell^\infty}$ is computed as the maximal absolute value in every dof of the velocity divergence projected onto the space of discontinuous piecewise polynomials of degree $k-1$.

\subsubsection{Pressure robustness}
Finally, we visualise the asymptotic pressure robustness property of the iterated penalty method for the Scott--Vogelius element. 
As in Section~\ref{sec:press-rob}, we follow~\cite[Example 1.1]{John2017} and we consider $\Ra\in\{10^{11}, 10^{12}, 10^{13}\}$. 
Recall that the given function $F$ in~\eqref{eq:ipm} depends on this parameter. 
Figure~\ref{fig:IPM-pressure_robustness} shows the error norms after $5,10, 15$ and $20$ iterations, respectively.

We observe that the scaling of the velocity error with $\Ra$ decreases with increasing number of iterations. For $i=15$ and $i=20$ iterations its qualitative behaviour is similar to the error of the Scott--Vogelius solution, see Fig.~\ref{fig:SV_TH_direct_pressure_robustness_loglog}.

\subsection*{Acknowledgments.} 
We owe thanks to Charles Parker for useful discussions and to Leo Rebholz for valuable suggestions. We also thank the referees for their helpful comments.

The work by T.T.~was supported by  the German Research Foundation (DFG) via grant TRR 154, subproject C09, project number 239904186 and by the Hans and Ria Messer Foundation. 

\FloatBarrier

\printbibliography

@misc{zenodo,
    author = {Eickmann, Franziska and Scott, L. Ridgway and Tscherpel, Tabea},
    title = {{Software used in 'Scott-Vogelius element and iterated penalty method for inhomogeneous Dirichlet boundary conditions'}},
    year = {2026},
    doi = {10.5281/zenodo.20798996},
    Howpublished = {Zenodo},
    url = {https://doi.org/10.5281/zenodo.20798996},
}

@Article{Kreuzer2021,
  author   = {Kreuzer, Christian and Verf\"{u}rth, R\"{u}diger and Zanotti, Pietro},
  journal  = {Comput. Methods Appl. Math.},
  title    = {Quasi-optimal and pressure robust discretizations of the {S}tokes equations by moment- and divergence-preserving operators},
  year     = {2021},
  number   = {2},
  pages    = {423--443},
  volume   = {21},
  doi      = {10.1515/cmam-2020-0023},
  fjournal = {Computational Methods in Applied Mathematics},
}

@Article{Kreuzer2020,
  author   = {Kreuzer, Christian and Zanotti, Pietro},
  journal  = {IMA J. Numer. Anal.},
  title    = {Quasi-optimal and pressure-robust discretizations of the {S}tokes equations by new augmented {L}agrangian formulations},
  year     = {2020},
  number   = {4},
  pages    = {2553--2583},
  volume   = {40},
  doi      = {10.1093/imanum/drz044},
  fjournal = {IMA Journal of Numerical Analysis},
}

@Article{John2017,
  author   = {John, V. and Linke, A. and Merdon, C. and Neilan, M. and Rebholz, L. G.},
  journal  = {SIAM Rev.},
  title    = {On the divergence constraint in mixed finite element methods for incompressible flows},
  year     = {2017},
  number   = {3},
  pages    = {492--544},
  volume   = {59},
  doi      = {10.1137/15M1047696},
  fjournal = {SIAM Review},
}

@Book{Boffi2013,
  author    = {Boffi, D. and Brezzi, F. and Fortin, M.},
  publisher = {Springer, Heidelberg},
  title     = {Mixed Finite Element Methods and Applications},
  year      = {2013},
  series    = {Springer Series in Computational Mathematics},
  volume    = {44},
  doi       = {10.1007/978-3-642-36519-5},
}

@Book{Brenner2008,
  author    = {Brenner, S. and Scott, L. R.},
  publisher = {Springer},
  title     = {The Mathematical Theory of Finite Element Methods},
  year      = {2008},
  edition   = {3},
  series    = {Texts in Applied Mathematics},
  volume    = {15},
  doi       = {10.1007/978-0-387-75934-0},
}

@Article{Ainsworth2022,
  author   = {Mark Ainsworth and Charles Parker},
  journal  = {Computer Methods in Applied Mechanics and Engineering},
  title    = {Unlocking the secrets of locking: Finite element analysis in planar linear elasticity},
  year     = {2022},
  pages    = {115034},
  volume   = {395},
  doi      = {10.1016/j.cma.2022.115034},
}

@Article{Ainsworth2023,
  author   = {Ainsworth, Mark and Parker, Charles},
  journal  = {SIAM J. Sci. Comput.},
  title    = {Statically {C}ondensed {I}terated {P}enalty {M}ethod for {H}igh {O}rder {F}inite {E}lement {D}iscretizations of {I}ncompressible {F}low},
  year     = {2023},
  number   = {6},
  pages    = {A3095--A3120},
  volume   = {45},
  doi      = {10.1137/22M1541368},
  fjournal = {SIAM Journal on Scientific Computing},
}

@Book{Fortin1983,
  author    = {Fortin, Michel and Glowinski, Roland},
  publisher = {North-Holland Publishing Co., Amsterdam},
  title     = {Augmented {L}agrangian methods},
  year      = {1983},
  series    = {Studies in Mathematics and its Applications},
  volume    = {15},
}

@Article{Zhang2011a,
  author    = {Zhang, S.},
  journal   = {Math. Comp.},
  title     = {Divergence-free finite elements on tetrahedral grids for {$k\geq 6$}},
  year      = {2011},
  number    = {274},
  pages     = {669--695},
  volume    = {80},
  doi       = {10.1090/S0025-5718-2010-02412-3},
  fjournal  = {Mathematics of Computation},
}

@Article{Chen2015,
  author     = {Chen, Puyin and Huang, Jianguo and Sheng, Huashan},
  journal    = {J. Comput. Appl. Math.},
  title      = {Some {U}zawa methods for steady incompressible {N}avier-{S}tokes equations discretized by mixed element methods},
  year       = {2015},
  pages      = {313--325},
  volume     = {273},
  doi        = {10.1016/j.cam.2014.06.019},
  fjournal   = {Journal of Computational and Applied Mathematics},
}

@Article{Nochetto2004,
  author   = {Nochetto, Ricardo H. and Pyo, Jae-Hong},
  journal  = {Numer. Math.},
  title    = {Optimal relaxation parameter for the {U}zawa method},
  year     = {2004},
  number   = {4},
  pages    = {695--702},
  volume   = {98},
  doi      = {10.1007/s00211-004-0522-0},
  fjournal = {Numerische Mathematik},
}

@Article{Geredeli2023,
  author  = {Pelin G. Geredeli and Leo G. Rebholz and Duygu Vargun and Ahmed Zytoon},
  journal = {Journal of Computational and Applied Mathematics},
  title   = {Improved convergence of the Arrow–Hurwicz iteration for the Navier–Stokes equation via grad–div stabilization and Anderson acceleration},
  year    = {2023},
  pages   = {114920},
  volume  = {422},
  doi     = {10.1016/j.cam.2022.114920},
}

@Book{Girault1986,
  author     = {Girault, Vivette and Raviart, Pierre-Arnaud},
  publisher  = {Springer-Verlag, Berlin},
  title      = {Finite element methods for {N}avier-{S}tokes equations},
  year       = {1986},
  series     = {Springer Series in Computational Mathematics},
  volume     = {5},
  doi        = {10.1007/978-3-642-61623-5},
  pages      = {x+374},
}

@Article{Farrell2024,
  author  = {Farrell, Patrick E. and Mitchell, Lawrence and Scott, L. Ridgway},
  journal = {SIAM Journal on Scientific Computing},
  title   = {Two Conjectures on the Stokes Complex in Three Dimensions on Freudenthal Meshes},
  year    = {2024},
  number  = {2},
  volume  = {46},
  doi     = {10.1137/22M1533943},
}

@Article{ScottZhang1990,
  author  = {Scott, L.~R. and Zhang, S.},
  journal = {Mathematics of Computation},
  title   = {Finite element interpolation of nonsmooth functions satisfying boundary conditions},
  year    = {1990},
  month   = {04},
  number  = {190},
  pages   = {483--493},
  volume  = {54},
  doi     = {10.2307/2008497},
}

@Book{Ern2021b,
  author    = {Ern, A. and Guermond, J.-L.},
  publisher = {Springer, Cham},
  title     = {Finite elements. {II}},
  year      = {2021},
  series    = {Texts in Applied Mathematics},
  volume    = {73},
  doi       = {10.1007/978-3-030-56923-5},
  pages     = {492},
}

@Article{BelenkiBerselliDieningEtAl2012,
  author    = {Belenki, L. and Berselli, L.~C. and Diening, L. and R{\r{u}}{\v{z}}i{\v{c}}ka, M.},
  journal   = {SIAM Journal on Numerical Analysis},
  title     = {On the Finite Element Approximation of $p$-{S}tokes Systems},
  year      = {2012},
  number    = {2},
  pages     = {373--397},
  volume    = {50},
}

@Book{Galdi2011,
  author    = {Galdi, G.~P.},
  publisher = {Springer Verlag New York},
  title     = {An Introduction to the Mathematical Theory of the {N}avier{-S}tokes Equations - Steady-State Problems},
  year      = {2011},
  edition   = {2},
  series    = {Springer Monographs in Mathematics},
  doi       = {10.1007/978-0-387-09620-9},
}

@Article{DieningRuzickaSchumacher2010,
  author   = {Diening, L. and R\r{u}\v{z}i\v{c}ka, M. and Schumacher, K.},
  journal  = {Ann. Acad. Sci. Fenn. Math.},
  title    = {A decomposition technique for {J}ohn domains},
  year     = {2010},
  number   = {1},
  pages    = {87--114},
  volume   = {35},
  doi      = {10.5186/aasfm.2010.3506},
  fjournal = {Annales Academi\ae\ Scientiarum Fennic\ae. Mathematica},
}

@Book{Ciarlet2002,
  author    = {Ciarlet, Philippe G.},
  publisher = {Society for Industrial and Applied Mathematics},
  title     = {The Finite Element Method for Elliptic Problems},
  year      = {2002},
  doi       = {10.1137/1.9780898719208},
}

@PhdThesis{Tscherpel2018,
  author    = {Tscherpel, T.},
  school    = {University of Oxford},
  title     = {{F}inite {E}lement {A}pproximation for the {U}nsteady {F}low of {I}mplicitly {C}onstituted {I}ncompressible {F}luids},
  year      = {2018},
  type      = {{D}.{P}hil. {T}hesis},
}

@Book{Elman2014,
  author    = {Elman, Howard C and Silvester, David J and Wathen, Andrew J},
  publisher = {Oxford University Press},
  title     = {Finite elements and fast iterative solvers: with applications in incompressible fluid dynamics},
  year      = {2014},
  doi       = {10.1093/acprof:oso/9780199678792.001.0001},
}

@Article{GraessleBohneSauter2024,
  author   = {Gr\"a\ss le, Benedikt and Bohne, Nis-Erik and Sauter, Stefan},
  journal  = {Numer. Math.},
  title    = {The pressure-wired {S}tokes element: a mesh-robust version of the {S}cott--{V}ogelius element},
  year     = {2024},
  number   = {5},
  pages    = {1781--1807},
  volume   = {156},
  doi      = {10.1007/s00211-024-01430-x},
  fjournal = {Numerische Mathematik},
}

@Article{Park2020,
  author     = {Park, Chunjae},
  journal    = {J. Comput. Appl. Math.},
  title      = {Spurious pressure in {S}cott-{V}ogelius elements},
  year       = {2020},
  pages      = {370--391},
  volume     = {363},
  doi        = {10.1016/j.cam.2019.06.007},
  fjournal   = {Journal of Computational and Applied Mathematics},
  }

@Misc{EickmannTscherpel2025,
  author   = {Eickmann, Franziska and Tscherpel, Tabea},
  title    = {Fortin operators with trace preservation properties},
  pubstate = {inpreparation},
}

@Misc{EickmannGuzmanetal2025,
  author        = {Franziska Eickmann and Johnny Guzmán and Michael Neilan and L. Ridgway Scott and Tabea Tscherpel},
  title         = {A local Fortin projection for the Scott-Vogelius elements on general meshes},
  year          = {2025},
  archiveprefix = {arXiv},
  eprint        = {2512.18033},
  url           = {https://arxiv.org/abs/2512.18033},
}

@Misc{ParkerSüli2025,
  author        = {Charles Parker and Endre Süli},
  title         = {Stability of high-order Scott-Vogelius elements for 2D non-Newtonian incompressible flow},
  year          = {2025},
  archiveprefix = {arXiv},
  eprint        = {2509.19488},
  url           = {https://arxiv.org/abs/2509.19488},
}

@Book{AdamsFournier2003,
  author    = {Adams, R.~A. and Fournier, J.~J.~F.},
  publisher = {Elsevier Science Ltd.},
  title     = {{Sobolev Spaces}},
  year      = {2003},
  edition   = {2},
  series    = {Pure and Applied Mathematics},
  volume    = {140},
}

@Book{Mazya2011,
  author    = {Maz'ya, V.},
  publisher = {Springer},
  title     = {{Sobolev Spaces with Applications to Elliptic Partial Differential Equations}},
  year      = {2011},
  edition   = {2},
  series    = {Grundlehren der mathematischen Wissenschaften},
  volume    = {342},
  doi       = {10.1007/978-3-642-15564-2},
}

@Article{FuGuzmanNeilan2020,
  author   = {Fu, Guosheng and Guzm\'{a}n, Johnny and Neilan, Michael},
  journal  = {Math. Comp.},
  title    = {Exact smooth piecewise polynomial sequences on {A}lfeld splits},
  year     = {2020},
  number   = {323},
  pages    = {1059--1091},
  volume   = {89},
  doi      = {10.1090/mcom/3520},
  fjournal = {Mathematics of Computation},
}

@Article{BernardiRaugel1985,
  author    = {Bernardi, C. and Raugel, G.},
  journal   = {Math. Comp.},
  title     = {Analysis of some finite elements for the {S}tokes problem},
  year      = {1985},
  number    = {169},
  pages     = {71--79},
  volume    = {44},
  doi       = {10.2307/2007793},
  fjournal  = {Mathematics of Computation},
}

@Article{CrouzeixRaviart1973,
  author   = {Crouzeix, M. and Raviart, P.-A.},
  journal  = {Rev. Fran\c{c}aise Automat. Informat. Recherche Op\'{e}rationnelle S\'{e}r. Rouge},
  title    = {Conforming and nonconforming finite element methods for solving the stationary {S}tokes equations {I}},
  year     = {1973},
  number   = {{\rm R}-3},
  pages    = {33--75},
  volume   = {7},
  doi      = {10.1051/m2an/197307r300331},
  fjournal = {Revue Fran\c{c}aise d'Automatique, Informatique et Recherche Op\'{e}rationnelle S\'{e}rie Rouge},
}

@Article{Mansfield1982,
  author    = {Mansfield, L.},
  journal   = {ESAIM: Mathematical Modelling and Numerical Analysis - Mod\'{e}lisation Math\'{e}matique et Analyse Num\'{e}rique},
  title     = {Finite element subspaces with optimal rates of convergence for the stationary {S}tokes problem},
  year      = {1982},
  number    = {1},
  pages     = {49--66},
  volume    = {16},
  doi       = {10.1051/m2an/1982160100491},
  publisher = {Dunod},
}

@Article{GiraultScott2003,
  author    = {Girault, V. and Scott, L.~R.},
  journal   = {Calcolo},
  title     = {A quasi-local interpolation operator preserving the discrete divergence},
  year      = {2003},
  number    = {1},
  pages     = {1--19},
  volume    = {40},
  doi       = {10.1007/s100920300000},
  fjournal  = {Calcolo. A Quarterly on Numerical Analysis and Theory of Computation},
}

@Article{ArnoldBrezziFortin1984,
  author    = {Arnold, D.~N. and Brezzi, F. and Fortin, M.},
  journal   = {Calcolo},
  title     = {A stable finite element for the {S}tokes equations},
  year      = {1984},
  number    = {4},
  pages     = {337--344 (1985)},
  volume    = {21},
  doi       = {10.1007/BF02576171},
  fjournal  = {Calcolo. A Quarterly on Numerical Analysis and Theory of Computation},
}

@Article{TaylorHood1973,
  author    = {Taylor, C. and Hood, P.},
  journal   = {Internat. J. Comput. \& Fluids},
  title     = {A numerical solution of the {N}avier-{S}tokes equations using the finite element technique},
  year      = {1973},
  pages     = {73--100},
  volume    = {1},
  doi       = {10.1016/0045-7930(73)90027-3},
  fjournal  = {Computers \& Fluids. An International Journal},
}

@Unpublished{Scott2021,
  author       = {Scott, L. R.},
  title        = {A Local Fortin Operator for Low-order {T}aylor--{H}ood Elements},
  year         = {2021},
  date         = {17 May, 2021},
  howpublished = {Technical Report UC/CS TR-2021-07, Dept. Comp. Sci., Univ. Chicago},
}

@Article{DieningStornTscherpel2021,
  author    = {Diening, L. and Storn, J. and Tscherpel, T.},
  journal   = {Numerische Mathematik},
  title     = {Fortin operator for the Taylor–Hood element},
  year      = {2021},
  month     = dec,
  number    = {2},
  pages     = {671–689},
  volume    = {150},
  doi       = {10.1007/s00211-021-01260-1},
  fjournal  = {Numerische Mathematik},
  publisher = {Springer Science and Business Media LLC},
}

@Article{GuzmanNeilan2014,
  author    = {Guzm\'{a}n, J. and Neilan, M.},
  journal   = {IMA Journal of Numerical Analysis},
  title     = {Conforming and divergence-free {S}tokes elements in three dimensions},
  year      = {2014},
  number    = {4},
  pages     = {1489--1508},
  volume    = {34},
  doi       = {10.1093/imanum/drt053},
}

@Article{GuzmanNeilan2014a,
  author  = {Guzm\'an, J. and Neilan, M.},
  journal = {Math. Comp.},
  title   = {Conforming and divergence-free {S}tokes elements on general triangular meshes},
  year    = {2014},
  pages   = {15--36},
  volume  = {83},
  doi     = {10.1090/s0025-5718-2013-02753-6},
}

@article{GuzmanNeilan2018,
  	author  = {Guzm\'an, J. and Neilan, M.},
	title = {Inf-sup stable finite elements on barycentric refinements producing divergence--free approximations in arbitrary dimensions},
	journal = {SIAM Journal on Numerical Analysis},
	volume = {56},
	number = {5},
	pages = {2826-2844},
	year = {2018},
	doi = {10.1137/17M1153467},
}

@Article{GuzmanScott2019,
  author   = {Guzm\'{a}n, J. and Scott, L.~R.},
  journal  = {Math. Comp.},
  title    = {The {S}cott-{V}ogelius finite elements revisited},
  year     = {2019},
  number   = {316},
  pages    = {515--529},
  volume   = {88},
  doi      = {10.1090/mcom/3346},
  fjournal = {Mathematics of Computation},
}

@Article{ScottVogelius1985,
  author    = {Scott, L.~R. and Vogelius, M.},
  journal   = {RAIRO Mod\'el. Math. Anal. Num\'er.},
  title     = {Norm estimates for a maximal right inverse of the divergence operator in spaces of piecewise polynomials},
  year      = {1985},
  number    = {1},
  pages     = {111--143},
  volume    = {19},
  doi       = {10.1051/m2an/1985190101111},
  fjournal  = {RAIRO Mod\'elisation Math\'ematique et Analyse Num\'erique},
}

@Article{ArnoldQin1992,
  author  = {Arnold, Douglas N. and Qin, Jinshui},
  journal = {Advances in computer methods for partial differential equations},
  title   = {Quadratic velocity/linear pressure {Stokes} elements},
  year    = {1992},
  pages   = {28--34},
  volume  = {7},
}

@Article{Zhang2005,
  author    = {Zhang, S.},
  journal   = {Math. Comp.},
  title     = {A new family of stable mixed finite elements for the 3{D} {S}tokes equations},
  year      = {2005},
  number    = {250},
  pages     = {543--554},
  volume    = {74},
  doi       = {10.1090/S0025-5718-04-01711-9},
  fjournal  = {Mathematics of Computation},
}

@Article{FarrellMitchellScottEtAl2022,
  author   = {Farrell, Patrick E. and Mitchell, Lawrence and Scott, L. Ridgway and Wechsung, Florian},
  journal  = {IMA J. Numer. Anal.},
  title    = {Robust multigrid methods for nearly incompressible elasticity using macro elements},
  year     = {2022},
  number   = {4},
  pages    = {3306--3329},
  volume   = {42},
  doi      = {10.1093/imanum/drab083},
  fjournal = {IMA Journal of Numerical Analysis},
 }

@Article{HeisterRebholzXiao2016,
  author    = {Heister, Timo and Rebholz, Leo G and Xiao, Mengying},
  journal   = {Journal of Mathematical Analysis and Applications},
  title     = {Flux-preserving enforcement of inhomogeneous {D}irichlet boundary conditions for strongly divergence-free mixed finite element methods for flow problems},
  year      = {2016},
  number    = {1},
  pages     = {507--513},
  volume    = {438},
  doi       = {10.1016/j.jmaa.2016.01.075},
  fjournal  = {Journal of Mathematical Analysis and Applications},
  publisher = {Elsevier},
}

@Article{ScottTscherpel2024,
  author   = {Scott, L. Ridgway and Tscherpel, Tabea},
  journal  = {SIAM J. Sci. Comput.},
  title    = {Dimensions of {E}xactly {D}ivergence-{F}ree {F}inite {E}lement {S}paces in 3{D}},
  year     = {2024},
  number   = {2},
  pages    = {A1102--A1131},
  volume   = {46},
  doi      = {10.1137/22M1544579},
  fjournal = {SIAM Journal on Scientific Computing},
}

@Article{FalkNeilan2013,
  author    = {Falk, R. S. and Neilan, M.},
  journal   = {SIAM J. Numer. Anal.},
  title     = {Stokes complexes and the construction of stable finite elements with pointwise mass conservation},
  year      = {2013},
  number    = {2},
  pages     = {1308--1326},
  volume    = {51},
  doi       = {10.1137/120888132},
  fjournal  = {SIAM Journal on Numerical Analysis},
}

@Article{Fortin1977,
  author       = {Fortin, Michel},
  journal      = {RAIRO Anal. Num\'er.},
  title        = {An analysis of the convergence of mixed finite element methods},
  year         = {1977},
  number       = {4},
  pages        = {341--354, iii},
  volume       = {11},
  date         = {1977},
  doi          = {10.1051/m2an/1977110403411},
  fjournal     = {RAIRO Analyse Num\'erique},
  journaltitle = {RAIRO Anal. Num\'er.},
}

@Article{Zhang2011b,
  author           = {Zhang, Shangyou},
  journal          = {Calcolo},
  title            = {Quadratic divergence-free finite elements on Powell–Sabin tetrahedral grids},
  year             = {2011},
  month            = {09},
  pages            = {211-244},
  volume           = {48},
  doi              = {10.1007/s10092-010-0035-4},
}

@Article{Fabien2022,
  author   = {Fabien, M. and Guzm\'{a}n, J. and Neilan, M. and Zytoon, A.},
  journal  = {Comput. Methods Appl. Mech. Engrg.},
  title    = {Low-order divergence-free approximations for the {S}tokes problem on {W}orsey-{F}arin and {P}owell-{S}abin splits},
  year     = {2022},
  pages    = {Paper No. 114444, 21},
  volume   = {390},
  doi      = {10.1016/j.cma.2021.114444},
  fjournal = {Computer Methods in Applied Mechanics and Engineering},
}

@Article{Neilan2020,
  author  = {Neilan, Michael},
  journal = {75 years of Mathematics of Computation},
  title   = {The Stokes complex: A review of exactly divergence-free finite element pairs for incompressible flows.},
  year    = {2020},
  pages   = {141--158},
  volume  = {754},
  doi     = {10.1090/conm/754/15142},
}

@Article{DieningMalekSteinhauer2008,
  author    = {Diening, L. and M\'alek, J. and Steinhauer, M.},
  journal   = {ESAIM: Control, Optimisation and Calculus of Variations},
  title     = {On {L}ipschitz truncations of {S}obolev functions (with variable exponent) and their selected applications},
  year      = {2008},
  month     = {3},
  number    = {2},
  pages     = {211--232},
  volume    = {14},
  doi       = {10.1051/cocv:2007049},
  publisher = {EDP Sciences},
}

@Article{JessbergerKaltenbach2024,
  author     = {Je\ss berger, Julius and Kaltenbach, Alex},
  journal    = {SIAM J. Numer. Anal.},
  title      = {Finite element discretization of the steady, generalized {N}avier-{S}tokes equations with inhomogeneous {D}irichlet boundary conditions},
  year       = {2024},
  number     = {4},
  pages      = {1660--1686},
  volume     = {62},
  doi        = {10.1137/23M1607398},
  fjournal   = {SIAM Journal on Numerical Analysis},
}

@Article{PowellSabin1977,
  author   = {Powell, M. J. D. and Sabin, M. A.},
  journal  = {ACM Trans. Math. Software},
  title    = {Piecewise quadratic approximations on triangles},
  year     = {1977},
  number   = {4},
  pages    = {316--325},
  volume   = {3},
  doi      = {10.1145/355759.355761},
  fjournal = {Association for Computing Machinery. Transactions on Mathematical Software},
}

@Article{GuzmanLischkeNeilan2020,
  author   = {Guzm\'{a}n, J. and Lischke, A. and Neilan, M.},
  journal  = {Calcolo},
  title    = {Exact sequences on {P}owell-{S}abin splits},
  year     = {2020},
  number   = {2},
  pages    = {Paper No. 13, 25},
  volume   = {57},
  doi      = {10.1007/s10092-020-00361-x},
  fjournal = {Calcolo. A Quarterly on Numerical Analysis and Theory of Computation},
}

@Article{WorseyFarin1987,
  author    = {Worsey, A. J. and Farin, G.},
  journal   = {Constr. Approx.},
  title     = {An {$n$}-dimensional {C}lough-{T}ocher interpolant},
  year      = {1987},
  number    = {2},
  pages     = {99--110},
  volume    = {3},
  doi       = {10.1007/BF01890556},
  fjournal  = {Constructive Approximation. An International Journal for Approximations and Expansions},
  publisher = {Springer},
}

@Article{Zhang2008,
  author   = {Zhang, S.},
  journal  = {J. Comput. Math.},
  title    = {On the {P}1 {P}owell-{S}abin divergence-free finite element for the {S}tokes equations},
  year     = {2008},
  number   = {3},
  pages    = {456--470},
  volume   = {26},
  fjournal = {Journal of Computational Mathematics},
}

@Article{AlnaesBlechtaHakeEtAl2015,
  author  = {Aln{\ae}s, M. and Blechta, J. and Hake, J. and Johansson, A. and Kehlet, B. and Logg, A. and Richardson, C. and Ring, J. and Rognes, M.~E. and Wells, G.~N.},
  journal = {Archiv of Numerical Software},
  title   = {The FEniCS Project Version 1.5},
  year    = {2015},
  number  = {100},
  volume  = {3},
  doi     = {10.11588/ans.2015.100.20553},
}

@Book{LoggMardalWells2012,
  editor    = {Logg, A. and Mardal, K.-A. and Wells, G.~N.},
  publisher = {Springer, Heidelberg},
  title     = {Automated solution of differential equations by the finite element method},
  year      = {2012},
  note      = {The FEniCS book},
  series    = {Lecture Notes in Computational Science and Engineering},
  volume    = {84},
  doi       = {10.1007/978-3-642-23099-8},
  pages     = {xiv+723},
}

@Article{Hunter2007,
  author    = {Hunter, J. D.},
  journal   = {Computing in Science \& Engineering},
  title     = {Matplotlib: A 2D graphics environment},
  year      = {2007},
  number    = {3},
  pages     = {90--95},
  volume    = {9},
  doi       = {10.1109/MCSE.2007.55},
  publisher = {IEEE COMPUTER SOC},
}

@Article{Amestoy2001,
  author  = {P.R. Amestoy and I. S. Duff and J. Koster and J.-Y. L'Excellent},
  journal = {SIAM Journal on Matrix Analysis and Applications},
  title   = {A Fully Asynchronous Multifrontal Solver Using Distributed Dynamic Scheduling},
  year    = {2001},
  number  = {1},
  pages   = {15-41},
  volume  = {23},
  doi     = {10.1137/s0895479899358194},
}

@Book{Thomasset1981,
  author    = {Thomasset, F},
  publisher = {Springer},
  title     = {Implementation of finite element methods for {N}avier-{S}tokes equations},
  year      = {1981},
  address   = {Berlin, Germany},
  series    = {Scientific Computation},
  doi       = {10.1007/978-3-642-87047-7},
}

@Book{BernardiGiraultRaviartHechtRiviere2024,
  author    = {Bernardi, Christine and Girault, Vivette and Hecht, Frédéric and Raviart, Pierre-Arnaud and Rivière, Beatrice},
  publisher = {SIAM},
  title     = {Mathematics and Finite Element Discretizations of Incompressible Navier—Stokes Flows},
  year      = {2024},
  address   = {Philadelphia, PA},
  doi       = {10.1137/1.9781611978124},
}

@Book{Glowinski1984,
  author    = {Glowinski, R.},
  publisher = {Springer-Verlag},
  title     = {Numerical methods for nonlinear variational problems},
  year      = {1984},
  address   = {New York},
  series    = {Springer series in computational physics},
  doi       = {10.1007/978-3-662-12613-4},
}

@Article{JenkinsJohnLinkeEtAl2014,
  author     = {Jenkins, Eleanor W. and John, Volker and Linke, Alexander and Rebholz, Leo G.},
  journal    = {Adv. Comput. Math.},
  title      = {On the parameter choice in grad-div stabilization for the {S}tokes equations},
  year       = {2014},
  number     = {2},
  pages      = {491--516},
  volume     = {40},
  doi        = {10.1007/s10444-013-9316-1},
  fjournal   = {Advances in Computational Mathematics},
}

@Article{Franca1988,
  author           = {Leopoldo P. Franca and Thomas J.R. Hughes},
  journal          = {Computer Methods in Applied Mechanics and Engineering},
  title            = {Two classes of mixed finite element methods},
  year             = {1988},
  number           = {1},
  pages            = {89-129},
  volume           = {69},
  doi              = {10.1016/0045-7825(88)90168-5},
}

@Article{DieningStornTscherpel2025,
  author         = {Lars Diening and Johannes Storn and Tabea Tscherpel},
  journal        = {Computers \& Mathematics with Applications},
  title          = {Exact integration for singular Zienkiewicz and Guzmán–Neilan finite elements with implementation},
  year           = {2025},
  issn           = {0898-1221},
  pages          = {60-85},
  volume         = {191},
  abstract       = {We develop a recursive integration formula for a class of rational polynomials in 2D. Based on this, we present implementations of finite elements that have rational basis functions. Specifically, we provide simple MATLAB implementations of the singular Zienkiewicz and the lowest-order Guzmán–Neilan finite element in 2D.},
  doi            = {https://doi.org/10.1016/j.camwa.2025.04.019},
  keywords       = {Quadrature, Conforming finite elements, Rational functions, Singular Zienkiewicz element, Guzmán-Neilan element, MATLAB, Structure preservation, Exactly divergence free},
  qualityassured = {qualityAssured},
  url            = {https://www.sciencedirect.com/science/article/pii/S0898122125001634},
}

\end{document}